\def\a{\alpha} 
\def\c{\gamma}
\def\vf{\varphi}
\def\s{\sigma}
\def\D{\Delta}
\newcolumntype{R}{ >{$}r <{$}}
\newcolumntype{C}{ >{$}c <{$}}
\newcolumntype{L}{ >{$}l <{$}}
\newcolumntype{F}{>{\centering\arraybackslash}m{1.5cm}}
\def\LL{\Lambda}
\newcommand{\mc}[1]{\mathcal{#1}}
\newcommand{\comment}[1]{}
\newcommand{\commentapd}[1]{}
\newcommand{\RR}{{\mathbb R}}%Reals
\newcommand{\CC}{{\mathbb C}}%Complex
\newcommand{\ZZ}{{\mathbb Z}}%Integers
\newcommand{\QQ}{{\mathbb Q}}%Rationals
\newcommand{\HH}{{\mathbb H}}%quaternions
\newcommand{\FF}{{\mathbb F}}%finite field
\newcommand{\Aut}{\operatorname{Aut}}
\newcommand{\Disc}{\operatorname{Disc}}
\newcommand{\tr}{\operatorname{{tr}}}
\newcommand{\ex}{\operatorname{e}} %Number theory exp
\newcommand{\Ex}{\operatorname{Ex}}
\newcommand{\wh}{{\rm wh}}	%weakly holomorphic
\newcommand{\wsh}{{\rm wsh}}
\newcommand{\E}{\mathcal{E}}
\newcommand{\W}{\mathcal{W}}
\newcommand{\xmod}{{\rm \;mod\;}}
\newcommand{\PSL}{\operatorname{\textsl{PSL}}}    %PSL group
\newcommand{\SL}{\operatorname{\textsl{SL}}}      %SL group
\newcommand{\mpt}{\widetilde{\SL}_2}      %Metaplectic double cover of \SL_2
\newcommand{\PGL}{\operatorname{\textsl{PGL}}}    %PGL group
\newcommand{\GL}{{\textsl{GL}}}      %GL group
\newcommand{\MM}{\mathbb{M}}	%monster group
\newcommand{\Co}{{\operatorname{\textsl{Co}}}}	%Conway group
\newcommand{\Th}{{\operatorname{\textsl{Th}}}}    %Th group
\newcommand{\pd}{\uplambda}
\newtheorem{thm}{Theorem}[section]
\newtheorem*{thm*}{Theorem}
\newtheorem{con}[thm]{Conjecture}
\theoremstyle{definition}
\theoremstyle{remark}
\numberwithin{equation}{section}
\begin{document}

\setstretch{1.26}

\title{
\vspace{-35pt}
\textsc{\huge{ \hspace{2.7pt}
An Overview of Penumbral Moonshine
}}
\renewcommand{\thefootnote}{\fnsymbol{footnote}} 
\footnotetext{\emph{MSC2020:} 11F22, 11F27, 11F37, 11F50, 20C34.}     
}

%11F11:Holomorphic modular forms of integral weight
%11F22:Relationship to Lie algebras and finite simple groups
%11F27:Theta series; Weil representation; theta correspondences
%11F37:Forms of half-integer weight; nonholomorphic modular forms
%11F46:Siegel modular groups; Siegel and Hilbert-Siegel modular and automorphic forms
%11F50:Jacobi forms
%20C34:Representations of sporadic groups
%20C35:Applications of group representations to physics

\renewcommand{\thefootnote}{\arabic{footnote}} 

\author[1,2]{John F.\ R.\ Duncan\thanks{jduncan@gate.sinica.edu.tw}\thanks{john.duncan@emory.edu}}
\author[3]{Jeffrey A.\ Harvey\thanks{j-harvey@uchicago.edu}}
\author[4]{Brandon C.\ Rayhaun\thanks{brayhaun@stanford.edu}}

\affil[1]{Institute of Mathematics, Academia Sinica, Taipei, Taiwan.}
\affil[2]{Department of Mathematics, Emory University, Atlanta, GA 30322, U.S.A.}
\affil[3]{Enrico Fermi Institute and Department of Physics, University of Chicago, Chicago, IL 60637, U.S.A.}
\affil[4]{Institute for Theoretical Physics, Stanford University, Palo Alto, CA 94305, U.S.A.}

\date{} 

\maketitle

\begin{center}
\emph{Dedicated to the memory of John H.~Conway.}
\end{center}
\vspace{.24in}

\abstract{
As Mathieu moonshine is a special case of umbral moonshine, Thompson moonshine (in half-integral weight) is a special case of a family of similar relationships between 
finite groups and vector-valued modular forms of a certain kind. 
We call this penumbral moonshine.
We introduce and explain some features of this phenomenon in this work. 
}

\clearpage

\tableofcontents

\clearpage

%------------------------------------------------------------------%
\section{Introduction}\label{sec:int}
%------------------------------------------------------------------%

To 
introduce penumbral moonshine, we first recall 
primary features of two of its precedents, monstrous and umbral moonshine. We review the former 
in \S~\ref{sec:int-mns}, and the latter
in \S~\ref{sec:int-umb}. 
We use our review of umbral moonshine to motivate the main question of this work 
in \S~\ref{sec:int-mot},
and then, in \S~\ref{sec:int-thm}, explain how it is that Thompson moonshine, viewed from an appropriate place,
sets us on a path toward an answer.
To
push further along this path we
point out some distinguishing
features of the forms of umbral moonshine in \S~\ref{sec:int-opt}. This motivates the formulation of the Thompson-moonshine-compatible counterpart features that we present in \S~\ref{sec:int-pfm}.
By the end of \S~\ref{sec:int-pfm} we are at a point where penumbral moonshine may be viewed. We describe what we see,
and explain the content of this work, in \S~\ref{sec:int-pmo}.

\subsection{Monstrous Moonshine}\label{sec:int-mns}

In 1978 McKay famously observed a relationship between the Fourier expansion of the \emph{elliptic modular invariant}
\begin{align}\label{eqn:int-mns:ellmodinv}
J(\tau) =  q^{-1}+196884q+21493760q^2+864299970q^3+\cdots \ \   (q=e^{2\pi i \tau}, \ \tau\in\mathbb{H})
\end{align}
and the representation theory of the sporadic simple monster group, $\mathbb{M}$, which was at that point a recent discovery of Fischer and Griess (see \cite{MR0399248,MR671653}). 
Specifically, McKay noticed that the coefficient of $q$ in (\ref{eqn:int-mns:ellmodinv}) is just one more than (what was then conjectured to be) the dimension of the smallest non-trivial ordinary representation of $\MM$. 
Shortly after, Thompson, building upon similar observations \cite{Tho_NmrlgyMonsEllModFn}, conjectured \cite{Tho_FinGpsModFns} the existence of an infinite-dimensional graded $\mathbb{M}$-module 
\begin{align}\label{eqn:int:Vnatural}
V^\natural = \bigoplus_{n\geq -1}V^\natural_n
\end{align}
whose \emph{McKay--Thompson series} 
\begin{align}\label{eqn:int:Tg}
	T_g(\tau) := \sum_{n\geq -1} 
		\tr(g|V^\natural_n)q^n,
\end{align}
for $g\in \MM$,
are the normalized principal moduli associated to genus zero subgroups of $\SL_2(\RR)$. 
That is to say, the McKay--Thompson series $T_g$ should be very special functions. 
(See \S~\ref{sec:pre-mod} for the relevant definitions.) 

In this picture, the elliptic modular invariant (\ref{eqn:int-mns:ellmodinv}) arises when one chooses $g$ to be the identity element of $\mathbb{M}$, and in particular recovers the graded dimension of $V^{\natural}$. 
Away from the identity element, Thompson's conjecture was made precise by Conway and Norton, with their concrete specification \cite{MR554399} of an association of genus zero groups $\Gamma_g<\SL_2(\RR)$ to elements of the monster $g\in \MM$. 
The resulting {\em monstrous moonshine conjecture} was proven by Borcherds \cite{MR1172696}, following the explicit algebraic construction \cite{FLMPNAS,FLMBerk,FLM} of a candidate $\MM$-module (\ref{eqn:int:Vnatural}) by Frenkel, Lepowsky, and Meurman. 

Frenkel, Lepowsky and Meurman established a vertex operator algebra (VOA) structure on $V^\natural$ in \cite{FLM}, following the introduction of the notion of vertex algebra by Borcherds in \cite{Bor_PNAS}, and this structure plays a key role in Borcherds' proof.
Interestingly, this particular example of a VOA serves to define a chiral conformal field theory (CFT) in the sense of physics (see e.g.\ \cite{MR968697,MR1160677}). 
So we may consider applying physical arguments---even if they are not always rigorous---to monstrous moonshine. 

From the physical point of view, the fact that the McKay--Thompson series $T_g$ are modular functions follows once one has the VOA structure in place. 
(Cf.\ e.g.\ \cite{MR2793423,Gaberdiel2010}, and see \cite{Zhu_ModInv,Dong2000,MR2046807,MR3435813} for rigorous mathematical results along these lines.) 
However, the groups $\Gamma_g$, which govern the transformation properties of the $T_g$, are often larger than the 
{\em Hecke congruence groups},
\begin{gather}\label{eqn:int:Gamma0m}
\Gamma_0(m) := \left.\left\{\left(\begin{matrix}a & b \\ cm & d \end{matrix}\right)\,\right|\, a,b,c,d\in \ZZ, ad-bcm=1       \right\},
\end{gather}
that one would expect from established CFT or VOA arguments. 
In particular, the relevant congruence groups (\ref{eqn:int:Gamma0m})
are often not genus zero. 
This enhancement of the invariance groups of the McKay--Thompson series $T_g$ to conspiratorially yield genus zero subgroups $\Gamma_g$ of $\SL_2(\mathbb{R})$ is simultaneously the most mysterious aspect of monstrous moonshine and its defining feature. 

The conceptual origins of monstrous moonshine are not a main focus of this work, although we do not doubt that concrete connections will emerge in due course. 
In the meantime we refer to \cite{DunFre_RSMG} for a proposal that the genus zero property emerges from a $3$-dimensional gravity theory, and refer to \cite{frenkel2020sketch} for connections to universal Teichm\"uller space, and a celebrated infinite simple Thompson group.
An analysis of the genus zero property from a physical, string theoretic point of view, which reinterprets and refines important earlier work \cite{MR1165184,MR1312433} of Tuite, is given in
\cite{MR3573665,MR3708102}. 
For recent 
reviews of moonshine see \cite{Anagiannis:2018jqf,Duncan2020,MR3375653,kachru2016elementary}.

\subsection{Umbral Moonshine}\label{sec:int-umb}

In the time that has passed since McKay's original observation there have been similar discoveries relating privileged automorphic functions to exceptional finite groups, and they have led to new ties between mathematics and physical models arising from CFT and string theory. 
In particular, significant developments have followed from the {\em Mathieu moonshine} observation of Eguchi, Ooguri, and Tachikawa \cite{Eguchi2010}, in which the sporadic simple Mathieu group $M_{24}$ (see e.g.\ \cite{MR0338152,atlas}) governs the Fourier coefficients of a weight $\frac{1}{2}$ mock modular form
\begin{align}\label{eqn:int:H2}
H^{(2)}_1(\tau) = 
-2q^{-\frac{1}{8}}+2\cdot 45q^{\frac 78}+2\cdot 231q^{\frac {15}{8}}+2\cdot 770q^{\frac{23}{8}}+\cdots,
\end{align}
which may be extracted from the elliptic genus of a K3 surface. 
(See e.g.\ \cite{MR2985326} or \cite{MR3375653} for introductory accounts.) 

It was later realized that Mathieu moonshine belongs to a family of interrelated moonshine phenomena. Specifically, {\em umbral moonshine} \cite{UM,MUM,MR3433373,MR3766220} attaches to each Niemeier lattice $L^{(\ell)}$ \cite{Nie_DefQdtFrm24,MR558941} a vector-valued mock modular form 
$H^{(\ell)}=(H^{(\ell)}_r)$
of weight $\frac12$, 
and a finite group 
\begin{gather}\label{eqn:Gell}
G^{(\ell)} := {\Aut}(L^{(\ell)})/W^{(\ell)}
\end{gather} 
(here $W^{(\ell)}$ is the Weyl group of the root system of $L^{(\ell)}$), in such a way that $H^{(\ell)}$ recovers the graded dimension of a virtual graded $G^{(\ell)}$-module,
\begin{gather}\label{eqn:int:Kell}
	K^{(\ell)}=\bigoplus_{r\xmod 2m}\bigoplus_{D\equiv r^2\xmod 4m}K^{(\ell)}_{r,-\frac{D}{4m}}.
\end{gather}
Moreover, the associated vector-valued graded trace functions, $H^{(\ell)}_g=(H^{(\ell)}_{g,r})$, defined for $g\in G^{(\ell)}$ 
by
\begin{gather}\label{eqn:int:Hellgr}
	H^{(\ell)}_{g,r}(\tau):=
	\sum_{D\equiv r^2\xmod 4m}\tr(g|K^{(\ell)}_{r,-\frac{D}{4m}})q^{-\frac{D}{4m}},
\end{gather}
and also called McKay--Thompson series,
are optimal, in a sense which identifies them as natural analogues of normalized principal moduli. 
(See \cite{MR3021323,MR3766220,MR4139238}, and also 
\S~\ref{sec:int-opt} below, for more discussion of this.) 
In brief, the McKay--Thompson series (\ref{eqn:int:Hellgr}) 
of umbral moonshine are very special, just like the McKay--Thompson series (\ref{eqn:int:Tg}) of monstrous moonshine.

We refer to \cite{Dabholkar:2012nd,MR2555930,MR2605321} for introductory accounts of mock modular forms, and refer to \cite{MR3729259} for a thorough reference.
For an introduction to umbral moonshine see \cite{MR3375653}.

\subsection{Motivation}\label{sec:int-mot}

The indexing symbol in umbral moonshine (\ref{eqn:Gell}--\ref{eqn:int:Hellgr})
is called a lambency, following \cite{UM,MUM}. 
It
takes the form
\begin{gather}
\label{eqn:int:ell}
\ell=m+n,n',\dots,
\end{gather} 
and thereby specifies the $m$ in (\ref{eqn:int:Kell}--\ref{eqn:int:Hellgr}).
Mathieu moonshine (\ref{eqn:int:H2}) corresponds to the choice $\ell=2$, 
in which case the root system is 
$A_1^{24}$. 

The 
lambency notation (\ref{eqn:int:ell})
is inspired 
by the original work \cite{MR554399} of Conway and Norton on monstrous moonshine, in which such symbols 
are used to denote some of the aforementioned (in \S~\ref{sec:int-mns}) extensions of 
$\Gamma_0(m)$ that conspire to be genus zero. 
Their convention is that the 
$n,n',\dots$ in (\ref{eqn:int:ell}) represent the 
presence of 
{Atkin--Lehner involutions}
$W_n, W_{n'},\dots$ (see (\ref{eqn:pre-mod:Wn})) in the extension of $\Gamma_0(m)$ defined by (\ref{eqn:int:ell}) (cf.\ (\ref{eqn:pre-mod:Gamma0m+K})). 
In particular, 
\begin{enumerate}
\item
the $n,n',\dots$ in 
(\ref{eqn:int:ell}) should be exact divisors of $m$ (so that $\gcd(n,\frac{m}{n})=1$, and similarly for $n',\dots$), and 
\item
the set $\{1,n,n',\dots\}$ should be closed under the natural group operation (see (\ref{eqn:pre-mod:nstarnprime})) on exact divisors of $m$.
\end{enumerate}
In this work we refer to general symbols $\ell$ as in (\ref{eqn:int:ell}) that satisfy the above two conditions as {\em lambencies}, and use adjectives such as umbral, penumbral, \&c., to refer to those particular lambencies that appear in the corresponding 
settings.
We also say that $\ell$ has {\em genus zero}, or not, according as the corresponding subgroup (see (\ref{eqn:pre-mod:Gamma0m+K})) of $\SL_2(\RR)$ defines a genus zero quotient (see (\ref{eqn:pre-mod:XGamma})) of the upper half-plane, or not. 

The Atkin--Lehner involutions
$W_n$ (\ref{eqn:pre-mod:Wn}) 
for $\Gamma_0(m)$ act naturally, as involutions, on spaces of modular forms for $\Gamma_0(m)$. 
For this reason we may use the $W_n$ 
to refine spaces of modular forms, by splitting them into invariant and anti-invariant parts. A special role in the modular form theory---and in moonshine---is played by the 
{\em Fricke involution}, 
$W_m$, being the coset of $\Gamma_0(m)$ in $\SL_2(\RR)$ represented by 
\begin{gather}\label{eqn:int:Fricke}
	\frac1{\sqrt{m}}\left(\begin{matrix}0 & -1 \\ m & 0 \end{matrix}\right).
\end{gather}
On this account, a lambency $\ell=m+n,n',\dots$, 
and the discrete group it represents, are called {\em Fricke} or {\em non-Fricke}, according as $m$ is included in $\{1,n,n',\dots\}$ or not.
It develops that all of the lambencies of umbral moonshine (\ref{eqn:Gell}--\ref{eqn:int:Hellgr}) are non-Fricke and genus zero. 
(We will say more about this in \S~\ref{sec:int-opt}.)

With the connection to subgroups of $\SL_2(\RR)$ in mind, 
let us 
call $m$ the {\em level} of 
a lambency 
$\ell=m+n,n',\dots$. 
It develops that, for $\ell$ one of the lambencies of umbral moonshine, and $m$ the level of $\ell$, the (non-holomorphic) modular completion $\widehat{H}^{(\ell)}$ of $H^{(\ell)}$ 
transforms in the same way as a vector-valued modular form of weight $\frac12$ 
for the index $m$ Weil representation 
(see (\ref{eqn:pre-mod:slashknugammaupsilon}), (\ref{eqn:pre-mod:ind_m_Weil_rep})) 
of the modular group $\SL_2(\ZZ)=\Gamma_0(1)$ (\ref{eqn:int:Gamma0m}). 
This means that the components $\widehat{H}^{(\ell)}_r$ of $\widehat{H}^{(\ell)}$ transform in the same way as the theta-coefficients (cf.\ (\ref{eqn:pre-jac:thetadecomposition})) of a holomorphic Jacobi form of weight $1$ and index $m$.
That is, if we set
\begin{gather}\label{eqn:int:phiellequalsHellrthetamr}
	{\phi}^{(\ell)}(\tau,z):=\sum_{r\xmod 2m}{H}^{(\ell)}_r(\tau)\theta_{m,r}(\tau,z),
\end{gather}
where the $\theta_{m,r}$ are the theta series of the even positive-definite lattices of rank $1$ (as defined in (\ref{eqn:thetamr})), and if we define $\widehat{\phi}^{(\ell)}$ similarly, taking $\widehat{H}^{(\ell)}_r$ in place of $H^{(\ell)}_r$ in (\ref{eqn:int:phiellequalsHellrthetamr}), then we have the {\em modular invariance} identities
\begin{gather}\label{eqn:int:modularinvariance}
	\widehat{\phi}^{(\ell)}(\tau+1,z)
	=\widehat{\phi}^{(\ell)}(\tau,z)
	=\widehat{\phi}^{(\ell)}\left(-\frac{1}{\tau},\frac{z}{\tau}\right)\frac{1}{\tau}e^{-2\pi i m\frac{z^2}{\tau}}
	,
\end{gather}
and also $\phi(\tau+1,z)=\phi(\tau,z)$, for $\tau\in \HH$ and $z\in \CC$.
(See e.g.\ \cite{MR781735} for a detailed introduction to Jacobi forms, and see \cite{Dabholkar:2012nd} for a more succinct account, that also discusses mock Jacobi forms.)

Given the connection (\ref{eqn:int:phiellequalsHellrthetamr}--\ref{eqn:int:modularinvariance}), between Jacobi forms and umbral moonshine, it is natural to consider the broader theory of the former, in case it can 
shed light on the latter.
Some particularly fruitful aspects of this broader theory have developed from 
relationships to automorphic functions of other kinds; e.g.\ genus $2$ Siegel modular forms \cite{feingold_frenkel}, and elliptic modular forms of integral weight \cite{MR958592}. 
Significantly for us, such relationships have illuminated new, but similar structures. 
For instance, in the setting of \cite{MR958592}, Fricke anti-invariant elliptic modular forms (cf.\ (\ref{eqn:int:Fricke})) are absent.
But they appear naturally---see \cite{MR1074485,MR1116103}---with the introduction \cite{MR1096975} of {skew-holomorphic Jacobi forms},
which supply another non-holomorphic variant of the more typical holomorphic notion.

Briefly, a \emph{skew-holomorphic Jacobi form} of weight $k$ and index $m$ is a real-analytic function $\varphi:\mathbb{H}\times\mathbb{C}\to \mathbb{C}$, which, similar to (\ref{eqn:int:phiellequalsHellrthetamr}), admits a {theta-decomposition} 
\begin{align}\label{eqn:int:skw-hlm-tht-dcm}
\varphi(\tau,z) = \sum_{r\xmod{2m}} \overline{f_r(\tau)}\theta_{m,r}(\tau,z),
\end{align}
where the $\theta_{m,r}$ are as in (\ref{eqn:int:phiellequalsHellrthetamr}), but where the $f_r$ in (\ref{eqn:int:skw-hlm-tht-dcm}) comprise the components of a holomorphic vector-valued modular form $f=(f_r)$ of weight $k-\frac{1}{2}$ for the dual of the index $m$ Weil representation (see (\ref{eqn:pre-mod:ind_m_Weil_rep})). 
In the case that $k=1$, this entails the {\em skew-modular invariance} identities
\begin{gather}\label{eqn:int:skewmodularinvariance}
	{\vf}(\tau+1,z)
	={\vf}(\tau,z)
	={\vf}\left(-\frac{1}{\tau},\frac{z}{\tau}\right)\frac{1}{|\tau|}e^{-2\pi i m\frac{z^2}{\tau}}
	,
\end{gather}
for $\tau\in \HH$ and $z\in \CC$, which may be compared to (\ref{eqn:int:modularinvariance}).

Comparing (\ref{eqn:int:phiellequalsHellrthetamr}--\ref{eqn:int:modularinvariance}) 
with (\ref{eqn:int:skw-hlm-tht-dcm}--\ref{eqn:int:skewmodularinvariance}) 
we see that 
skew-holomorphic Jacobi forms are close cousins of the kinds of functions 
that arise in umbral moonshine. In light of this, a question naturally arises:
\begin{quote}
{\sl Is there a skew-holomorphic analog of umbral moonshine?}
\end{quote}

This is the main 
question of this work, 
and our primary purpose here 
is to provide a positive answer: the phenomenon we call penumbral moonshine. 
In the remainder of this introduction we will offer an approach to navigating, and further delineating, 
the penumbral moonshine landscape.

\subsection{Thompson Moonshine}\label{sec:int-thm}

For a first step along a path toward penumbral moonshine we note that a candidate for a ``first'' case  of a skew-holomorphic analog of umbral moonshine 
has actually already appeared. Indeed, it was conjectured in \cite{MR3521908}, 
and proven in \cite{MR3582425}, 
that a certain 
(scalar-valued) weakly holomorphic 
modular form 
of weight $\frac{1}{2}$ for $\Gamma_0(4)$,
satisfying\footnote{The symbol $\mathcal{F}_3$ was used to denote the function (\ref{eqn:int:F-31}) in \cite{MR3521908}.}
\begin{align}
\begin{split}\label{eqn:int:F-31}
\breve{F}^{(-3,1)}(\tau) &= 
\sum_{D\geq -3} \breve{C}^{(-3,1)}(D)q^D \\
&=  2q^{-3} +248 +2\cdot 27000 q^4 -2\cdot 85995 q^5+2\cdot 1707264 q^8 +\dots,
\end{split}
\end{align}
and closely related to a form appearing in work of Borcherds (see Example 2 in \S~14 of \cite{MR1323986}) 
and 
Zagier (see \S~5 of \cite{Zag_TrcSngMdl}), 
supports moonshine for the sporadic simple group $\Th$ of Thompson. That is, there exists a virtual graded $\Th$-module 
\begin{align}\label{eqn:int:W-31}
\breve W^{(-3,1)} = 
\bigoplus_{D\geq -3} \breve W^{(-3,1)}_D,
\end{align}
whose graded dimension
recovers $\breve{F}^{(-3,1)}$. 
Moreover, the associated graded trace functions, 
\begin{gather}\label{eqn:int:F-31gW-31}
\breve{F}^{(-3,1)}_g(\tau):=\sum_{D\geq -3}\tr(g|{\breve{W}^{(-3,1)}_D})q^D
\end{gather}
for $g\in \Th$, 
are also optimal (cf.\ (\ref{eqn:int:Hellgr})), 
in a sense which identifies them as 
analogs of normalized principal moduli. 
(See \S\S~\ref{sec:int-opt}--\ref{sec:int-pfm} 
below, and 
also \cite{MR4139238}, for more expository discussion of this.)

The connection to skew-holomorphic Jacobi forms (\ref{eqn:int:skw-hlm-tht-dcm}--\ref{eqn:int:skewmodularinvariance}) is
that the 
vector-valued function 
$F^{(-3,1)}=(F_0^{(-3,1)},F_1^{(-3,1)})$, 
defined by
\begin{align}\label{eqn:int:F-31Fourierseries}
F^{(-3,1)}_r(\tau) := 
\sum_{D\equiv r^2\xmod{4}}C^{(-3,1)}(D,r)q^{\frac{D}{4}},
\end{align}
where $C^{(-3,1)}(D,r)$ is taken to be $\breve{C}^{(-3,1)}(D)$ (\ref{eqn:int:F-31}) in case $D\equiv r^2\xmod 4$, and $0$ otherwise, transforms under the dual of the index $m=1$ 
Weil representation. 
That is, the 
function
\begin{gather}\label{eqn:int:varphi31-tht-dcm}
	\vf^{(-3,1)}(\tau,z):=\sum_{r\xmod 2} \overline{F_r^{(-3,1)}(\tau)}\theta_{1,r}(\tau,z)
\end{gather}
(cf.\ (\ref{eqn:int:skw-hlm-tht-dcm})) satisfies the same skew-modular invariance identities (\ref{eqn:int:skewmodularinvariance}) 
as a 
skew-holomorphic Jacobi form of weight $1$ and index $1$. 
Thus the Thompson moonshine (\ref{eqn:int:F-31}--\ref{eqn:int:F-31gW-31}) of \cite{MR3521908} furnishes a candidate first case (\ref{eqn:int:F-31Fourierseries}--\ref{eqn:int:varphi31-tht-dcm}) for a skew-holomorphic counterpart to the umbral moonshine story (\ref{eqn:Gell}--\ref{eqn:int:Hellgr}).

We seek
to situate this Thompson moonshine (\ref{eqn:int:F-31}--\ref{eqn:int:varphi31-tht-dcm}) 
inside its own skew-holomorphic family, 
so let us review 
the role that lambencies (\ref{eqn:int:ell}) play, in organizing the various instances (\ref{eqn:int:phiellequalsHellrthetamr}--\ref{eqn:int:modularinvariance}) of umbral moonshine. This role is structural at the level of Jacobi forms, for 
just as the $n,n',\dots$ in a lambency $\ell=m+n,n',\dots$ as in (\ref{eqn:int:ell}) determine an extension (\ref{eqn:pre-mod:Gamma0m+K}) of the congruence group $\Gamma_0(m)$, 
and refine spaces of modular forms for this group, 
they simultaneously 
refine spaces of Jacobi forms of index $m$, 
by imposing symmetries amongst the theta-coefficients that appear in a theta-decomposition such as
(\ref{eqn:int:phiellequalsHellrthetamr}). 
Specifically, there is a map $n\mapsto a(n)$ which naturally associates a square root of $1$ modulo $4m$ to each exact divisor $n$ of $m$ (see (\ref{eqn:pre-jac:an})),
and the 
$n,n',\dots$ present in a lambency $\ell=m+n,n',\dots$ 
of umbral moonshine represent the requirement that
\begin{gather}\label{eqn:int:HellranequalsHellr}
{H}^{(\ell)}_{ra(n)}={H}^{(\ell)}_{r}
\end{gather} 
for all $r$, and similarly with $n',\dots$ in place of $n$ (and likewise for the completions $\widehat{H}^{(\ell)}_r$). 

Recall from \S~\ref{sec:int-umb}
that the theta-decompositions of holomorphic (\ref{eqn:int:phiellequalsHellrthetamr}) and skew-holomorphic (\ref{eqn:int:skw-hlm-tht-dcm}) Jacobi forms of index $m$ are governed by dual representations of the modular group.
For this reason, 
a lambency $\ell=m+n,n',\dots$ 
may also be used to refine skew-holomorphic Jacobi forms, via precisely the same requirement. That is, we may restrict to skew-holomorphic Jacobi forms $\vf$ of index $m$ satisfying 
\begin{gather}\label{eqn:int:franequalsfr}
f_{ra(n)}=f_r
\end{gather}
for all $r$, and similarly with $n',\dots$ in place of $n$, 
where the $f_r$ are as in (\ref{eqn:int:skw-hlm-tht-dcm}). 

An interesting point of contrast between the holomorphic and skew-holomorphic cases now appears. To see this observe that we obtain ${\phi}^{(\ell)}(\tau,-z)=-{\phi}^{(\ell)}(\tau,z)$ by making two applications of the second modular invariance identity in (\ref{eqn:int:modularinvariance}) (and noting that the completion map $\phi\mapsto \widehat{\phi}$ is linear). In terms of the $H^{(\ell)}_r$ (\ref{eqn:int:phiellequalsHellrthetamr}) this manifests in the requirement that 
\begin{gather}\label{eqn:int:HellminusrequalsminusHellr}
{H}^{(\ell)}_{-r}=-{H}^{(\ell)}_{r}
\end{gather} 
for all $r$, since $\theta_{m,r}(\tau,-z)=\theta_{m,-r}(\tau,z)$ (cf.\ (\ref{eqn:thetamr})). 
On the other hand, two applications of the second identity in (\ref{eqn:int:skewmodularinvariance}) yields $\vf(\tau,-z)=\vf(\tau,z)$, 
so we must have
\begin{gather}\label{eqn:int:fminusrequalsfr}
	f_{-r}=f_{r}
\end{gather} 
for theta-coefficients (\ref{eqn:int:skw-hlm-tht-dcm}) 
in the skew-holomorphic case. (Cf.\ (\ref{eqn:pre-jac:fminusr}).)
We also have $a(m)=-1$ (see (\ref{eqn:pre-jac:an})).
So comparing (\ref{eqn:int:HellranequalsHellr}) and (\ref{eqn:int:HellminusrequalsminusHellr}) we see one reason why 
the lambencies of umbral moonshine are non-Fricke (cf.\ (\ref{eqn:int:Fricke})), and comparing (\ref{eqn:int:franequalsfr}) and (\ref{eqn:int:fminusrequalsfr}) we see that any 
lambencies of relevance to penumbral moonshine should be Fricke. 

Another consequence of (\ref{eqn:int:HellminusrequalsminusHellr}) is that the components $H^{(2)}_r$ of $H^{(2)}$ are completely determined by $H^{(2)}_1$ (\ref{eqn:int:H2}). Indeed, the $H^{(2)}_r$ are indexed by $r\xmod 4$ according to (\ref{eqn:int:phiellequalsHellrthetamr}), and (\ref{eqn:int:HellminusrequalsminusHellr}) demands that $H^{(2)}_3=-H^{(2)}_{1}$, and $H^{(2)}_r=0$ for $r$ even. 
By a similar token, there is no $\ell=1$ case of umbral moonshine, because $r\equiv-r\xmod 2m$ for all $r$, when $m=1$, and this forces $H^{(\ell)}_r=-H^{(\ell)}_r$ for all $r$ in (\ref{eqn:int:HellminusrequalsminusHellr}).

In contrast, the skew-modular invariance requirement (\ref{eqn:int:skewmodularinvariance}) does not 
rule out examples with $m=1$, as 
the identity (\ref{eqn:int:fminusrequalsfr}) is tautological in this case. Sure enough, Thompson moonshine in the skew-holomorphic formulation (\ref{eqn:int:F-31Fourierseries}--\ref{eqn:int:varphi31-tht-dcm}) is consistent with the ``first'' Fricke choice, $\ell=1$.

\subsection{Umbral Forms}\label{sec:int-opt}

For another step in the direction of a distinguished skew-holomorphic family, we recall in more detail what distinguishes the forms $H^{(\ell)}$ of umbral moonshine. 
For this, note that by taking (\ref{eqn:int:phiellequalsHellrthetamr}) 
together with 
translation invariance $\phi^{(\ell)}(\tau+1,z)=\phi^{(\ell)}(\tau,z)$ (cf.\ (\ref{eqn:int:modularinvariance})), or simply by (\ref{eqn:int:Hellgr}), 
we obtain {Fourier series} expansions
\begin{gather}\label{eqn:int:phiellFourierseries}
	H^{(\ell)}_r(\tau)=\sum_{{D\equiv r^2\xmod 4m}}C^{(\ell)}(D,r)q^{-\frac{D}{4m}},
\end{gather}
for the component functions $H^{(\ell)}_r$ of the $H^{(\ell)}$. 
From (\ref{eqn:int:phiellFourierseries})
we see that
$H^{(\ell)}_r$ can only fail to be bounded as $\Im(\tau)\to \infty$, 
if a {Fourier coefficient} $C^{(\ell)}(D,r)$ is 
non-zero for some $D>0$. 

If it were the case that $C^{(\ell)}(D,r)$ were vanishing, for all $r$ and all $D>0$, then the $H^{(\ell)}_r$ would be unary theta series of weight $\frac12$, according to a result of Serre and Stark \cite{MR0472707} (and we would have $\widehat{H}^{(\ell)}_r=H^{(\ell)}_r$ and $\widehat{\phi}^{(\ell)}=\phi^{(\ell)}$). 
That is, the $H^{(\ell)}_r$ would be linear combinations of the Thetanullwerte $\theta^0_{m',r'}$ (see (\ref{eqn:thetamr})). However, Skoruppa showed \cite{Sko_Thesis} that there are no unary theta series compatible with the conditions (\ref{eqn:int:phiellequalsHellrthetamr}--\ref{eqn:int:modularinvariance}) satisfied by holomorphic Jacobi forms of weight $1$.
So the $H^{(\ell)}$ and $\phi^{(\ell)}$ (\ref{eqn:int:phiellequalsHellrthetamr}) of umbral moonshine are uniquely determined by the Fourier coefficients $C^{(\ell)}(D,r)$ (\ref{eqn:int:phiellFourierseries}) with $D> 0$.

We can now explain the sense in which the 
$H^{(\ell)}$ of umbral moonshine 
naturally serve as analogs of normalized principal moduli. Indeed,
{\em optimality} for the mock modular forms $H^{(\ell)}=(H^{(\ell)}_r)$ of umbral moonshine is the condition that 
\begin{gather}\label{eqn:int:umbral_optimality}
C^{(\ell)}(D,r)=0\text{ for $D>0$ unless $D=1$.} 
\end{gather}
And
the lambency $\ell$ (\ref{eqn:int:ell}) 
actually determines the $C^{(\ell)}(D,r)$ with $D=1$, 
once we've fixed a value for $C^{(\ell)}(1,1)$,
according to the following rule.
\begin{gather}\label{eqn:int:umbral_lambency_condition}
C^{(\ell)}(1,r)=
\begin{cases}
	C^{(\ell)}(1,1)&\text{ if $r\equiv a\xmod 2m$ for some $a\in \{1,a(n),a(n'),\dots\}$,}\\
	-C^{(\ell)}(1,1)&\text{ if $r\equiv -a\xmod 2m$ for some $a\in \{1,a(n),a(n'),\dots\}$,}\\
	0&\text{ else.}
\end{cases}
\end{gather}

Note that the consistency of this rule (\ref{eqn:int:umbral_lambency_condition}) depends upon the fact that $\{1,n,n',\dots\}$ is a group (see \S~\ref{sec:int-mot}) under the natural operation (\ref{eqn:pre-mod:nstarnprime}). 
Note also that (\ref{eqn:int:umbral_lambency_condition}) is consistent with (\ref{eqn:int:HellranequalsHellr}) and (\ref{eqn:int:HellminusrequalsminusHellr}). For this reason (\ref{eqn:int:umbral_lambency_condition}) would force $C^{(\ell)}(1,r)=0$ for all $r$, if $\ell$ were not non-Fricke.

In umbral moonshine we have the convention that $C^{(\ell)}(1,1)=-2$ for all $\ell$ (cf.\ (\ref{eqn:int:H2})), so $H^{(\ell)}$ is uniquely determined by the lambency $\ell$ via (\ref{eqn:int:umbral_lambency_condition}), and the umbral optimality condition (\ref{eqn:int:umbral_optimality}), 
just as the McKay--Thompson series $T_g$ (\ref{eqn:int:Tg}) of monstrous moonshine is uniquely determined by its invariance group $\Gamma_g$, and the condition that it be a normalized principal modulus for that group. 

We have mentioned in \S~\ref{sec:int-mot} that each lambency $\ell=m+n,n',\dots$ 
specifies a 
subgroup 
of $\SL_2(\RR)$ 
that extends 
$\Gamma_0(m)$ (see (\ref{eqn:pre-mod:Gamma0m+K})), 
and we have explained in \S~\ref{sec:int-thm} how such a symbol simultaneously refines, via symmetries (\ref{eqn:int:HellranequalsHellr}),
vector-valued mock modular forms 
that are compatible with (\ref{eqn:int:phiellequalsHellrthetamr}--\ref{eqn:int:modularinvariance}), so long as it is non-Fricke. 
It develops that, just as normalized principal moduli exist only for groups that are genus zero, a vector-valued mock modular form 
that is optimal in the umbral sense (\ref{eqn:int:umbral_optimality}), and satisfies 
(\ref{eqn:int:umbral_lambency_condition}) for some non-Fricke lambency 
$\ell$,
can be involved in moonshine if and only if 
$\ell$ 
has genus zero. 
Indeed, 
examples of such forms, satisfying (\ref{eqn:int:umbral_optimality}--\ref{eqn:int:umbral_lambency_condition}) and having integer Fourier coefficients (\ref{eqn:int:phiellFourierseries}), are exhibited in \cite{MR4127159} for each non-Fricke 
lambency (\ref{eqn:int:ell}) 
of genus zero. 
On the other hand,
it is proven in \cite{MR4127159} that such a vector-valued mock modular form necessarily has transcendental Fourier coefficients (\ref{eqn:int:phiellFourierseries}), if the subgroup (\ref{eqn:pre-mod:Gamma0m+K}) of $\SL_2(\RR)$ represented by the lambency 
in question has positive genus. 
Forms with transcendental coefficients are problematic for moonshine, because transcendental numbers do not arise as dimensions of representations (cf.\ (\ref{eqn:int:Hellgr})).

\subsection{Penumbral Forms}\label{sec:int-pfm}

We now pursue parallels,
in the skew-holomorphic setting,
to the distinguishing provisions (\ref{eqn:int:umbral_optimality}--\ref{eqn:int:umbral_lambency_condition}) that 
we have just reviewed. 
For this, (\ref{eqn:int:skw-hlm-tht-dcm}) and (\ref{eqn:int:skewmodularinvariance}) take the place of (\ref{eqn:int:phiellequalsHellrthetamr}) and (\ref{eqn:int:modularinvariance}), 
and the counterpart to (\ref{eqn:int:phiellFourierseries}) is
\begin{gather}\label{eqn:int:varphiFourierseries}
	f_r(\tau)=\sum_{{D\equiv r^2\xmod 4m}}C_\vf(D,r)q^{\frac{D}{4m}}.
\end{gather}
(Sure enough, (\ref{eqn:int:F-31Fourierseries}) is consistent with the $m=1$ case of this.)
So in particular, 
the 
$f_r$ in (\ref{eqn:int:skw-hlm-tht-dcm}) 
can only fail to be bounded as $\Im(\tau)\to \infty$ if $C_\vf(D,r)\neq 0$ for some $D<0$. 

Thus, for a skew-holomorphic analog of umbral optimality (\ref{eqn:int:umbral_optimality}), we should choose a $D_0<0$, and 
require 
that 
\begin{gather}\label{eqn:int:penumbral_optimality}
C_\vf(D,r)=0\text{ for $D<0$ unless $D=D_0$.}
\end{gather}
Then, for skew-holomorphic counterparts to the functions of umbral moonshine, we should consider lambencies
$\ell$
that are Fricke (\ref{eqn:int:Fricke}) (on account of (\ref{eqn:int:franequalsfr}) and (\ref{eqn:int:fminusrequalsfr})), and 
examine the weakly holomorphic vector-valued modular forms $f=(f_r)$, 
compatible with (\ref{eqn:int:skw-hlm-tht-dcm}--\ref{eqn:int:skewmodularinvariance}), that satisfy {\em $D_0$-optimality} (\ref{eqn:int:penumbral_optimality}), and 
a suitable analog of the lambency condition (\ref{eqn:int:umbral_lambency_condition}) for $\ell$. 
Since $a$ is congruent to one of $a(1),a(n),a(n'),\dots$ modulo $2m$ if and only if $-a$ is, when $\ell=m+n,n',\dots$ is Fricke, and since $C_\vf(D,r)=C_\vf(D,-r)$ for all $r$ according to (\ref{eqn:int:fminusrequalsfr}), the natural skew-holomorphic counterpart to (\ref{eqn:int:umbral_lambency_condition}) is 
\begin{gather}\label{eqn:int:penumbral_lambency_condition}
C_\vf(D_0,r)=
\begin{cases}
	C_\vf(D_0,r_0)&\text{ if $r\equiv r_0a\xmod 2m$ for some $a\in \{1,a(n),a(n'),\dots\}$,}\\
	0&\text{ else,}
\end{cases}
\end{gather}
where $r_0$ is a fixed square root of $D_0$ modulo $4m$.

The reader may have anticipated by this point that the defining function 
of Thompson moonshine (\ref{eqn:int:F-31}--\ref{eqn:int:F-31gW-31}) in the skew-holomorphic formulation,
$F^{(-3,1)}=(F^{(-3,1)}_r)$ (\ref{eqn:int:F-31Fourierseries}), 
satisfies $D_0$-optimality (\ref{eqn:int:penumbral_optimality}) and the lambency condition (\ref{eqn:int:penumbral_lambency_condition}),
for 
$D_0$ and $\ell$ as indicated by the superscript, $(D_0,\ell)=(-3,1)$. 
Penumbral moonshine develops from similarly distinguished 
vector-valued modular forms $F^{(\pd)}=(F^{(\pd)}_r)$, 
determined, via (\ref{eqn:int:penumbral_optimality}--\ref{eqn:int:penumbral_lambency_condition}), by more general values of $D_0$ and $\ell$, 
as encoded by the {\em lambdency} 
symbols\footnote{As a mnemonic, a lambdency is ``a lambency with a $D$.'' Pronunciation: ``{\sl lam}-den-see.''}
\begin{gather}\label{eqn:int:lambdency}
\pd=(D_0,\ell).
\end{gather} 
But in advance of more discussion of moonshine, 
we should detail how it is that these counterparts $F^{(\pd)}$ to $F^{(-3,1)}$ are determined by their lambdencies (\ref{eqn:int:lambdency}), and upon what grounds 
they are 
distinguished.
Indeed, 
we have explained in \S~\ref{sec:int-opt} how it is that the conditions (\ref{eqn:int:umbral_optimality}--\ref{eqn:int:umbral_lambency_condition}), 
relevant to umbral moonshine, 
provide an 
analog of the notion of normalized principal modulus, the latter being crucial in 
monstrous moonshine (see \S~\ref{sec:int-mns}).
To evaluate the effectiveness of (\ref{eqn:int:penumbral_optimality}--\ref{eqn:int:penumbral_lambency_condition})
we should address
the extent to which that analogy extends. 

In a word, it extends, but there are three 
subtleties arising in the skew-holomorphic setting that should be addressed:
\begin{enumerate}
\item\label{itm:int-pfm:subtlety1}
There are unary theta series 
that are compatible with 
the conditions (\ref{eqn:int:skw-hlm-tht-dcm}--\ref{eqn:int:skewmodularinvariance}). For example, we may take 
\begin{gather}\label{eqn:int-pfm:frthetamr0}
f_r(\tau)=\theta_{m,r}^0(\tau):
=\sum_n q^{\frac{(2mn+r)^2}{4m}}
\end{gather}
(cf.\ (\ref{eqn:thetamr}))
in (\ref{eqn:int:skw-hlm-tht-dcm}),
and this is compatible with (\ref{eqn:int:skewmodularinvariance}) for any $m$. 
Other solutions to 
(\ref{eqn:int:skw-hlm-tht-dcm}--\ref{eqn:int:skewmodularinvariance}), with $C_\vf(D,r)=0$ in (\ref{eqn:int:varphiFourierseries}) for all $D<0$ and all $r$, 
are 
also unary theta series, 
by the same result \cite{MR0472707} we mentioned in \S~\ref{sec:int-opt}. 
Thus we may say that a weakly holomorphic modular form $f=(f_r)$ as in (\ref{eqn:int:skw-hlm-tht-dcm}--\ref{eqn:int:skewmodularinvariance}) is determined by its Fourier coefficients $C_\vf(D,r)$ (\ref{eqn:int:varphiFourierseries}) with $D<0$, but only ``up to 
theta series.'' 
Also,
the symmetry condition (\ref{eqn:int:franequalsfr}) will not automatically be satisfied by a theta series solution, unless we impose it explicitly.
\item\label{itm:int-pfm:subtlety2}
There may be no non-theta series $D_0$-optimal (\ref{eqn:int:penumbral_optimality}) solutions to the 
lambency condition (\ref{eqn:int:penumbral_lambency_condition}), depending on how the lambency $\ell=m+n,n',\dots$ 
is chosen. 
For example, if $D_0$ is not a square modulo $4m$ then there is no $r$ for which $C_\vf(D_0,r)$ appears in (\ref{eqn:int:phiellFourierseries}), and in particular no choice for $r_0$ in (\ref{eqn:int:penumbral_lambency_condition}). 
And when 
$D_0$ is a square modulo $4m$, it may be that 
any solution to (\ref{eqn:int:penumbral_lambency_condition}) 
automatically solves it for a ``larger'' lambency. 
For example, if $D_0=-4$ and $\ell=10+10$, 
then we must take $r_0=6$ or $r_0=14$. Either way we have $r_0a(2)\equiv r_0 \xmod 20$ (cf.\ \S~\ref{sec:not}), so $\ell=10+10$ may be replaced with $\ell=10+2,5,10$ in (\ref{eqn:int:penumbral_lambency_condition}).
\item\label{itm:int-pfm:subtlety3}
It is possible that there is a serious ambiguity in the choice of $r_0$ in (\ref{eqn:int:penumbral_lambency_condition}). Specifically, there may be solutions $r$ to $D_0\equiv r^2\xmod 4m$ that do not arise as $r_0a(n)\xmod 2m$, for any exact divisor $n$ of $m$. 
In this case different choices of $r_0$ can 
lead to substantially different forms  
(by singling out different subrepresentations of the 
relevant
Weil representation, cf.\ (\ref{eqn:pre-jac:Jwshkm_decomp})). 
\end{enumerate}

Now, the absence of theta series in the umbral setting 
(\ref{eqn:int:phiellequalsHellrthetamr}--\ref{eqn:int:modularinvariance}) 
does not represent as significant a 
discrepancy as it might seem, 
because unary theta series solutions do arise when the modular invariance condition (\ref{eqn:int:modularinvariance}), representing invariance under the action of the modular group $\SL_2(\ZZ)=\Gamma_0(1)$, 
is relaxed to invariance under $\Gamma_0(n)$ 
for suitable $n$. 
(See \cite{MR3766220} for more discussion of this.)
So the absence of theta series solutions to 
(\ref{eqn:int:phiellequalsHellrthetamr}--\ref{eqn:int:modularinvariance}) 
is the exception, rather than the rule.
Moreover, the coefficients of unary theta series, like 
(\ref{eqn:int-pfm:frthetamr0}), do not grow, and are sparse, being supported at perfect square powers (of some fixed power of $q$, cf.\ (\ref{eqn:int-pfm:frthetamr0}), (\ref{eqn:thetamr})). On this basis, and with the comparison to principal moduli in mind, it is natural to regard the theta series solutions to (\ref{eqn:int:penumbral_optimality}--\ref{eqn:int:penumbral_lambency_condition}) as counterparts to constant functions.

Regarding the second subtlety, 
we can do no less than respect the restriction on lambencies (\ref{eqn:int:ell}) that it represents. We also desire that lambencies accurately represent the symmetries (\ref{eqn:int:franequalsfr}) of the forms they index (cf.\ (\ref{eqn:int:lambdency})), so we formalize the situation by saying that $D_0<0$ is {\em $\ell$-admissible}, for $\ell=m+n,n',\dots$ as in (\ref{eqn:int:ell}) and Fricke (\ref{eqn:int:Fricke}), if $D_0$ is a square modulo $4m$, and if solutions to the lambency condition (\ref{eqn:int:penumbral_lambency_condition}) do not automatically satisfy it for a larger group of exact divisors of $m$ than the one $\{1,n,n',\dots\}$ that is encoded in $\ell$.

It develops that the third subtlety does not arise under suitable restrictions on $D_0$ and $m$. For example, we can rule it out by 
requiring that $D_0$ is a {\em fundamental discriminant}, in the sense that it coincides with the discriminant of the number field that its square root defines, 
\begin{gather}\label{eqn:int-pfm:D0fund}
	D_0= 
	\Disc(\QQ(\sqrt{D_0})).
\end{gather}
We make this restriction (\ref{eqn:int-pfm:D0fund}) in this work, and 
moreover adopt the convention that $r_0$ 
be the smallest non-negative solution to 
\begin{gather}\label{eqn:int-pfm:D0equivr02mod4m}
D_0\equiv r_0^2\xmod 4m
\end{gather}
in (\ref{eqn:int:penumbral_lambency_condition}).
Thus we avoid 
ambiguity in 
(\ref{eqn:int:penumbral_lambency_condition}) in what follows. 

With these considerations in place, we are ready to present a comparison to 
principal moduli. For this, choose a fundamental $D_0<0$ (\ref{eqn:int-pfm:D0fund}), 
and a (necessarily Fricke) lambency $\ell=m+n,n',\dots$ 
such that $D_0$ is $\ell$-admissible.
Then a weakly holomorphic modular form $f=(f_r)$ as in (\ref{eqn:int:skw-hlm-tht-dcm}--\ref{eqn:int:skewmodularinvariance}) is uniquely determined, up to scale, and up to theta series, by $D_0$-optimality (\ref{eqn:int:penumbral_optimality}) and the lambency condition (\ref{eqn:int:penumbral_lambency_condition}). 
Thus, if 
$F^{(\pd)}=(F^{(\pd)}_r)$ is 
a non-theta series
$D_0$-optimal (\ref{eqn:int:penumbral_optimality}) solution 
to (\ref{eqn:int:penumbral_lambency_condition}) for $\ell$, 
where 
$\pd=(D_0,\ell)$, 
then $F^{(\pd)}$
is analogous to a principal modulus. Furthermore, writing
\begin{gather}\label{eqn:int-pfm:Fpdr_Fourier}
	F^{(\pd)}_r(\tau) = \sum_{D\equiv r^2\xmod 4m} C^{(\pd)}(D,r)q^{\frac{D}{4m}}
\end{gather}
(cf.\ (\ref{eqn:int:varphiFourierseries})),
we obtain analogs of normalized principal moduli 
by fixing $C^{(\pd)}(D_0,r_0)$ (where $r_0$ is as in (\ref{eqn:int-pfm:D0equivr02mod4m})), 
and fixing enough $C^{(\pd)}(D,r)$ with $D=d^2$ a perfect square (cf.\ (\ref{eqn:int-pfm:frthetamr0})), to pin down the contributions from theta series. 
Also, we reduce the number of $d$ to be considered by 
requiring the symmetry 
(\ref{eqn:int:franequalsfr}) represented by $\ell$.

We have mentioned in \S~\ref{sec:int-opt} that the vector-valued mock modular form determined by umbral optimality (\ref{eqn:int:umbral_optimality}) and the lambency condition (\ref{eqn:int:umbral_lambency_condition}), 
for $\ell$ 
non-Fricke,
can only 
participate in moonshine if 
$\ell$ 
has genus zero, on account of the fact that it necessarily has transcendental Fourier coefficients, if $\ell$ has positive genus.
In the present setting the connection to genus zero groups may be considered to be even 
closer to the 
monstrous one, in that it 
manifests in the very existence of $F^{(\pd)}$ as above. To wit, we prove in our companion paper \cite{pmz} that $F^{(\pd)}=(F^{(\pd)}_r)$, as in (\ref{eqn:int-pfm:Fpdr_Fourier}), for $\pd=(D_0,\ell)$, is guaranteed to exist if 
$\ell$ 
has genus zero, 
but fails to exist for generic choices of $D_0$ and $\ell$.
 With regard to rationality, we mention that a function $F^{(\pd)}=(F^{(\pd)}_r)$ as in 
(\ref{eqn:int-pfm:Fpdr_Fourier}) has integral Fourier coefficients, for suitable choices of $C^{(\pd)}(D_0,r_0)$ 
and $C^{(\pd)}(d^2,r)$, 
according to the main result of 
\cite{MR1981614}. 

We 
emphasize here that while the functions (\ref{eqn:int:Hellgr}) appearing in umbral moonshine are generally mock modular, the functions of 
penumbral moonshine 
are genuinely 
modular. This is part of the reason we refer to penumbral moonshine as such: It is similar to umbral moonshine in various ways, but less shadowy. However, as we will 
see presently,
many features of penumbral moonshine remain to be illuminated.

\subsection{Penumbral Moonshine}\label{sec:int-pmo}

We have arrived at the following place. For each lambdency $\pd=(D_0,\ell)$ (\ref{eqn:int:lambdency}), where $\ell$ is a lambency (\ref{eqn:int:ell})
that is Fricke (\ref{eqn:int:Fricke}) and genus zero, and 
$D_0$ is an
$\ell$-admissible (see \S~\ref{sec:int-pfm})
negative fundamental discriminant (\ref{eqn:int-pfm:D0fund}), we have a distinguished weakly holomorphic vector-valued modular form $F^{(\pd)}=(F^{(\pd)}_r)$ as in (\ref{eqn:int-pfm:Fpdr_Fourier}), 
uniquely determined by (\ref{eqn:int:penumbral_optimality}--\ref{eqn:int:penumbral_lambency_condition}), up to the choice of $C^{(\pd)}(D_0,r_0)$ and $C^{(\pd)}(d^2,r)$ (for finitely many $d$ and $r$). Furthermore, for suitable choices 
of $C^{(\pd)}(D_0,r_0)$ and the $C^{(\pd)}(d^2,r)$,
the Fourier coefficients 
of the component functions $F^{(\pd)}_r$ are integers.

The mysterious promise of moonshine is that, for each such lambdency $\pd=(D_0,\ell)$ as above 
(and for appropriate choices of $C^{(\pd)}(D_0,r_0)$ and the $C^{(\pd)}(d^2,r)$), 
there should be an associated finite group $G^{(\pd)}$ (cf.\ (\ref{eqn:Gell})), 
and a virtual graded $G^{(\pd)}$-module
\begin{gather}\label{eqn:int-pmo:Wpd}
	W^{(\pd)}=\bigoplus_{r\xmod 2m}\bigoplus_{D\equiv r^2\xmod 4m}W^{(\pd)}_{r,\frac{D}{4m}}
\end{gather}
(cf.\ (\ref{eqn:int:Kell})), such that $F^{(\pd)}$ is the graded dimension function of $W^{(\pd)}$.
Moreover, the associated vector-valued graded trace functions (a.k.a.\ McKay--Thompson series) $F^{(\pd)}_g=(F^{(\pd)}_{g,r})$, defined for $g\in G^{(\pd)}$ by
\begin{gather}\label{eqn:int-pmo:Fpdgr}
	F^{(\pd)}_{g,r}(\tau):=
	\sum_{D\equiv r^2\xmod 4m}\tr(g|W^{(\pd)}_{r,\frac{D}{4m}})q^{\frac{D}{4m}}
\end{gather}
(cf.\ (\ref{eqn:int:Hellgr})), 
should 
be distinguished directly similarly to $F^{(\pd)}$, 
via suitable (higher-level) counterparts to (\ref{eqn:int:penumbral_optimality}--\ref{eqn:int:penumbral_lambency_condition}).

We present 
proof of this promise in this work, for $D_0=-3$ and $D_0=-4$. Indeed, $\pd=(-3,1)$ corresponds to the original motivating example of Thompson moonshine (\ref{eqn:int:F-31}--\ref{eqn:int:varphi31-tht-dcm}), and the more general cases $\pd=(-3,\ell)$ define our sought-after skew-holomorphic family; the goal we set for ourselves in \S~\ref{sec:int-thm}. 
But although $D_0=-3$ is the smallest (in magnitude) value we can take for $D_0$ in (\ref{eqn:int:penumbral_optimality}), 
nothing we have seen along the path we have taken rules out the possibility of other worthy values of $D_0$. 
We manifest this possibility in this work, by providing 
another skew-holomorphic family---and therefore another skew-holomorphic analog of umbral moonshine---that is defined by lambdencies of the form $\pd=(-4,\ell)$. 

In this paper, 
moonshine manifests as follows.
\begin{enumerate}
\item\label{itm:int-pmo:evidence1}
In \S~\ref{sec:mns-gps} we identify finite groups $G^{(\pd)}$, for $\pd=(D_0,\ell)$ as above, with $D_0=-3$ and $D_0=-4$. 
(In particular, each $\ell$ arising is Fricke and genus zero, and $\pd=(D_0,\ell)$ appears only when $D_0$ is $\ell$-admissible.) 
\item\label{itm:int-pmo:evidence2}
In \S~\ref{sec:mns-fms} we 
specify the 
vector-valued modular forms $F^{(\pd)}_g=(F^{(\pd)}_{g,r})$, for $g\in G^{(\pd)}$, that 
we expect to arise as McKay--Thompson series (\ref{eqn:int-pmo:Fpdgr}). 
By construction they are distinguished, in 
that they satisfy higher-level counterparts to (\ref{eqn:int:penumbral_optimality}--\ref{eqn:int:penumbral_lambency_condition}).
\item\label{itm:int-pmo:evidence3}
In \S~\ref{sec:mns-mds} we prove the existence of virtual graded $G^{(\pd)}$-modules $W^{(\pd)}$ (\ref{eqn:int-pmo:Wpd}), that realize the distinguished forms $F^{(\pd)}_g=(F^{(\pd)}_{g,r})$ as McKay--Thompson series (\ref{eqn:int-pmo:Fpdgr}).
\item\label{itm:int-pmo:evidence4}
Also in \S~\ref{sec:mns-mds}, we further demonstrate the distinguished nature of the forms $F^{(\pd)}_g$, at least for some lambdencies $\pd$, by 
identifying discriminant properties of the corresponding $G^{(\pd)}$-modules $W^{(\pd)}$ (\ref{eqn:int-pmo:Wpd}), 
similar 
to those observed in umbral moonshine (see \cite{UM,MUM} and \cite{MR3231314}) and Thompson moonshine (see \cite{MR3521908}). 
These properties serve to predict the minimal extension of $\QQ$ over which the $G^{(\pd)}$-module structure on $W^{(\pd)}_{r,\frac{D}{4m}}$ can be defined, in terms of the value $D$. 
\item\label{itm:int-pmo:evidence5}
For some of the lambdencies $\pd$ arising we give descriptions of the corresponding groups $G^{(\pd)}$ in terms of 
modular lattices in \S~\ref{sec:mns-lts}. 
This suggests that modular lattices 
may play an organizing role in penumbral moonshine, similar to that played by the Niemeier lattices (\ref{eqn:Gell}) in umbral moonshine 
(though the precise nature of this aspect of penumbral moonshine remains to be elucidated).
\end{enumerate}

We restrict here to $D_0$-optimality for $D_0=-3$ and $D_0=-4$, both for practical and theoretical reasons. 
One practical concern is the size of this manuscript. 
A slightly less prosaic practical concern is the size of the Fourier coefficients in (\ref{eqn:int-pfm:Fpdr_Fourier}). These coefficients grow more and more quickly as $D_0$ increases in magnitude,
and the more quickly they grow, the harder it gets to guess the corresponding groups $G^{(\pd)}$, and therefore also the corresponding structures (\ref{eqn:int-pmo:Wpd}--\ref{eqn:int-pmo:Fpdgr}).

A theoretical, and perhaps more interesting reason for restricting to $D_0=-3$ and $D_0=-4$ is the fact that the genus zero criterion for the existence of $F^{(\pd)}=(F^{(\pd)}_r)$ as in (\ref{eqn:int-pfm:Fpdr_Fourier}), that we mentioned in the penultimate paragraph of \S~\ref{sec:int-pfm}, admits a converse for these values of $D_0$, but not in general. 
Indeed, we prove in \cite{pmz} that $F^{(\pd)}=(F^{(\pd)}_r)$ as in (\ref{eqn:int-pfm:Fpdr_Fourier}) does not exist if $D_0=-3$ or $D_0=-4$ and the lambency $\ell$ in $\pd=(D_0,\ell)$ has positive genus (at least under some conditions on the level of $\ell$), and also explain some ways that this statement fails, for more general values of $D_0$. 
(See op.\ cit.\ for explicit examples and more discussion of this.) 

The reader may recall from \S~\ref{sec:int-opt} that the genus zero criterion on $\ell$ is both sufficient and necessary in the umbral case. That is, a mock modular form $H^{(\ell)}$ as in (\ref{eqn:int:phiellequalsHellrthetamr}--\ref{eqn:int:modularinvariance}), that satisfies the optimality condition (\ref{eqn:int:umbral_optimality}) and the lambency condition (\ref{eqn:int:umbral_lambency_condition}), can be involved in moonshine if and only if the (necessarily non-Fricke) lambency $\ell$ has genus zero.
With this in mind we take the 
above-mentioned necessity of the genus zero criterion for the (necessarily Fricke) lambency $\ell$,
in the penumbral cases that $D_0=-3$ and $D_0=-4$, as motivation for our restriction to these values of $D_0$ in this work. 

The above notwithstanding, it is not our intention to discourage the pursuit of penumbral moonshine for $D_0<-4$. Indeed, we have no concrete evidence to offer 
that penumbral moonshine is not more general than the 
phenomena that we present in this paper. It may be that the most interesting instances are yet to be discovered.
(That said, the adventurous reader should beware of the non-examples that we discuss in \S~\ref{app:non}, and also keep in mind that the extension of penumbral moonshine to non-fundamental $D_0$ will generally necessitate a choice of $r_0$ in (\ref{eqn:int:penumbral_lambency_condition}), and a corresponding enrichment $\pd=(D_0,r_0,\ell)$ of the notation (\ref{eqn:int:lambdency}).)

Another restriction we make in this work is to only consider the lambencies $\ell=m+n,n',\dots$ with square-free level $m$. Again there are practical and theoretical reasons for this. A practical reason is the fact that the Fourier coefficients in (\ref{eqn:int-pfm:Fpdr_Fourier}) are very small, for the (very few) genus zero lambencies arising that don't have square-free level. This prevents the corresponding groups $G^{(\pd)}$, and module structures $W^{(\pd)}$ (\ref{eqn:int-pmo:Wpd}), from being very interesting in these cases. A theoretical reason is the fact that the theta series ambiguity in $F^{(\pd)}=(F^{(\pd)}_r)$ as in (\ref{eqn:int-pfm:Fpdr_Fourier}) turns out to be as small as possible when $m$ is square-free, at least if we require the symmetry condition (\ref{eqn:int:franequalsfr}). That is, for $\ell$ Fricke, genus zero, and of square-free level $m$, the prescription (\ref{eqn:int-pfm:frthetamr0}) gives the only theta series solution to (\ref{eqn:int:penumbral_optimality}--\ref{eqn:int:penumbral_lambency_condition}) that is also consistent with the symmetry condition (\ref{eqn:int:franequalsfr}), so that the vector-valued modular form $F^{(\pd)}$ in (\ref{eqn:int-pfm:Fpdr_Fourier}) is uniquely determined by $C^{(\pd)}(D_0,r_0)$ and $C^{(\pd)}(1,1)$. On this basis, $F^{(\pd)}$ 
is as strongly as possible an analog of a normalized principal modulus, when the level of $\ell$ in $\pd=(D_0,\ell)$ is square-free.

In this work we take $C^{(\pd)}(1,1)=0$ for all $\pd$ arising (cf.\ (\ref{eqn:int:F-31}) and (\ref{eqn:int:F-31Fourierseries})), and use representation theoretic considerations to fix the $C^{(\pd)}(D_0,r_0)$. 
The condition that $C^{(\pd)}(1,1)=0$ just seems to be preferred by penumbral moonshine, at least for $D_0=-3$ and $D_0=-4$.
The determination of $C^{(\pd)}(D_0,r_0)$ comes down to the degrees of the number fields defined by the irreducible characters of $G^{(\pd)}$, and a (practical) desire that the McKay--Thompson series (\ref{eqn:int-pmo:Fpdgr}) all have integer coefficients. In umbral moonshine the maximal degree of a number field defined by an irreducible character is $1$ or $2$, for every umbral group $G^{(\ell)}$ (\ref{eqn:Gell}). 
This is the reason we can take $C^{(\ell)}(1,1)=-2$ for all umbral lambencies $\ell$ (cf.\ \S~\ref{sec:int-opt}), and avoid irrationalities in the Fourier coefficients (\ref{eqn:int:Hellgr}) of the $H^{(\ell)}_g$. In penumbral moonshine the maximal degree is sometimes larger than $2$. Here we opt to adjust $C^{(\pd)}(D_0,r_0)$ accordingly, rather than making a uniform choice.

Although we cannot offer a comprehensive penumbral counterpart to the role that Niemeier lattices play (\ref{eqn:Gell}), in defining the groups of umbral moonshine (cf.\ \ref{itm:int-pmo:evidence5}.\ above), we can point to a  connection to the monster, at least for $D_0=-3$ and $D_0=-4$, and $m$ square-free. 
For this note the curious fact that $G^{(-3,1)}=\Th$ centralizes an element of order $3$ in the monster. 
We find that this 
connection extends, 
in the sense that $G^{(-4,1)}$, being a group of the form $2.F_4(2).2$ (see (\ref{eqn:pmo-gps:2F422seq})), centralizes an element of order $4$ in the monster,
and for either $D_0=-3$ or $D_0=-4$, the group $G^{(\pd)}$ is 
closely related to the centralizer of an element of order $m$ in $G^{(D_0,1)}$, when $\pd=(D_0,\ell)$ and $m$ is the level of $\ell$. In particular, at least when $D_0$ and $m$ are coprime, the group $G^{(\pd)}$ is closely related to the centralizer of some element of $\MM$.

In the spirit of McKay's original observation on the elliptic modular invariant (\ref{eqn:int-mns:ellmodinv}) and the monster, we pause here to note that the $\pd=(-4,1)$ counterpart to (\ref{eqn:int:F-31}) is
\begin{align}
\begin{split}\label{eqn:int:F-41}
\breve{F}^{(-4,1)}(\tau) &= 
\sum_{D\geq -4} \breve{C}^{(-4,1)}(D)q^D \\
&=  2q^{-4} -492 +2\cdot 142884 q^4 -2\cdot 565760 q^5+2\cdot 18473000 q^8 +\dots,
\end{split}
\end{align}
 and both $142884$ and $565760$ occur as dimensions of irreducible representations of $G^{(-4,1)}$.

We offer two more comments on the nature of the modules $W^{(\pd)}$ (\ref{eqn:int-pmo:Wpd}). Firstly, we have described the homogeneous components of $W^{(\pd)}$ as virtual $G^{(\pd)}$-modules, and similarly for the homogeneous components of the $G^{(\ell)}$-modules (\ref{eqn:int:Kell}) of umbral moonshine, but the reader may be aware that the virtual-ness in the umbral case is of a very special kind. 
Indeed, each $K^{(\ell)}_{r,-\frac{D}{4m}}$ is actually either a module, or $-1$ times a module, depending on $r$ (cf.\ (\ref{eqn:int:umbral_lambency_condition})) and the sign of $D$. A similar statement holds for the modules $W^{(\pd)}_{r,\frac{D}{4m}}$ of Thompson moonshine, where $\pd=(-3,1)$ (but the dependence on $r$ is a bit different 
in this case).
We find (see \S~\ref{sec:mns-mds}) that this practical non-virtual-ness of the modules of Thompson moonshine extends to all cases $\pd=(D_0,\ell)$ of penumbral moonshine with $D_0=-3$, but not, in general for $D_0=-4$.

For a final comment on modules, 
and 
for a conclusion to this introductory exploration of the penumbral moonshine landscape, 
we note that the 
notation (\ref{eqn:int:Kell}), for the modules of umbral moonshine, reflects the 
origin of the original Mathieu moonshine observation (\ref{eqn:int:H2}), in the physics of K3 surfaces. 
(See \cite{MR3366057,MR3832169,MR3465528} for results that relate other cases of umbral moonshine to physical models involving K3 surfaces.)
We are not yet aware of any counterpart geometric or physical aspect to penumbral moonshine, either for $D_0=-3$ or $D_0=-4$.
However, in another companion paper, \cite{pmt}, we develop connections between Thompson moonshine (\ref{eqn:int:F-31}--\ref{eqn:int:varphi31-tht-dcm}) 
and generalized moonshine for the monster (see \cite{Carnahan:2012gx,generalized_moonshine}), and provide evidence that these connections extend to other cases of penumbral moonshine (see also \cite{pmp}).  
Generalized monstrous moonshine is now well-understood at the level of CFTs, VOAs, and Borcherds--Kac--Moody algebras, 
so the results of op.\ cit.\ give us some confidence that penumbral moonshine will ultimately manifest rich algebraic and physical features. 
We also mention \cite{MR3858593}, on account of the fact that it includes physical constructions of skew-holomorphic Jacobi forms, which---although different from those that appear in this work---are involved in similar phenomena.

The organization of the rest of the paper is as follows. In \S~\ref{sec:not} we give a guide to notation, and in \S~\ref{sec:pre} we present a review of 
necessary background material. 
The content of \S~\ref{sec:mns} is as described above. In particular, our main results appear in \S~\ref{sec:mns-mds}.
In \S~\ref{sec:sum} we summarize our results and suggest some open problems for future study. 
The appendices contain comments on $D_0<-4$, and 
data that defines the penumbral McKay--Thompson series $F^{(\pd)}_g$.

%---------------------------------------------------------------------------------------%
\section*{Acknowledgements}
%---------------------------------------------------------------------------------------%

We would like to thank Nathan Benjamin, Scott Carnahan, Miranda Cheng, Sarah Harrison, Shamit Kachru, Michael Mertens, Ken Ono, Natalie Paquette, Roberto Volpato, and Max Zimet for helpful discussions. 
B.R.~acknowledges support from the U.S.\ National Science Foundation (NSF) under grant PHY-1720397. 
J.D.~acknowledges support from the NSF (DMS-1203162, DMS-1601306), and the Simons Foundation (\#316779, \#708354). 
J.H.~acknowledges support from the NSF under grant PHY-1520748, and the hospitality of the Aspen Center for Physics supported by the NSF under grant PHY-1607611. 
This research was also supported in part by the NSF under grant PHY-1748958.

\section{Notation}
\label{sec:not}

\begin{footnotesize}

\begin{list}{}{\itemsep -1pt \labelwidth 23ex \leftmargin 13ex}

\item
[$\sqrt{\cdot}$]
We define $\sqrt{z}:=e^{i\frac\theta2}$ for $z=e^{i\theta}$ where $-\pi<\theta\leq\pi$.

\item
[$n*n'$]
We set $n*n':=\frac{nn'}{\gcd(n,n')^2}$ for integers $n$ and $n'$, not both zero. See (\ref{eqn:pre-mod:nstarnprime}).

\item
[$a(n)$]
The assignment $n\mapsto a(n)$ defines an isomorphism $\Ex_m\to O_m$. 
See (\ref{eqn:pre-jac:an}).

\item
[$C_\vf(D,r)$]
A Fourier coefficient of a 
weakly skew-holomorphic 
Jacobi form $\varphi$. See (\ref{eqn:int:varphiFourierseries}), (\ref{eqn:pre-jac:Fou}).

\item
[$C^{(\pd)}(D,r)$]
A Fourier coefficient of $F^{(\pd)}_{r}$. 
See (\ref{eqn:int-pfm:Fpdr_Fourier}) and (\ref{eqn:mns-fms:Fpdgr}). 

\item
[$C^{(\pd)}_g(D,r)$]
A Fourier coefficient of $F^{(\pd)}_{g,r}$.
See (\ref{eqn:mns-fms:Fpdgr}).

\item
[$C^{m,D_0,r_0}_{n,\chi}(D,r)$]
A Fourier coefficient of $R^{m,D_0,r_0}_{n,\chi,r}$.
See (\ref{rad-fourier}).

\item
[${C}^{(\pd)}_{n,\chi}(D,r)$]
A Fourier coefficient of ${R}^{(\pd)}_{n,\chi}$.
See (\ref{normalized_rademacher_Fou}).

\item
[$D_0$]
A negative discriminant. 
See (\ref{eqn:int:penumbral_optimality}).

\item
[$\ex(\,\cdot\,)$]
We set $\ex(x):=e^{2\pi i x}$.

\item
[$\E_m$]
The space of elliptic forms of index $m$. See (\ref{eqn:pre-jac:thetadecomposition}).

\item
[$\E_m^\a$]
An eigenspace for the action of $\Ex_m$ on $\E_m$. 
See (\ref{eqn:pre-jac:mcEm_decomp}).

\item
[$\Ex_m$]
The group of exact divisors of $m$. Cf.\ (\ref{eqn:pre-jac:an}).

\item
[$F^{(\pd)}$]
A shorthand for $F^{(\pd)}_{g}$ in case $g=e$. 
Cf.\ (\ref{eqn:int-pfm:Fpdr_Fourier}) and (\ref{eqn:int-pmo:Fpdgr}).

\item
[$\breve F^{(\pd)}$]
A shorthand for $\breve F^{(\pd)}_{g}$ in case $g=e$. 
Cf.\ (\ref{eqn:int:F-31}), (\ref{eqn:int:F-41}) and (\ref{eqn:mns-mod:breveFpdg}).

\item
[$F^{(\pd)}_r$]
A shorthand for $F^{(\pd)}_{g,r}$ in case $g=e$. 
See (\ref{eqn:int-pfm:Fpdr_Fourier}) and cf.\ (\ref{eqn:int-pmo:Fpdgr}).

\item
[$F^{(\pd)}_g$]
A McKay--Thompson series arising from
$W^{(\pd)}$. 
Cf.\ (\ref{eqn:int-pmo:Fpdgr}).

\item
[$\breve F^{(\pd)}_g$]
A McKay--Thompson series arising from $\breve W^{(\pd)}$.
See (\ref{eqn:pre-vvf:vector_to_scalar}) and (\ref{eqn:mns-mod:breveFpdg}).

\item
[$F^{(\pd)}_{g,r}$]
A component function of $F^{(\pd)}_g$. 
See (\ref{eqn:int-pmo:Fpdgr}).

\item
[$G^{(\pd)}$]
The finite group attached to a lambdency $\pd$. See Tables \ref{tab:pmo-gps:penumbralgroupsD0m3} and \ref{tab:pmo-gps:penumbralgroupsD0m4}.

\item
[$(\gamma,\upsilon)$]
An element of $\widetilde{\SL}_2(\ZZ)$. 
Cf.\ (\ref{eqn:pre-mod:slashknugammaupsilon}).

\item
[$\Gamma$]
A subgroup of $\SL_2(\RR)$ that is commensurable with $\SL_2(\ZZ)$. Cf.\ (\ref{eqn:pre-mod:XGamma}).

\item
[$\widetilde\Gamma_0(n)$]
The preimage of $\Gamma_0(n)$ under the natural map $\widetilde{\SL}_2(\ZZ)\to \SL_2(\ZZ)$. 
See \S~\ref{sec:pre-vvf}.

\item
[$\Gamma_0(m)$]
The Hecke congruence group of level $m$. 
See (\ref{eqn:int:Gamma0m}).

\item
[$\Gamma_0(m)+K$] 
A subgroup of $\SL_2(\RR)$ that normalizes $\Gamma_0(m)$. See (\ref{eqn:pre-mod:Gamma0m+K}).

\item
[$J^\wsh_{k,m}$]
A shorthand for $J^\wsh_{k,m}(1)$. 
See \S~\ref{sec:pre-jac}.

\item
[$J^{\wsh,\a}_{k,m}$]
For $\a\in \widehat{O}_m$ we set $J^{\wsh,\a}_{k,m}:=J^{\wsh}_{k,m}\cap\E_m^\a$. See (\ref{eqn:pre-jac:Jwshkm_decomp}).

\item
[$J^\wsh_{k,m}(n)$]
A shorthand for $J^\wsh_{k,m}(n,1)$. 
See \S~\ref{sec:pre-jac}.

\item
[$J^\wsh_{k,m}(n,\chi)$]
{A certain space of weakly skew-holomorphic Jacobi forms. See \S~\ref{sec:pre-jac}.} 

\item
[$K$]
A subgroup of $\Ex_m$ for some $m$. See (\ref{eqn:pre-mod:Gamma0m+K}).

\item
[$\ell$]
A lambency (pronounced ``{\sl lam}-ben-see''). 
See (\ref{eqn:int:ell}). 

\item
[$\pd$]
A lambdency (pronounced ``{\sl lam}-den-see'').
See (\ref{eqn:int:lambdency}).

\item
[$L^{(\pd)}$]
A lattice attached to $\pd$, for certain lambdencies $\pd$. See Tables \ref{tab:pmo-gps:penumbralgroupsD0m3} and \ref{tab:pmo-gps:penumbralgroupsD0m4}, and \S~\ref{sec:mns-lts}.

\item
[$m+n,n',\dots$]
A lambency and a shorthand for a discrete subgroup of $\SL_2(\RR)$. See (\ref{eqn:int:ell}) and (\ref{eqn:pre-mod:Gamma0m+K}). 

\item
[$M^{\wh,+}_{k,m}(n,\chi)$] 
A certain space of weakly holomorphic modular forms of half-integral weight. See (\ref{eqn:pre-mod:Mwhpluskmnchi}).

\item
[$O_m$]
The multiplicative group of $a\xmod 2m$ such that $a^2\equiv 1\xmod 4m$. See (\ref{eqn:pre-jac:Om}).

\item
[$\widehat{O}_m$]
The group $\widehat{O}_m:=\hom(O_m,\CC^\ast)$ of characters of $O_m$. Cf.\ (\ref{eqn:pre-jac:mcEm_decomp}).

\item
[$\Omega_m(n)$]
The Omega matrix associated to a divisor $n$ of $m$. See (\ref{Omega_matrices}).

\item
[$\Omega_m^\a$]
A linear combination of Omega matrices associated to a character $\a$ of $O_m$. See (\ref{eqn:pre-jac:OmegamK}).

\item
[$P_m^\a$]
A projection operator on elliptic forms. See (\ref{eqn:pre-jac:projector}).

\item
[$\vf^{(\pd)}$]
A shorthand for $\vf^{(\pd)}_g$ in case $g=e$. 
Cf.\ (\ref{eqn:mns-fms:varphipdg}).

\item
[$\vf^{(\pd)}_g$]
The weakly skew-holomorphic Jacobi form attached to the element $g\in G^{(\pd)}$. 
See (\ref{eqn:mns-fms:varphipdg}).

\item
[$q$]
We set $q:=\ex(\tau)$ for $\tau\in \HH$.

\item
[$R^{m,D_0,r_0}_{n,\chi,r}$]
A Rademacher sum.
See (\ref{rad-fourier}).

\item
[${R}^{(\pd)}_{n,\chi}$]
A normalized Rademacher sum.
See (\ref{normalized_rademacher}--\ref{normalized_rademacher_Fou}).

\item
[$\rho_{n|h}$]
A homomorphism of groups from $\Gamma_0(n)$ to the $h$-th roots of unity in $\CC$. See (\ref{eqn:pre-mod:rhonh}).

\item
[$\varrho_m$]
The Weil representation $\varrho_m:\mpt(\ZZ)\to\GL_{2m}(\CC)$ of index $m$. 
See (\ref{eqn:pre-mod:ind_m_Weil_rep}).

\item
[$\mpt(\ZZ)$]
The metaplectic double cover of $\SL_2(\ZZ)$. 
See \S~\ref{sec:pre-vvf}.

\item
[$\theta_m$]
The vector-valued function whose components are the $\theta_{m,r}$. 
Cf.\ (\ref{eqn:thetamr}).

\item
[$\theta_m^0$]
The vector-valued function whose components are the $\theta_{m,r}^0$. 
Cf.\ (\ref{eqn:pre-vvf:thetaL0}--\ref{eqn:thetamr}).

\item
[$\theta_{m,r}$]
A $2$-variable theta series.
See (\ref{eqn:thetamr}).

\item
[$\theta^{0}_{m,r}$]
A Thetanullwert.
Cf.\ (\ref{eqn:pre-vvf:thetaL0}--\ref{eqn:thetamr}).

\item
[$W_n$]
An Atkin--Lehner involution. See (\ref{eqn:pre-mod:Wn}).

\item
[$W^{(\pd)}$]
A virtual graded $G^{(\pd)}$-module attached to a lambdency $\pd$. 
See (\ref{eqn:int-pmo:Wpd}) and \S~\ref{sec:mns-mds}.

\item
[$\breve W^{(\pd)}$]
A virtual graded $G^{(\pd)}$-module attached to a lambdency $\pd$. 
Cf.\ (\ref{eqn:int:W-31}--\ref{eqn:int:F-31gW-31}) and see (\ref{eqn:mns-mds:breveWpdfromWpd}).

\item
[$\breve W^{(\pd)}_D$]
A homogeneous component of $\breve W^{(\pd)}$. 
See (\ref{eqn:mns-mds:breveWpdfromWpd}).

\item
[$W^{(\pd)}_{r,\frac{D}{4m}}$]
A homogeneous component of $W^{(\pd)}$. 
See (\ref{eqn:int-pmo:Wpd}) and \S~\ref{sec:mns-mds}.

\item
[$\W_m(n)$] 
An Eichler--Zagier operator on $\E_m$. 
See (\ref{eqn:pre-jac:EZop}).

\item[$X_\Gamma$]
The Riemann surface associated to $\Gamma<\SL_2(\RR)$.
See (\ref{eqn:pre-mod:XGamma}). 

\item
[$\chi$] 
A character of $\Gamma_0(n)$ of the form $\chi=\rho_{n|h}^v$, for positive integers $n$, $h$ and $v$.

\item
[$y$]
We set $y:=\ex(z)$ for $z\in\CC$.

\end{list}

\end{footnotesize}

%---------------------------------------------------------------------------------------%
\section{Preliminaries}\label{sec:pre}
%---------------------------------------------------------------------------------------%

In this section we review some preliminary notions. 
We review 
scalar-valued modular 
forms in \S~\ref{sec:pre-mod}, and review vector-valued modular forms in \S~\ref{sec:pre-vvf}. We review skew-holomorphic Jacobi forms in \S~\ref{sec:pre-jac}, and the Eichler--Zagier operators that refine them in \S~\ref{sec:pre-ezo}. Finally, we discuss Rademacher sums in \S~\ref{sec:pre-rad}.

\subsection{Modular Forms}\label{sec:pre-mod}

The {\em modular group} is $\SL_2(\ZZ)$. 
Say that a group $\Gamma<\SL_2(\RR)$ is {\em commensurable} {with the modular group} 
if the intersection $\Gamma\cap\SL_2(\ZZ)$ has finite index in both $\Gamma$ and $\SL_2(\ZZ)$. If $\Gamma$ is such a group, its action on $\HH$, given by $\gamma\tau:=\frac{a\tau+b}{c\tau+d}$ for $\gamma=\left(\begin{smallmatrix} a&b\\c&d\end{smallmatrix}\right)$, extends naturally to 
$\widehat\QQ:=\QQ\cup\{\infty\}$.
(See e.g.\ Proposition 1.10.2 in \cite{Dun_ArthGrpsAffE8Dyn}.)

We will only consider subgroups of $\SL_2(\RR)$ that are commensurable with the modular group in the above sense. 
The particular examples of interest here are the {Hecke congruence subgroups} (\ref{eqn:int:Gamma0m}), and the extensions of these by Atkin--Lehner involutions. To define these extensions, first recall that a positive integer $n$ is said to be an \emph{exact divisor} of $m$ if it is both a divisor of $m$ and also coprime with $\frac{m}{n}$. 
The set of exact divisors of $m$ is denoted $\mathrm{Ex}_m$. 
For each $n\in\mathrm{Ex}_m$, the set 
\begin{align}\label{eqn:pre-mod:Wn}
W_n := 
\left.\left\{\frac{1}{\sqrt{n}}
\left(\begin{matrix}an & b \\ cm & dn \end{matrix}\right)\,\right|\, a,b,c,d\in \ZZ, adn^2-bcm=n       \right\}
\end{align}
is a coset of $\Gamma_0(m)$ in $\SL_2(\RR)$. In particular, $\Gamma_0(m)=W_1$. 
In this work we refer to the cosets $W_n$, and sometimes also their representative elements, as {\em Atkin--Lehner involutions} of $\Gamma_0(m)$. 
These cosets obey the same group law as exact divisors of $m$, namely $W_n W_{n'} = W_{n\ast n'}$ for $n,n'\in\Ex_m$,
where 
\begin{gather}\label{eqn:pre-mod:nstarnprime}
	n*n':=\frac{nn'}{\gcd(n,n')^2}.
\end{gather}
In particular, $W_n^2 = W_1=\Gamma_0(m)$ for $n\in \Ex_m$. 
As we have mentioned in \S~\ref{sec:int-mot}, the coset $W_m$ is represented by the {Fricke involution} (\ref{eqn:int:Fricke}). 

To any subgroup $K=\{1,n,n'\dots\}$ of $\mathrm{Ex}_{m}$ 
we can attach an {\em Atkin--Lehner extension} of $\Gamma_0(m)$ 
by setting 
\begin{align}\label{eqn:pre-mod:Gamma0m+K}
\Gamma_0(m) +K:=
\bigcup_{n\in K} W_n.
\end{align}
We say that $\Gamma_0(m)+K$ is {\em Fricke} or {\em non-Fricke} according as $K$ contains $m$ or not. 
The subgroup 
of $\SL_2(\RR)$ represented by a lambency $\ell=m+n,n',\dots$ (\ref{eqn:int:ell}) is none other than  $\Gamma_0(m)+K$ (\ref{eqn:pre-mod:Gamma0m+K}), where $K=\{1,n,n',\dots\}$.

Of particular importance in this work, and moonshine more generally, are the $\Gamma_0(m)+K$ which have genus zero. To define the genus of a group $\Gamma<\SL_2(\RR)$ that is commensurable with the modular group note that
\begin{gather}\label{eqn:pre-mod:XGamma}
X_\Gamma:=
\Gamma\backslash \HH\cup\widehat \QQ
\end{gather}
naturally admits the structure of a compact Riemann surface (see e.g.\ \cite{Shi_IntThyAutFns}).
The {\em genus} of $\Gamma$ is just the genus of $X_\Gamma$ as a surface. So $\Gamma$ has genus zero if $X_\Gamma$ is isomorphic to the Riemann sphere 
$\widehat\CC=\CC\cup\{\infty\}$.
There are only finitely many Atkin--Lehner extensions $\Gamma_0(m)+K$ with genus zero. These are classified in \cite{Fer_Genus0prob}. See Table \ref{tab:pmo-gps:FrickeGenusZero} for a list of the Fricke cases.

Modular forms for $\Gamma$ are functions on $\HH$ that define sections of line bundles on $X_\Gamma$ (\ref{eqn:pre-mod:XGamma}). 
To put this concretely let $k\in \frac12\ZZ$, let $\nu$ be a $\CC$-valued function on $\Gamma$, 
and for $\gamma\in \SL_2(\RR)$ define
\begin{gather}\label{eqn:pre-mod:slashaction}
	(f|_{k,\nu}\gamma)(\tau):=\left(\sqrt{c\tau+d}\right)^{-2k}\nu(\gamma)f(\gamma\tau)
\end{gather}
in case $\gamma=\left(\begin{smallmatrix}*&*\\c&d\end{smallmatrix}\right)$, 
where $\sqrt{re^{2\pi i t}}:=\sqrt{r}e^{\pi i t}$ for $r>0$ and $0<t<\pi$.
Then a {\em weakly holomorphic modular form} of weight $k\in \frac12\ZZ$ for $\Gamma$ with multiplier $\nu$ 
is a holomorphic function $f:\HH\to \CC$ such that $f|_{k,\nu}\gamma=f$ for all $\gamma\in \Gamma$, 
and such that for every $\gamma\in \SL_2(\ZZ)$ there is a constant $C>0$ such that $(f|_{k}\gamma)(\tau) = O(e^{C\Im(\tau)})$ as $\Im(\tau) \to \infty$. Here we write $f|_{k}\gamma$ for 
$f|_{k,\nu}\gamma$ when $\nu\equiv 1$ is trivial. A weakly holomorphic  modular form is called a {\em holomorphic} modular form if it is bounded near the cusps of $\Gamma$. That is, a holomorphic modular form should satisfy $(f|_k\gamma)(\tau)=O(1)$ as $\Im(\tau)\to \infty$, for each $\gamma\in \SL_2(\ZZ)$. A holomorphic modular form $f$ is called {\em cuspidal} if $f|_k\gamma$ vanishes as $\Im(\tau)\to \infty$, for each $\gamma\in \SL_2(\ZZ)$. 

A weakly holomorphic modular form of weight $0$ with trivial multiplier for $\Gamma$ naturally defines a morphism of Riemann surfaces $X_\Gamma\to\widehat{\CC}$ (cf.\ (\ref{eqn:pre-mod:XGamma})). 
Such a weakly holomorphic modular form is called a {\em principal modulus} or {\em hauptmodul} for $\Gamma$ if the corresponding map $X_\Gamma\to \widehat{\CC}$ is an isomorphism. So principal moduli exist only for genus zero groups. A principal modulus $f$ for $\Gamma$ is called {\em normalized} if $f(\tau)=q^{-\frac1h}+O(q^{\frac1h})$ as $\Im(\tau)\to \infty$, where $q=\ex(\tau)$ and where $h$ is the smallest rational number such that $\Gamma$ contains $\left(\begin{smallmatrix}1&h\\0&1\end{smallmatrix}\right)$. The McKay--Thompson series (\ref{eqn:int:Tg}) of monstrous moonshine are normalized principal moduli---in particular, the elliptic modular invariant (\ref{eqn:int-mns:ellmodinv}) is the normalized principal modulus for $\SL_2(\ZZ)$---and it is in this sense that they are organized by genus zero groups.

In this work we are only interested in weakly holomorphic modular forms for groups $\Gamma$ that contain some Hecke congruence group $\Gamma_0(n)$. We say that a weakly holomorphic modular form for such a group $\Gamma$ has {\em level} $n$ if $n$ is the smallest positive integer for which this containment holds.

Other than modular forms with trivial multiplier, we will also be interested in modular forms that lie in Kohnen plus spaces of half-integral weight \cite{MR575942,MR660784}. To explain what these are, 
let $\psi_0:\Gamma_0(4)\to\mathbb{C}^\ast$ be the 
function defined by setting
\begin{align}\label{eqn:pre-mod:psi0}
\psi_0(\gamma) = \left(\frac{c}{d}\right)\epsilon_d,
\end{align}
for $\gamma=\left(\begin{smallmatrix} *&*\\c& d\end{smallmatrix}\right)$, where $\left(\frac{c}{d}\right)$ is the Kronecker symbol (see e.g.\ p.503 of \cite{GKZ87}), and $\epsilon_d$ is $1$ or $i$ according as $d\equiv 1\xmod 4$ or $d\equiv 3\xmod 4$. 
Next, for $k\in \ZZ+\frac12$ and $M$ a positive integer, suppose 
that $f$ is a weakly holomorphic modular form of weight $k$ for $\Gamma_0(4M)$, with multiplier $\nu=\psi_0$. 
Then $f$ admits a {\em Fourier series expansion} of the form
\begin{gather}\label{eqn:pre-svf:FourierSeries}
	f(\tau)=\sum_n c_f(n)q^n
\end{gather}
(where $q=\ex(\tau)$). 
If $k\in 2\ZZ+\frac12$ then the function $f$ in (\ref{eqn:pre-svf:FourierSeries}) is said to be a weakly holomorphic modular form of weight $k$ in the {\em Kohnen plus space} for $\Gamma_0(4M)$ if $c_f(n)=0$ whenever $n$ is not a square modulo $4$, whereas if $k\in 2\ZZ-\frac12$ then the condition on exponents is that $c_f(n)=0$ whenever $-n$ is not a square modulo $4$.

In this work we will be interested in certain refinements of this notion. To detail these we utilize the function $\rho_{n|h}$, defined for $\gamma\in \Gamma_0(n)$ by setting
\begin{gather}\label{eqn:pre-mod:rhonh}
\rho_{n|h}(\gamma) := \ex\left(-\frac{cd}{nh}\right)
\end{gather}
in case $\gamma=\left(\begin{smallmatrix}*&*\\c&d\end{smallmatrix}\right)$. (Cf.\ \S~2 of \cite{MR3021323}.) 
Now $\rho_{n|h}$ is trivial when restricted to $\Gamma_0(nh)$, so given positive integers $M$ and $h$, it is natural to consider those weakly holomorphic modular forms of weight $k$ in the Kohnen plus space for $\Gamma_0(4Mh)$ that satisfy $f|_{k,\nu}\gamma=f$ 
for $\gamma\in \Gamma_0(4M)$, where now $\nu=\psi_0\rho_{4M|h}^v$ for some $v$. We call such a function a weakly holomorphic modular form of weight $k$ in the Kohnen plus space for $\Gamma_0(4M)$ with {\em character} $\rho_{4M|h}^v$. In practice, we will have $M=mn$ where $m$ is the level of a lambency $\ell$ (\ref{eqn:int:ell}), and $n$ is the order of an element in a finite group. 
Taking $\chi=\rho_{4mn|h}^v$, we will use
\begin{gather}\label{eqn:pre-mod:Mwhpluskmnchi}
M_{k,m}^{\wh,+}(n,\chi)
\end{gather}
to denote the space of weakly holomorphic modular forms $f$ in the Kohnen plus space of weight $k$ for $\Gamma_0(4mn)$ with character $\chi$, whose Fourier coefficients $c_f(n)$ vanish unless $(-1)^{k-\frac12}n$ is a square modulo $4m$. 

Holomorphic and cuspidal modular forms in the Kohnen plus spaces are defined via growth conditions at cusps, just as for modular forms in general.

\subsection{Vector-Valued Modular Forms}\label{sec:pre-vvf}

In \S~\ref{sec:int} the McKay--Thompson series of penumbral moonshine have been described as weakly holomorphic vector-valued modular forms of half-integral weight. To explain this notion in more detail it is convenient to work with the {\em metaplectic group} $\widetilde{\SL}_2(\ZZ)$, which we may define to be the double cover of $\SL_2(\ZZ)$ whose elements are the pairs $(\gamma,\upsilon)$, where $\gamma\in \SL_2(\ZZ)$ and $\upsilon:\HH\to \CC$ is either of the two holomorphic functions that satisfy $\upsilon(\tau)^2=c\tau+d$ in case $\gamma=\left(\begin{smallmatrix}*&*\\c&d\end{smallmatrix}\right)$. The multiplication rule is $(\gamma,\upsilon)(\gamma',\upsilon')=(\gamma'',\upsilon'')$, where $\gamma'':=\gamma\gamma'$ and $\upsilon''(\tau):=\upsilon(\gamma'\tau)\upsilon'(\tau)$. 

For $n$ a positive integer 
we reserve the notation $\widetilde{\Gamma}_0(n)$ for the inverse image of $\Gamma_0(n)$ under the natural map $\widetilde{\SL}_2(\ZZ)\to\SL_2(\ZZ)$. 
Also, we set 
\begin{gather}\label{eqn:pre-vvf:tildeStildeT}
\widetilde{S}
:=\left(\left(\begin{smallmatrix}0&-1\\1&0\end{smallmatrix}\right),\sqrt{\tau}\right),\quad
\widetilde{T}
:=\left(\left(\begin{smallmatrix}1&1\\0&1\end{smallmatrix}\right),1\right).
\end{gather} 
Then $\widetilde{S}$ and $\widetilde{T}$ serve as generators for $\widetilde{\SL}_2(\ZZ)$.

For $n$ a positive integer, for $k\in \frac12\ZZ$, and $\nu$ a $\GL(V)$-valued function on $\widetilde\Gamma_0(n)$, 
where $V$ is a complex vector space, a holomorphic function $f:\HH\to V$ is called a {\em weakly holomorphic vector-valued modular form} of weight $k$ and level $n$ with multiplier system $\nu$, 
if $f|_{k,\nu}(\gamma,\upsilon)=f$ for all $(\gamma,\upsilon)\in \widetilde\Gamma_0(n)$, where now
\begin{gather}\label{eqn:pre-mod:slashknugammaupsilon}
	(f|_{k,\nu}(\gamma,\upsilon))(\tau):=\upsilon(\tau)^{-2k}\nu(\gamma,\upsilon)f(\gamma\tau)
\end{gather} 
(cf.\ (\ref{eqn:pre-mod:slashaction})), and if for any $(\gamma,\upsilon)\in \widetilde{\SL}_2(\ZZ)$ there is a $C>0$ such that $\alpha((f|_{k}(\gamma,\upsilon))(\tau))=O(e^{C\Im(\tau)})$ as $\Im(\tau)\to \infty$, for any linear functional $\alpha:V\to \CC$. 
As before we write $f|_{k}(\gamma,\upsilon)$ for the action (\ref{eqn:pre-mod:slashknugammaupsilon}) when $\nu\equiv 1$.
{Holomorphic} vector-valued modular forms and {cuspidal} vector-valued modular forms are defined as in the scalar-valued case (discussed in \S~\ref{sec:pre-mod}), replacing the exponential growth condition with boundedness and vanishing, respectively. 

Of increasing importance in moonshine are vector-valued modular forms for which the multiplier system is the {Weil representation} associated to an even lattice, or the dual of this. 
To define this notion let $L$ be an even lattice,
let $(b^+,b^-)$ be its signature, 
and let $L^\ast$ denote the dual of $L$. 
(For background on lattices see e.g.\ \cite{MR1662447} for a lot, or \S~\ref{sec:mns-lts} for a little.) 
Then the {\em Weil representation} 
attached to $L$ is the unitary representation
$\varrho_L : \widetilde{\SL}_2(\mathbb{Z}) \to \GL(\mathbb{C}[L^\ast/L])$ 
defined by requiring that
\begin{align}\label{weil-rep}
\varrho_L\big(\widetilde{T}\big)e_\gamma = 
\ex\left(\tfrac12{(\gamma,\gamma)}\right)
e_\gamma, \quad
\varrho_L\big(\widetilde{S}\big) e_\gamma = 
\frac{
\ex\left(\frac18{(b^--b^+)}\right)
}{\sqrt{|L^\ast/L|}}\sum_{\delta\in L^\ast/L}
\ex\left(-(\gamma,\delta)\right)
e_\delta,
\end{align}
where $(\cdot\,,\cdot)$ is the symmetric bilinear form of $L$, and $\{e_\gamma\}_{\gamma\in L^\ast/L}$ is the natural basis for the group ring $\mathbb{C}[L^\ast/L]$. 

Here we will be concerned with the case that $(b^+,b^-)=(1,0)$, so that $L=\sqrt{2m}\mathbb{Z}$ for some positive integer $m$. 
For such $L$ we call $\varrho_L$ the {\em index $m$ Weil representation},
write $\varrho_m$ in place of $\varrho_{L}$, and identify $L^*/L$ with $\ZZ/2m\ZZ$ in the natural way.
Then
(\ref{weil-rep})
reduces to 
\begin{align}\label{eqn:pre-mod:ind_m_Weil_rep}
\varrho_m\big(\widetilde{T}\big)e_r = 
\ex\left(\frac{r^2}{4m}\right)
e_r, \quad
\varrho_m\big(\widetilde{S}\big)e_r= 
\frac{
\ex\left(-\frac18\right)
}{\sqrt{2m}}\sum_{s\xmod{2m}} 
\ex\left(-\frac{rs}{2m}\right)
e_s,
\end{align}
for $r$ defined modulo $2m$. 
In practice it can be useful to have explicit expressions for Weil representation matrix elements associated to general elements of the metaplectic group, especially when working with its subgroups. Computationally efficient expressions can be found in \cite{MR3101818}.

It is possible to pass from vector-valued modular forms for a Weil representation (or the dual of such a thing), to scalar-valued modular forms, at the price of an increase in level. In this work are interested in the case that $f=(f_r)$ is a vector-valued modular form for the restriction to $\widetilde{\Gamma}_0(n)$ of the dual of the index $m$ Weil representation, for some $m$. Then 
\begin{align}\label{eqn:pre-vvf:vector_to_scalar}
\breve{f}(\tau) := \sum_{r\xmod 2m}f_r(4m\tau)
\end{align}
is a scalar-valued modular form of level $4mn$. Moreover, $\breve f$ belongs to a Kohnen plus space (cf.\ \S~\ref{sec:pre-mod}).
Specifically, a weakly holomorphic vector-valued modular form of weight $\frac12$, level $n$, and multiplier system $\varrho_m^{-1}\rho_{n|h}^v$ is sent under the map \eqref{eqn:pre-vvf:vector_to_scalar} to a function in the space $M_{\frac12,m}^{\wh,+}(n,\rho_{4mn|h}^v)$ (see (\ref{eqn:pre-mod:Mwhpluskmnchi})).

Note that the lattice $L$ naturally furnishes an example of a vector-valued modular form with multiplier system $\varrho_L^{-1}$. Indeed, if $\theta^0_\gamma(\tau):=\sum_{\lambda\in \gamma+L}q^{\frac12(\lambda,\lambda)}$ denotes the theta series naturally attached to the coset $\gamma\in L^*/L$ 
then 
\begin{gather}\label{eqn:pre-vvf:thetaL0}
\theta_L^0
:=
(\theta^0_\gamma)
=
\sum_{\gamma\in L^*/L}\theta^0_\gamma e_\gamma
\end{gather}
is a vector-valued modular form of weight $k=\frac12(b^-+b^+)$ for $\widetilde{\SL}_2(\ZZ)$ with multiplier $\varrho_L^{-1}$. 

We reserve the notation $\theta^0_{m,r}$ for $\theta^0_\gamma$ when 
$L=\sqrt{2m}\ZZ$
and $r\in \ZZ/2m\ZZ$ corresponds to $\gamma\in L^*/L$, 
and write 
$\theta^0_m:=(\theta^0_{m,r})$ 
for the vector-valued function that takes the {\em Thetanullwerte} $\theta^0_{m,r}$ 
as its components.
Thus $\theta_m^0$
is a vector-valued modular form of weight $\frac12$ for $\widetilde{\SL}_2(\ZZ)$ with multiplier $\varrho^{-1}_m=\varrho^*_m$.
A result of Serre and Stark \cite{MR0472707} states that any holomorphic modular form of weight $\frac12$ is a linear combination of Thetanullwerte $\theta^0_{m,r}$.

Note that the {Thetanullwert} $\theta^0_{m,r}$ is the specialization to $z=0$ of the two variable theta series
\begin{gather}\label{eqn:thetamr}
	\theta_{m,r}(\tau,z):=\sum_{s \equiv r\xmod{2m}} q^{\frac{s^2}{4m}}y^{s},
\end{gather}
where $y=\ex(z)$. We write $\theta_m=(\theta_{m,r})$ for the $2$-variable vector-valued function whose $r$-th component is $\theta_{m,r}$.

\subsection{Jacobi Forms}\label{sec:pre-jac}

We now review the notion of 
skew-holomorphic Jacobi form in more detail. 
To begin say that a smooth function $\varphi:\mathbb{H}\times\mathbb{C}\to\mathbb{C}$ is an {\em elliptic form} of index $m$ if $z\mapsto\varphi(\tau,z)$ is holomorphic for every $\tau\in \mathbb{H}$, and if $\varphi$ is invariant under the index $m$ {\em elliptic action} of $\mathbb{Z}^2$ on functions $\HH\times\CC\to\CC$, 
\begin{align}
(\varphi\vert_m(\lambda,\mu))(\tau,z):= \ex(m\lambda^2\tau+2m\lambda z)\varphi(\tau,z+\lambda\tau+\mu), \ \ \ \ (\lambda,\mu)\in\mathbb{Z}^2.
\end{align}
Write $\mathcal{E}_m$ for the space of index $m$ elliptic forms. Any $\varphi\in\mathcal{E}_m$ admits a {\em theta-decomposition}
\begin{align}\label{eqn:pre-jac:thetadecomposition}
\varphi(\tau,z) = \sum_{r\xmod{2m}} h_r(\tau)\theta_{m,r}(\tau,z)
\end{align}
where $\theta_{m,r}$ is as in (\ref{eqn:thetamr}), and the smooth functions $h_r$ are called the {\em theta-coefficients} of $\varphi$. 
So in particular, the $\theta_{m,r}$ are invariant for the index $m$ elliptic action. 
Concretely, 
we have
\begin{align}\label{eqn:pre-jac:thetamindexmWeil}
\theta_m(\tau,z)=
\upsilon(\tau)^{-1}
\ex\left(-\frac{cmz^2}{c\tau+d}\right)
\varrho_m^*(\gamma,\upsilon)
\theta_m\left(\frac{a\tau+b}{c\tau+d},\frac{z}{c\tau+d}\right)
\end{align}
for $(\gamma,\upsilon)\in \widetilde{\SL}_2(\ZZ)$, 
where 
$\gamma=\left(\begin{smallmatrix}a&b\\c&d\end{smallmatrix}\right)$,
and $\varrho_m^*$ denotes the dual of the representation $\varrho_m$ defined by (\ref{eqn:pre-mod:ind_m_Weil_rep}).
This extends the statement just made (see (\ref{eqn:thetamr})), that the {Thetanullwerte} 
$\theta_{m,r}^0$ comprise the components of a vector-valued modular form 
of weight $\frac12$ for the dual  of the index $m$ Weil representation.

Now for $k\in \ZZ$, and for positive integers $n$, $h$ and $v$, set $\chi=\rho_{n|h}^v$ (see (\ref{eqn:pre-mod:rhonh})), and 
define the weight $k$ and character $\chi$ 
{\em skew-modular} action of $\Gamma_0(n)$ on elliptic functions of index $m$ as 
\begin{align}
\label{eqn:pre-jac:modskewmodaction}
(\varphi\vert^{\mathrm{sk}}_{k,m,\chi}\gamma)(\tau,z) 
:=
(c\tau+d)^{-k}\frac{c\bar\tau+d}{|c\tau+d|}
\ex\left(-\frac{cmz^2}{c\tau+d}\right)
\chi(\gamma)
\varphi\left(\frac{a\tau+b}{c\tau+d},\frac{z}{c\tau+d}\right),
\end{align}
when $\gamma=\left(\begin{smallmatrix}a&b\\c&d\end{smallmatrix}\right)\in\Gamma_0(n)$. Then a {\em weakly skew-holomorphic Jacobi form} of weight $k$, index $m$, level $n$, and character $\chi=\rho_{n|h}^v$ is an elliptic function $\varphi\in\mathcal{E}_m$ that has anti-holomorphic theta-coefficients, is invariant for the weight $k$ and character $\chi$ skew-modular action (\ref{eqn:pre-jac:modskewmodaction}), and is such that the theta-coefficients (see (\ref{eqn:pre-jac:thetadecomposition})) satisfy an exponential growth condition as in \S~\ref{sec:pre-mod}, near each cusp. We denote the space of such functions by $J^{\wsh}_{k,m}(n,\chi)$. 
Note that we suppress the character $\chi$ from notation when it is trivial, and write $J^\wsh_{k,m}$ as a shorthand for $J^\wsh_{k,m}(1)$. In the case that $\vf$ is a weakly skew-holomorphic Jacobi form,
for the theta-decomposition (\ref{eqn:pre-jac:thetadecomposition}) we write
\begin{gather}\label{eqn:pre-jac:thetadecomposition_wsh}
	\vf(\tau,z)=\sum_{r\xmod 2m}\overline{f_r(\tau)}\theta_{m,r}(\tau,z)
\end{gather}
(cf.\ (\ref{eqn:int:skw-hlm-tht-dcm})), since in this situation the theta-coefficients are anti-holomorphic by definition.

It follows from the definitions that, for $s$ an integer, the assignment $\varphi(\tau,z)\mapsto\varphi(\tau,sz)$ defines an injective map 
\begin{gather}\label{eqn:pre-jac:ztosz}
J^\wsh_{k,m}(n)\to J^\wsh_{k,ms^2}(n).
\end{gather} 
Taking $n=1$ in (\ref{eqn:pre-jac:ztosz}) we see that the index $m'$ Weil representation of $\widetilde{\SL}_2(\ZZ)$ contains a copy of the index $m=\frac{m'}{s^2}$ Weil representation in case $m'$ is divisible by a perfect square $s^2$.
Another consequence of the definitions is the fact that 
$\varphi(\tau,z)\mapsto \varphi(M\tau,z)$, for $M$ a positive integer, defines an injection 
\begin{gather}\label{eqn:pre-jac:tautostau}
		J^\wsh_{k,mM}(n)
		\to J^\wsh_{k,m}(Mn).
\end{gather}
Taking $n=1$ in 
(\ref{eqn:pre-jac:tautostau}) we obtain a way to apply statements about weakly skew-holomorphic Jacobi forms of level $1$ to those of higher level. (This will manifest in \S\S~\ref{sec:mns-gps}--\ref{sec:mns-fms}. See in particular Tables \ref{tab:mns-fms:mltrln_m3} and \ref{tab:mns-fms:mltrln_m4}.)

From (\ref{eqn:pre-jac:thetamindexmWeil}--\ref{eqn:pre-jac:thetadecomposition_wsh}) we see that 
the complex conjugates of the theta-coefficients of a weakly skew-holomorphic Jacobi form of weight $k$ and index $m$ comprise the components of a weakly holomorphic vector-valued modular form of weight $k-\frac12$ for the dual of the index $m$ Weil representation (\ref{eqn:pre-mod:ind_m_Weil_rep}).
Also, we see that 
a weakly skew-holomorphic Jacobi form $\vf\in J^\wsh_{k,m}(n,\chi)$ admits a Fourier series expansion
\begin{gather}\label{eqn:pre-jac:Fou}
	\vf(\tau,z)=\sum_{\substack{D,s\in\ZZ\\D\equiv s^2\xmod 4m}}C_\vf(D,s)\bar{q}^{\frac{D}{4m}}q^{\frac{s^2}{4m}}y^s,
\end{gather}
with $C_\vf(D,s)=0$ for $D\ll 0$. 

Note that an application of the skew-modular action (\ref{eqn:pre-jac:modskewmodaction}) with $\gamma=-I$ yields 
\begin{gather}\label{eqn:pre-jac:varphitauminusz}
\vf(\tau,-z)=(-1)^{k+1}\vf(\tau,z)
\end{gather} 
for $\vf\in J^\wsh_{k,m}(n,\chi)$, since $\chi(-I)=1$ when $\chi$ takes the form $\chi=\rho_{n|h}^v$ (see (\ref{eqn:pre-mod:rhonh})).
We also have $\theta_{m,r}(\tau,-z)=\theta_{m,-r}(\tau,z)$ (cf.\ (\ref{eqn:thetamr})), so it follows that
\begin{gather}\label{eqn:pre-jac:fminusr}
	f_{-r}=(-1)^{k+1}f_r
\end{gather}
for $f_r$ as in (\ref{eqn:pre-jac:thetadecomposition_wsh}).

\subsection{Eichler--Zagier Operators}\label{sec:pre-ezo}

For each divisor $n$ of $m$ we may consider the \emph{Eichler--Zagier operator} $\mathcal{W}_m(n)$, which acts on $\mathcal{E}_m$ according to the rule
\begin{align}\label{eqn:pre-jac:EZop}
(\varphi\vert\mathcal{W}_m(n))(\tau,z) := \frac{1}{n}\sum_{a,b=0}^{n-1}\ex\left(m\left(\frac{a^2}{n^2}\tau+2\frac{a}{n}z+\frac{ab}{n^2}\right)\right)\varphi\left(\tau,z+\frac{a}{n}\tau+\frac{n}{n}\right).
\end{align}
In terms of theta-coefficients (\ref{eqn:pre-jac:thetadecomposition}), the action of the Eichler--Zagier operators is $h\mapsto \Omega_m(n)h$, where $h=(h_r)$, and where $\Omega_m(n)$ is a $2m\times 2m$ \emph{Omega matrix}. The Omega matrices commute with the Weil representation 
and have entries given by
\begin{align}\label{Omega_matrices}
\Omega_m(n)_{r,s} := \begin{cases} 1 & \text{if }r=-s\xmod{2}n \text{ and }r=s\xmod\frac{2m}n, \\
0 & \text{otherwise}.
\end{cases}
\end{align}

In the case that $n$ is an exact divisor of $m$ we may alternatively express the action (\ref{eqn:pre-jac:EZop}) as follows. 
Define 
\begin{gather}\label{eqn:pre-jac:Om}
O_m:=\left\{a\xmod 2m\mid a^2\equiv 1 \xmod 4m\right\},
\end{gather}
regard $O_m$ as a group under multiplication, and note that we obtain an isomorphism from the group $\Ex_m$ of exact divisors of $m$ (cf.\ (\ref{eqn:pre-mod:nstarnprime})) to $O_m$, to be denoted $n\mapsto a(n)$, by requiring that $a(n)$ is the unique integer modulo $2m$ such that
\begin{gather}
	\label{eqn:pre-jac:an}
	a(n)
	\equiv 
	-1\xmod 2n,\quad
	a(n)
	\equiv 
	1\xmod \frac {2m}n.	
\end{gather}
Then for $n$ an exact divisor of $m$, for $\varphi$ an elliptic form of index $m$, and for $h_r$ as in (\ref{eqn:pre-jac:thetadecomposition}) we have 
\begin{gather}\label{eqn:pre-jac:Wmnan}
\varphi|\mc{W}_m(n) = \sum_r h_{ra(n)}\theta_{m,r}.
\end{gather}

Note also that, when restricted to exact divisors of $m$, the Eichler--Zagier operators (\ref{eqn:pre-jac:EZop}), and also the Omega matrices (\ref{Omega_matrices}), satisfy the defining relations
\begin{gather}\label{eqn:pre-jac:WmnWmnprime}
	\mc{W}_m(n)\mc{W}_m(n')=\mc{W}_m(n * n'),\\
	\Omega_m(n)\Omega_m(n')=\Omega_m(n * n'),
\end{gather}
(cf.\ (\ref{eqn:pre-mod:nstarnprime})) of the Atkin--Lehner operators (\ref{eqn:pre-mod:Wn}).

In \S~\ref{sec:int-thm} we explained a way (see (\ref{eqn:int:franequalsfr})) in which lambencies may be used to refine spaces of weakly skew-holomorphic Jacobi forms. We now use the Eichler--Zagier operators (\ref{eqn:pre-jac:EZop}--\ref{Omega_matrices}) to perform a closely related task. For this let $m$ be a positive integer, let $\a:O_m\to \CC^*$ be a character of $O_m$ (\ref{eqn:pre-jac:Om}), and let 
$\mc{E}_m^\a$ be the subspace of 
$\mc{E}_m$ composed of forms $\varphi$ such that $\varphi|\mc{W}_m(n) = \a(a(n))\varphi$ for each $n\in \Ex_m$, where $a(n)$ is as in (\ref{eqn:pre-jac:an}). 
Then, writing $\widehat{O}_m$ for the group of characters of $O_m$, we have
\begin{gather}\label{eqn:pre-jac:mcEm_decomp}
	\mc{E}_{m} = \bigoplus_{\a\in\widehat{O}_m} \mc{E}^{\a}_{m}.
\end{gather}
Now set $J^{\wsh,\a}_{k,m}:= J^\wsh_{k,m}\cap \mc{E}^\a_m$. Then it follows from (\ref{eqn:pre-jac:mcEm_decomp}) that we have
\begin{gather}\label{eqn:pre-jac:Jwshkm_decomp}
	J^\wsh_{k,m} = \bigoplus_{\a\in\widehat{O}_m} J^{\wsh,\a}_{k,m}.
\end{gather}

Taking theta-coefficients on both sides of (\ref{eqn:pre-jac:Jwshkm_decomp}) we obtain a statement about vector-valued modular forms for the (dual of the) index $m$ Weil representation. Namely, the characters $\a\in \widehat{O}_m$ define forms that are governed by subrepresentations. It develops that these subrepresentations are irreducible if $m$ is square-free (see e.g.\ \cite{MR2512363}). The existence of the injective map (\ref{eqn:pre-jac:ztosz}) shows how it is that this statement fails when $m$ is not square-free (cf. \S~3.4 of \cite{MR4127159}).

Note that half of the summands in (\ref{eqn:pre-jac:Jwshkm_decomp}) actually vanish. To see this take
$m=n$ in (\ref{eqn:pre-jac:an}--\ref{eqn:pre-jac:Wmnan}) so as to obtain $\vf|\mc{W}_m(m)=\sum_{r\xmod 2m}\overline{f}_{-r}\theta_{m,r}$ for $\vf\in J^\wsh_{k,m}$ as in (\ref{eqn:pre-jac:thetadecomposition_wsh}).
Then from (\ref{eqn:pre-jac:fminusr})
we have that
\begin{gather}\label{eqn:pre-jac:phiWm}
	\vf|\mc{W}_m(m)=(-1)^{k+1}\vf
\end{gather}
for $\varphi\in J^\wsh_{k,m}$. 
Now say that $\alpha\in \widehat{O}_m$ is {\em even} or {\em odd} according as $\alpha(-1)=1$ or $\alpha(-1)=-1$. Then from (\ref{eqn:pre-jac:phiWm}) we conclude that the space $J^{\wsh,\alpha}_{k,m}$ is trivial unless $\alpha$ and $k$ have opposite parity.
In particular, weakly skew-holomorphic Jacobi forms of weight $1$, which are a focus of this work, are necessarily invariant under $\mc{W}_m(m)$. (Cf.\ (\ref{eqn:int:fminusrequalsfr}).)

It will be useful in what follows to consider the projection operators 
$P_m^\a:\mc{E}_m\to\mc{E}_m^\a$.
We will describe them explicitly by defining matrices $\Omega_m^{\alpha}$ that implement the projection at the level of theta-coefficients, 
according to the rule
\begin{align}\label{eqn:pre-jac:projector}
P_m^{\alpha}(h^{\rm t} \theta_{m}) = h^{\rm t}  \Omega_m^{\alpha} \theta_{m}.
\end{align}
(Here $h^{\rm t}$ is the row-vector form of $h=(h_r)$.)
This requirement (\ref{eqn:pre-jac:projector}) does not uniquely determine $\Omega_m^{\alpha}$, but it does if we further demand that $(\Omega_m^{\alpha})_{r,s}=(\Omega_m^{\alpha})_{r,-s}=(\Omega_m^{\alpha})_{-r,s}$. On general grounds, these matrices commute with the Weil representation and therefore must be expressible as a linear combination of the $\Omega_m(n)$ (\ref{Omega_matrices}). Concretely, we have
\begin{align}\label{eqn:pre-jac:OmegamK}
\Omega_m^{\alpha} =
\frac1{\#\Ex_m}\sum_{n\in \Ex_m}\a(a(n))\Omega_m(n).
\end{align}

\subsection{Rademacher Sums}\label{sec:pre-rad}

In \S~\ref{sec:mns-fms} we will use Rademacher sums to specify the McKay--Thompson series of penumbral moonshine at $D_0=-3$. We explain the details of this construction here.
The formulae we present are modest generalizations of expressions that have appeared in various places in the literature, e.g.\ \cite{MR3559204,DunFre_RSMG,2014arXiv1406.0571W}. 

Here the data of a Rademacher sum is as follows.
\begin{enumerate}
\item A positive integer $m$, the index.
\item A pair $(D_0,r_0)$ consisting of a negative discriminant $D_0$ and an integer $r_0$ defined modulo $2m$ which satisfies $D_0\equiv r_0^2\xmod{4}m$. This specifies the singular behavior of the Rademacher sum as $\tau$ approaches the cusp at infinity.
\item A positive integer $n$, the level.
\item A character $\chi:\Gamma_0(n)\to\mathbb{C}^\ast$ of the form $\chi=\rho_{n|h}^v$ (see (\ref{eqn:pre-mod:rhonh})), which contributes to the multiplier system.
\end{enumerate}

To this data, we (formally) attach a Rademacher sum $R^{m,D_0,r_0}_{n,\chi}$. 
When it converges it defines a vector-valued mock modular form of weight $\frac12$ which furnishes the theta-coefficients of a weakly skew-holomorphic mock Jacobi form. In all the cases of interest to us it will actually be modular, belonging to one of the spaces ${J}_{k,m}^{\wsh}(n,\chi^\ast)$.
We will define this vector-valued function 
$R^{m,D_0,r_0}_{n,\chi}=(R^{m,D_0,r_0}_{n,\chi,r})$ 
through its Fourier development, which takes the form
\begin{align}\label{rad-fourier}
R^{m,D_0,r_0}_{n,\chi,r}(\tau) = \delta_{r,\pm r_0}q^{\frac{D_0}{4m}} + \sum_{\substack{D\geq 0\\ D\equiv r^2\xmod{4m}}} C^{m,D_0,r_0}_{n,\chi}(D,r)q^{\frac{D}{4m}},
\end{align}
where the Fourier coefficients are given by 
\begin{align}
\begin{split}
C^{m,D_0,r_0}_{n,\chi}(D,r) &= \sum_{s \xmod{2m}}\sum_{\substack{c>0\\c\equiv 0\xmod{n} }}\sum_{\substack{0\leq a < c\\(a,c) = 1}} B_{\gamma,\frac{1}{2}}\left(\vec{\mu},\frac{D}{4m}\right)_s K_{\gamma,\chi}\left(\vec{\mu},\frac{D}{4m}\right)_{s r}, \\
C^{m,D_0,r_0}_{n,\chi}(0,0) &= \frac{1}{2}
\ex(-\tfrac18)
\sum_{s\xmod{2m}}\frac{\sqrt{-\mu_s}}{\Gamma(\frac{3}{2})} \sum_{\substack{c>0\\c\equiv 0\xmod{n}}}\left(\frac{2\pi }{c}\right)^{\frac{3}{2}}\sum_{\substack{0\leq a < c \\ (a,c)=1}}K_{\gamma,\chi}(\vec{\mu},0)_{sr}  .
\end{split}
\end{align}
In the above, $\gamma = \left(\begin{smallmatrix} a & \ast \\ c & \ast\end{smallmatrix}\right)$, we use $\Gamma(x)$ for the usual Gamma function, and $\vec{\mu}=(\mu_r)$ is the $2m$ component vector which describes the poles of the Rademacher sum: it is zero in every entry except for $r_0$ and $-r_0$, where it is $\mu_{r_0} = \mu_{-r_0} = \frac{D_0}{4m}$. 
We also define 
\begin{align}\label{kloosterman}
\begin{split}
K_{\gamma,\chi}\left(\vec{\mu},\frac{D}{4m}\right)_{sr} &:=  \ex\left(\frac{D}{4m}\frac{d}{c}\right)\chi(\gamma)\varrho_m(\gamma,\upsilon)_{rs}^\dagger\ex\left(\mu_s \frac{a}{c}\right),\\
B_{\gamma,\frac{1}{2}}\left(\vec{\mu},\frac{D}{4m}\right)_{s} &:= 
\ex(-\tfrac18)
\left(-\frac{4m\mu_s}{D}\right)^{\frac{1}{4}}\frac{2\pi}{c} I_{\frac{1}{2}}\left(\frac{4\pi}{c} \sqrt{\frac{-D\mu_s}{4m}}\right),
\end{split}
\end{align}
for $\gamma=\left(\begin{smallmatrix} a&b\\c&d\end{smallmatrix}\right)$ and $\upsilon=\sqrt{c\tau+d}$ (cf.\ \S~\ref{sec:not}),
where $I_{\alpha}(x)$ denotes the modified Bessel function of the first kind, given by the power series expression
\begin{align}
I_\alpha(x) := \sum_{n\geq 0} \frac{1}{\Gamma(m+\alpha+1)m!}\left(\frac{x}{2}\right)^{2m+\alpha}.
\end{align}

We adopt the following conventions. In the case that the character $\chi$ is trivial, we suppress it from the notation, i.e.\ we write $R^{m,D_0,r_0}_{n}$. In the case that $r_0$ is uniquely determined (modulo $2m$) by $D_0$, we suppress it from the notation, and write $R^{m,D_0}_{n,\chi}$. We also attach a {\em normalized} Rademacher sum 
\begin{align}\label{normalized_rademacher}
{R}^{(\pd)}_{n,\chi}(\tau) 
:= \Omega_m^{\alpha}\big(aR^{m,D_0,r_0}_{n,\chi}(\tau)+b\theta_m(\tau)\big) 
\end{align}
to each lambdency $\pd=(D_0,\ell)$. Here $r_0$ is the minimal non-negative integer such that $D_0\equiv r_0^2\xmod 4m$, and the character $\a\in \widehat{O}_m$ is chosen so that $\ker(\a)=\{1,n,n',\dots\}$ when $\ell=m+n,n',\dots$.
Also, $a$ and $b$ are chosen 
so that ${C}^{(\pd)}_{n,\chi}(1,\pm 1)=0$ and ${C}^{(\pd)}_{n,\chi}(D_0,\pm r_0) = 1$,
where 
\begin{align}\label{normalized_rademacher_Fou}
{R}^{(\pd)}_{n,\chi}(\tau) 
= 
\sum_{\substack{D\geq D_0\\D\equiv r^2\xmod{4m}}}{C}^{(\pd)}_{n,\chi}(D,r)q^{\frac{D}{4m}}
\end{align}
is the Fourier series expansion of ${R}^{(\pd)}_{n,\chi}$.

The convergence of the Rademacher sums (\ref{rad-fourier}), (\ref{normalized_rademacher_Fou}) follows from similar arguments to those that appear in \cite{MR3433373}.

%---------------------------------------------------------------------------------------%
\section{Moonshine}\label{sec:mns}
%---------------------------------------------------------------------------------------%

We give a detailed description of penumbral moonshine at $D_0=-3$ and $D_0=-4$ in this section. We discuss the relevant groups---both the infinite ones described by lambencies, and the finite ones indexed by lambdencies---in \S~\ref{sec:mns-gps}. Then we describe the associated weakly holomorphic vector-valued modular forms in \S~\ref{sec:mns-fms}. Our main results appear in \S~\ref{sec:mns-mds}, wherein we prove that the forms of \S~\ref{sec:mns-fms} are realized as the McKay--Thompson series associated to modules for the finite groups that appear in \S~\ref{sec:mns-gps}. We also discuss some notable properties of these modules in \S~\ref{sec:mns-mds}, including a discriminant property, similar to that which appears in umbral moonshine. We discuss a connection between penumbral moonshine and distinguished lattices in \S~\ref{sec:mns-lts}.

\subsection{Groups}\label{sec:mns-gps}

As we have explained in \S~\ref{sec:int-pmo}, the cases of penumbral moonshine that we present in this work are parameterized by the lambdencies $\pd=(D_0,\ell)$ (\ref{eqn:int:lambdency}), where 
\begin{itemize}
\item
$\ell$ is a lambency (\ref{eqn:int:ell}) that represents a Fricke genus zero subgroup of $\SL_2(\RR)$ (cf.\ (\ref{eqn:pre-mod:Gamma0m+K})), and 
\item
$D_0$ is an $\ell$-admissible discriminant (see \S~\ref{sec:int-pfm}), and 
\item
either $D_0=-3$ or $D_0=-4$.
\end{itemize} 
There are finitely many Fricke genus zero lambencies, and they are all on display in Table \ref{tab:pmo-gps:FrickeGenusZero}. In this table the lambencies $\ell$ for which $D_0=-3$ is $\ell$-admissible are indicated with the symbol $\star$, and we use the symbol $\ddagger$ to indicate those for which $D_0=-4$ are $\ell$-admissible.

Recall from \S~\ref{sec:int-pmo} that we also restrict attention to lambencies $\ell=m+n,n',\dots$ with square-free level $m$. Inspecting Table \ref{tab:pmo-gps:FrickeGenusZero} we see that 
\begin{gather}
\begin{gathered}\label{eqn:pmo-gps:D0-3}
	1, \\
	3+3,\; 7+7,\; 13+13,\; 19+19,\; 31+31, \\
	21+3,7,21,\; 39+3,13,39,
\end{gathered}
\end{gather}
are the lambencies to be considered for $D_0=-3$,
while for $D_0=-4$ the relevant lambencies are 
\begin{gather}
\begin{gathered}\label{eqn:pmo-gps:D0-4}
	1, \\
	2+2,\; 5+5,\; 17+17,\; 29+29,\; 41+41, \\
	10+2,5,10,\;\; 26+2,13,26,\;\; 34+2,17,34.
\end{gathered}
\end{gather}
(We also see that, for $D_0=-3$, the only 
lambency that we neglect by restricting to square-free levels is 
$\ell=49+49$, while for $D_0=-4$ we neglect just $\ell=25+25$ and $\ell=50+2,25,50$.)

\begin{table}[ht]
\begin{footnotesize}
\begin{center} 
\caption{\small \label{tab:pmo-gps:FrickeGenusZero} 
{The lambencies $\ell$ 
that are Fricke and genus zero. 
}}
\begin{tabular}{ R@{\hspace{\tabcolsep}} R @{\hspace{\tabcolsep}} | R@{\hspace{\tabcolsep}} R@{\hspace{\tabcolsep}} | R @{\hspace{\tabcolsep}}R @{\hspace{\tabcolsep}}| R  }
\toprule
1  & \star~\ddagger & 17+17 & \ddagger &  31+31 & \star & 51+3,17,51 \\
2+2  & \ddagger & 18+2, 9,18 & & 32+32& & 54+2,27,54 \\
3+3 & \star & 18+18 & & 33+33 & & 55+5,11,55 \\
4+4 & & 19+19 & \star & 34+2,17,34 & \ddagger & 56+7,8,58 \\
5+5 & \ddagger & 20+4,5,20 & & 35+5,7,35 & & 59+59 \\
6+2,3,6 & & 20+20 & & 35+35 & & 60+3,4,12,15,20,60 \\
6+6 & & 21+3,7,21 & \star & 36+4,9,36 & & 60+4,15,60 \\
7+7 & \star &  21+21 & & 36+36 & & 62+2,31,62 \\
 8+8 & & 22+2,11,22 & & 38+2,19,38 & & 66+2,3,6,11,22,33,66 \\
9+9 & & 23+23 & &  39+3,13,39 & \star & 66+6,11,66 \\
10+2,5,10 & \ddagger & 24+3,8,24 & & 39+39 & & 69+3,23,69 \\
10+10 & & 24+24 & & 41+41 & \ddagger & 70+2,5,7,10,14,35,70 \\
11+11 & & 25+25 & \ddagger & 42+2,3,6,7,14,21,42 & & 71+71 \\
12+3,4,12 & & 26+2,13,26 & \ddagger & 42+3,14,42 & & 78+2,3, 6,13,26,39,78 \\
12+12 & & 26+26 & & 44+4,11,44 & & 87+3,29,87 \\
13+13 & \star~\ddagger & 27+27 & & 45+5,9,45 & & 92+4,23,92 \\
14+2,7,14 & & 28+4,7,28 & & 46+2,23,46 & & 94+2,47,94 \\
14+14 & & 29+29 & \ddagger & 47+47 & & 95+5,19,95 \\
15+3,5,15 & & 30+2,3,5,6,10,15,30 & & 49+49 & \star & 105+3,5,7,15,21,35,105 \\
15+15 & & 30+2,15,30 & & 50+2,25,50 & \ddagger & 110+2,5,10,11,22,55,110 \\
16+16 & & 30+5,6,30 & & 50+50 & & 119+7,17,119 \\
\bottomrule
\end{tabular}
\end{center}
\end{footnotesize}
\end{table}

Note from (\ref{eqn:pmo-gps:D0-3}--\ref{eqn:pmo-gps:D0-4}) 
that each lambency arising is of the form $\ell=m+n,n',\dots$, where $\{1,n,n',\dots\}$ is the entire set of exact divisors of $m$ (and this is even true when $m$ is not square-free).
It follows from this that $F^{(\pd)}$ as in (\ref{eqn:int-pfm:Fpdr_Fourier})---being a non-theta series
$D_0$-optimal (\ref{eqn:int:penumbral_optimality}) solution 
to (\ref{eqn:int:penumbral_lambency_condition}) for $\ell$---is uniquely determined by a choice of $C^{(\pd)}(D_0,r_0)$, and the condition that $C^{(\pd)}(1,1)=0$, for all the relevant lambdencies $\pd=(D_0,\ell)$, so long as we also impose the symmetry condition (\ref{eqn:int:franequalsfr}).
Thus the $F^{(\pd)}$ that we treat here furnish strong analogs of normalized principal moduli, according to the discussion of \S~\ref{sec:int-pmo}.

Having 
explained the content of Table \ref{tab:pmo-gps:FrickeGenusZero}, 
we now turn to Tables \ref{tab:pmo-gps:penumbralgroupsD0m3} and \ref{tab:pmo-gps:penumbralgroupsD0m4}. 
The main service these tables provide is the specification of the penumbral groups $G^{(\pd)}$, for $\pd=(D_0,\ell)$, for $D_0=-3$ in 
Table \ref{tab:pmo-gps:penumbralgroupsD0m3}, and for 
$D_0=-4$ 
in Table \ref{tab:pmo-gps:penumbralgroupsD0m4}.
In the top rows of these tables we just give the levels $m$ of the lambencies in question, rather than the full lambencies $\ell=m+n,n',\dots$, as the latter are uniquely determined by the former according to (\ref{eqn:pmo-gps:D0-3}--\ref{eqn:pmo-gps:D0-4}).

In Table \ref{tab:pmo-gps:penumbralgroupsD0m3} we write $\Th$ for the sporadic simple Thompson group, as studied in \cite{MR0399193,MR0409630}. (See also \cite{atlas}.) This is the only sporadic simple group that appears amongst the cases of penumbral moonshine that we present here, but the remaining penumbral groups do include some other exceptional examples. 
For instance, a group of the form $2.F_4(2).2$ appears in Table \ref{tab:pmo-gps:penumbralgroupsD0m4}. 
To understand what we mean by this 
write $F_4(2)$ for the Lie-type finite simple group of exceptional type $F_4$, defined over the field with $2$ elements, and let $F_4(2).2$ denote its automorphism group. 
(See \cite{atlas,MR1025610}.) 
The notation $2.F_4(2).2$ refers to a group $G$ that participates in a short exact sequence
\begin{gather}\label{eqn:pmo-gps:2F422seq}
	\mathds{1} \to \ZZ_2 \to G \xrightarrow{\pi} F_4(2).2 \to \mathds{1},
\end{gather}
and is not of the form 
$\ZZ_2\times F_4(2).2$.
Here, as in Tables \ref{tab:pmo-gps:penumbralgroupsD0m3} and \ref{tab:pmo-gps:penumbralgroupsD0m4}, we write $\ZZ_n$ for a cyclic group of order $n$, and $\mathds{1}$ is a shorthand for $\ZZ_1$.
Now there is actually more than one solution 
to (\ref{eqn:pmo-gps:2F422seq}). 
The group $G^{(-4,1)}$ is a 
solution 
for which the smallest order of a preimage under $\pi$ of an outer automorphism of $F_4(2)$ is $4$. 
We can also say that $G^{(-4,1)}$ is the unique 
solution 
to (\ref{eqn:pmo-gps:2F422seq}) that occurs as a subgroup of the monster. 

It is a 
similar but simpler story for $3.G_2(3)$, which appears in Table \ref{tab:pmo-gps:penumbralgroupsD0m3}. 
To explain this notation 
write $G_2(3)$ for the Lie-type finite simple group of exceptional type $G_2$, defined over the field with $3$ elements. 
(See \cite{atlas}.)
The notation $3.G_2(3)$ refers to the unique-up-to-isomorphism group $G$ that participates in a short exact sequence
\begin{gather}\label{eqn:pmo-gps:3G23seq}
	\mathds{1}\to \ZZ_3\to G\to G_2(3)\to \mathds{1},
\end{gather}
and is not of the form $\ZZ_3\times G_2(3)$.

\begin{table}
\begin{footnotesize}
\begin{center} 
\caption{\small \label{tab:pmo-gps:penumbralgroupsD0m3} {The penumbral groups,  and related structures, for $D_0=-3$. }}
\begin{tabular}{c | c c c c c c c c}
\toprule
$m$ & $1$ & $3$ & $7$ & $13$ & $19$ & $21$  & $31$ & $39$  \\\midrule
$G^{(\pd)}$ & $\Th$ & $3.G_2(3)$ & $L_2(7)$ & $\mathbb{Z}_3$ & $\mathds{1}$ & $\mathds{1}$ & $\mathds{1}$  & $\mathds{1}$  \\
$C(m)$& $\Th$ & $\mathbb{Z}_3\times G_2(3)$ & $\mathbb{Z}_7\times L_2(7)$ & $\mathbb{Z}_{13}\times\mathbb{Z}_3$ & $\mathbb{Z}_{19}$ & $\mathbb{Z}_{21}$ & $\mathbb{Z}_{31}$ & $\mathbb{Z}_{39}$ \\
$\Aut(\LL^{(\pd)})$& $\mathbb{Z}_2\times \Th$ & $\ZZ_2\times G_2(3)$ & $\mathbb{Z}_2\times L_2(7).2$ \\
\midrule
$C^{(\pd)}(0,0)$& $248$ & $28$ & $6$ & $1$ & $1$  & $0$ & $0$  & $1$ \\
$C^{(\pd)}(D_0,r_0)$ & $2$ & $4$ & $2$ & $1$ & $1$ & $1$ & $1$ &  $1$ \\
\bottomrule
\end{tabular}
\end{center}
\end{footnotesize}
\end{table}

To understand the notation $2.S_6.2$, which appears in Table \ref{tab:pmo-gps:penumbralgroupsD0m4}, recall that there is exactly one positive integer $n$ for which the symmetric group $S_n$ on $n$ letters admits an outer automorphism; namley, $n=6$. (See e.g.\ \cite{MR0338152,MR1400408}.) We write $S_6.2$ for the automorphism group of $S_6$, and note that the sporadic simple Mathieu group $M_{12}$ admits $S_6.2$ as a subgroup. (See e.g.\ \cite{atlas}.) We also note that there is a unique group $G$ that satisfies
\begin{gather}\label{eqn:pmo-gps:2M12seq}
	\mathds{1}\to \ZZ_2\to G
	\xrightarrow{\pi} 
	M_{12}
	\to 
	\mathds{1}
\end{gather}
and is not of the form $\ZZ_2\times M_{12}$. (This solution to (\ref{eqn:pmo-gps:2M12seq}) is commonly denoted $2.M_{12}$.) By $2.S_6.2$ we mean the 
preimage
under the $\pi$ in (\ref{eqn:pmo-gps:2M12seq}) of a subgroup $S_6.2$ in $M_{12}$. 

Two more notable groups appear in Table \ref{tab:pmo-gps:penumbralgroupsD0m4}. Namely, 
we have $^2F_4(2)$, denoting the first Ree group of type $^2F_4$ (cf.\ e.g.\ \cite{MR3100781}), and $^2F_4(2)'$, denoting the commutator subgroup of $^2F_4(2)$, which is simple, and also known as the Tits group \cite{MR164968}.

\begin{table}
\begin{footnotesize}
\begin{center} 
\caption{\small \label{tab:pmo-gps:penumbralgroupsD0m4} 
{The penumbral groups, and related structures, for $D_0=-4$. }
}
\begin{tabular}{c | c c c c c c}
\toprule
$m$ & $1$ & 
$5$ & 
$13$ & $17$ &
$29$ & $41$ \\\midrule
$G^{(\pd)}$ & $2.F_4(2).2$ & 
$2.S_6.2$  & 
$\mathbb{Z}_4$ & $\mathbb{Z}_2$  &
$\mathds{1}$ & $\mathds{1}$  \\
$C(m)$ & $2.F_4(2).2$ & 
$\mathbb{Z}_5\times 2.S_6.2$  & 
$\mathbb{Z}_{13}\times\mathbb{Z}_{4}$ & $\mathbb{Z}_{17}\times\mathbb{Z}_2$ &
 & \\
 $\Aut(\LL^{(\pd)})$&\\
 \midrule
$ C^{(\pd)}(0,0)$ & $-492$&$8$&$2$&$1$&$1$&$0$\\
$ C^{(\pd)}(D_0,r_0)$ & $2$&$1$&$1$&$1$&$1$&$1$\\
\bottomrule
\end{tabular}
\end{center}
\begin{center} 
\begin{tabular}{c | c c c c }
\toprule
$m$ & 
$2$ & 
$10$ & 
$26$ &
$34$
\\\midrule
$G^{(\pd)}$ & 
${^2}F_4(2)$ & 
$\ZZ_5:\ZZ_4$ & 
$\ZZ_{2}$ &
$\mathds{1}$
\\
$C(2m)$ & 
$\mathbb{Z}_4\times {^2}F_4(2)'$ & 
$\mathbb{Z}_{20}\times D_{10}$ & 
$\ZZ_{52}$ &
\\
$\Aut(\LL^{(\pd)})$&
$2.F_4(2)$
\\
 \midrule
$ C^{(\pd)}(0,0)$ & $52$	&$2$&$0$&$1$\\
$ C^{(\pd)}(D_0,r_0)$ & $2$	&$1$&$1$&$1$\\
\bottomrule
\end{tabular}
\end{center}
\end{footnotesize}
\end{table}

This explains all the notation for penumbral groups $G^{(\pd)}$ in Tables \ref{tab:pmo-gps:penumbralgroupsD0m3} and \ref{tab:pmo-gps:penumbralgroupsD0m4}, except for 
$L_2(7)$ and $L_2(7).2$, which are shorthands for $\PSL_2(\FF_7)$ and 
$
\Aut(\PSL_2(\FF_7))
\simeq 
\PGL_2(\FF_7)
$, 
respectively, 
and $D_{10}$, which denotes the dihedral group of order $10$. 

As we mentioned in \S~\ref{sec:int-pmo}, the groups $G^{(\pd)}$ for $\pd=(D_0,\ell)$ with $D_0=-3$ are closely related to centralizer subgroups of $G^{(-3,1)}=\Th$. More specifically, for each $m$, the relevant centralizer subgroup is the largest that occurs as 
the centralizer of $g$ in $G$, for $G=\Th$ and $g$ an element of order $m$ in $G$. This is the meaning of $C(m)$ in Table \ref{tab:pmo-gps:penumbralgroupsD0m3}. The reader will note, from Table \ref{tab:pmo-gps:penumbralgroupsD0m3}, that $G^{(\pd)}$ is isomorphic to $C(m)/\ZZ_m$, for all $m$ except for $m=3$, when $\pd=(-3,\ell)$ and $m$ is the level of $\ell$.

The meaning and significance of $C(m)$ in Table \ref{tab:pmo-gps:penumbralgroupsD0m4} is the same, except that $D_0=-4$, and $G^{(-4,1)}=2.F_4(2).2$ replaces $\Th$, and it does not work for $m=29$ or $m=41$ because there are no elements with these orders in $2.F_4(2).2$. Also, we need a slightly different formulation for the $m$ that are even, when $D_0=-4$. To explain this let $G$ be the unique normal subgroup of index $2$ in $G^{(-4,1)}=2.F_4(2).2$. Then, for $m$ the level of $\ell$ and $m$ even,
we should consider the largest subgroup of $G^{(-4,1)}$ that centralizes an element of order $2m$ in the complement of $G$ in $G^{(-4,1)}$. This is the meaning of $C(2m)$ in Table \ref{tab:pmo-gps:penumbralgroupsD0m4}. Unfortunately, this prescription also fails to make sense for all relevant $m$, because there is no element of order $68$ in $2.F_4(2).2$. 

We note here that the group $G$ of the previous paragraph, having index $2$ in $G^{(-4,1)}$, is the unique-up-to-isomorphism solution to
\begin{gather}\label{eqn:mns-gps:2F42seq}
	\mathds{1}\to \ZZ_2\to G \to F_4(2)\to \mathds{1}
\end{gather}
that is not of the form $\ZZ_2\times F_4(2)$. This is the group we have in mind when we write $2.F_4(2)$ (in Table \ref{tab:pmo-gps:penumbralgroupsD0m4}, and in \S~\ref{sec:mns-lts}).

For some values of $m$ the groups $G^{(\pd)}$ 
are closely related to automorphism groups of distinguished modular lattices $\LL^{(\pd)}$. These lattices are discussed in more detail in \S~\ref{sec:mns-lts}. Their automorphism groups are denoted $\Aut(\LL^{(\pd)})$, and are described, for the relevant lambdencies $\pd$, in Tables \ref{tab:pmo-gps:penumbralgroupsD0m3} and \ref{tab:pmo-gps:penumbralgroupsD0m4}.

We also specify the values of $C^{(\pd)}(D_0,r_0)$ and $C^{(\pd)}(0,0)$, for every penumbral lambdency $\pd$ with $D_0=-3$ or $D_0=-4$, in Tables \ref{tab:pmo-gps:penumbralgroupsD0m3} and \ref{tab:pmo-gps:penumbralgroupsD0m4}. As we have explained earlier in this section, the choice of $C^{(\pd)}(D_0,r_0)$ determines $F^{(\pd)}$ uniquely (at least for the lambdencies considered here), once we also require that $C^{(\pd)}(1,1)=0$. The value of $C^{(\pd)}(0,0)$ is given because it coincides with the rank of $\LL^{(\pd)}$, in all the cases that the latter object exists (except for $\pd=(-3,3+3)$, in which case $C^{(\pd)}(0,0)$ is twice the rank of the lattice we describe).

To conclude this section we
offer a few words as to how the $G^{(\pd)}$ were found. In short, we have relied upon the genus zero subgroups of $\SL_2(\RR)$ to guide us here.
Indeed, we have discussed in 
\S~\ref{sec:int-pfm} 
how it is that genus zero groups govern the forms $F^{(\pd)}$ that can serve as graded-dimension functions in penumbral moonshine. (And in \S~\ref{sec:int-pmo} we have noted the stronger sense that this is true, when $D_0=-3$ and $D_0=-4$.) Observe now that if $\varphi\in J^\wsh_{k,mM}(n)$ for some positive integers $k$, $m$, $M$ and $n$, then setting $\varphi'(\tau,z):=\varphi(M\tau,z)$ we have
\begin{gather}\label{eqn:mns-gps:mlttlnmap}
	C_{\varphi'}(D,r)=C_{\varphi}(D,r)
\end{gather}
at the level of Fourier coefficients.
So in particular, the map $\varphi(\tau,z)\mapsto \varphi(M\tau,z)$ of (\ref{eqn:pre-jac:tautostau}) 
preserves $D_0$-optimality (\ref{eqn:int:penumbral_optimality}). 
For this reason, if $M=p$ is 
a prime for which $D_0$ is a square modulo $4p$, 
then
the $n=1$ case of this map (\ref{eqn:pre-jac:tautostau}) sends $F^{(\pd')}$ to a candidate for the graded trace $F^{(\pd)}_g$ of an element $g\in G^{(\pd)}$ of order $p$, when $\pd=(D_0,\ell)$ for $m$ the level of $\ell$, and $\pd'=(D_0,\ell')$ for $mp$ the level of $\ell'$. 
So for example, the primes $p$ which appear as levels of Fricke genus zero lambencies $\ell'$ of level $p$, for which $D_0$ is $\ell'$-admissible, are primes we might expect to see arising as orders of elements of $G^{(\pd)}$, for $\pd=(D_0,\ell)$ and $\ell=1$. 
But this only controls the primes $p$ for which $D_0$ is a square modulo $4p$. As we explain in more detail in \cite{pmz}, the remaining primes $p$ can also be handled, but for such primes the obstruction to the appearance of $p$ in the prime spectrum of $G^{(\pd)}$, for $\ell=m+n,n',\dots$, is the positivity 
of the genus of a certain non-Fricke lambency $\ell'$, with level $mp$.

To summarize what we have just explained suppose that $\pd=(D_0,\ell)$ is a penumbral lambdency (cf.\ Table \ref{tab:pmo-gps:FrickeGenusZero}) and $m$ is the level of $\ell$. 
If $p$ is a prime such that $D_0$ is a square modulo $4p$ then $p$ can appear as the order of an element of $G^{(\pd)}$ if there is a Fricke genus zero lambency $\ell'$ of level $mp$,
whereas if $p$ is a prime such that $D_0$ is not a square modulo $4p$ then $p$ can appear as the order of an element of $G^{(\pd)}$ if there is a non-Fricke genus zero lambency $\ell'$ of level $mp$.

Before moving on we point out that the above prescription sometimes delivers more primes than we are able to use. For example, it perfectly predicts the prime spectrum of $G^{(\pd)}$ for $\pd=(-3,1)$, but we could not find a compelling candidate for $G^{(\pd)}$ with $\pd=(-4,1)$ that has elements of order $29$ and $41$ (cf.\ Tables \ref{tab:pmo-gps:FrickeGenusZero} and \ref{tab:pmo-gps:penumbralgroupsD0m4}). 
Also, the above prescription has not been formulated for forms with non-trivial characters, so it works best for $p>3$, because we have found empirically that non-trivial characters often occur for the McKay--Thompson series $F^{(\pd)}_g$ associated to elements $g\in G^{(\pd)}$, that have order not coprime to $6$. (Cf.\ \S~\ref{app:mts-chr}.)

\begin{table}[ht!]
\begin{footnotesize}
\begin{center} 
\caption{\small \label{tab:mns-fms:mltrln_m3} {
Multiplicative relationships for penumbral moonshine at $D_0=-3$.
}}
\begin{tabular}{c | r r }
\toprule
Relation & $[g]$ & $[h]$ \\ \midrule
\multirow{14}{*}{\shortstack{$F^{(-3,1)}_{g,0}(\tau) = \frac{1}{2}\left(F^{(-3,3+3)}_{h,0}(3\tau)-F^{(-3,3+3)}_{h,2}(3\tau) \right)$ \\ $F^{(-3,1)}_{g,1}(\tau) = \frac{1}{2}\left(F^{(-3,3+3)}_{h,3}(3\tau)-F^{(-3,3+3)}_{h,1}(3\tau) \right)$}} & 3A & 1A \\
& 6B & 2A \\
& 9A & 3B \\ 
& 9B & 3C \\ 
& 9C & 3D \\ 
& 12AB & 4A \\
& 18A & 6BD \\
& 18B & 6C \\
& 21A & 7A \\
& 24AB & 8A \\
& 27A & 9A \\
& 27BC & 9BC \\
& 36BC & 12C \\
& 39AB & 13A \\
\midrule
\multirow{4}{*}{\shortstack{$F^{(-3,1)}_{g,r}(\tau) = \frac{1}{2}\sum\limits_{\substack{s\xmod{14} \\ s\equiv r\xmod{2}}} F_{h,s}^{(-3,7+7)}(7\tau)$}} & 7A & 1A \\
& 14A & 2A\\ 
& 21A & 3A \\
& 28A & 4A \\\midrule
\multirow{2}{*}{\shortstack{$F^{(-3,1)}_{g,r}(\tau) = \sum\limits_{\substack{s\xmod{26} \\ s\equiv r\xmod{2}}}F_{h,s}^{(-3,13+13)}(13\tau)$}} 
& 13A & 1A \\
& 39AB & 3AB \\
[0.18em]\midrule
$F^{(-3,1)}_{g,r}(\tau) = \sum\limits_{\substack{s\xmod{38} \\ s\equiv r\xmod{2}}}F_{h,s}^{(-3,19+19)}(19\tau)$ & 
19A & 1A \\
[1.26em]\midrule
$F^{(-3,1)}_{g,r}(\tau) = \sum\limits_{\substack{s\xmod{42} \\ s\equiv r\xmod{2}}}I_s F_{h,s}^{(-3,21+3,7,21)}(21\tau)$   & 21A & 1A \\ 
$F^{(-3,3+3)}_{g,r}(\tau) = 2\sum\limits_{\substack{s\xmod{42} \\ s\equiv r\xmod{6}}} F_{h,s}^{(-3,21+3,7,21)}(7\tau)$ & 7A  & 1A \\ 
$F^{(-3,7+7)}_{g,r}(\tau) = 2\sum\limits_{\substack{s\xmod{42} \\ s\equiv r\xmod{24}}} I_s F_{h,s}^{(-3,21+3,7,21)}(3\tau)$ & 3A & 1A \\ 
[1.26em]\midrule
$F^{(-3,1)}_{g,r}(\tau) = \sum\limits_{\substack{s\xmod{62} \\ s\equiv r\xmod{2}}}F_{h,s}^{(-3,31+31)}(31\tau)$ & 31A & 1A \\
[1.26em]\midrule
$F^{(-3,1)}_{g,r}(\tau) = \sum\limits_{\substack{s\xmod{78} \\ s\equiv r\xmod{2}}}I_s F_{h,s}^{(-3,39+3,13,39)}(39\tau)$   & 39AB & 1A \\
$F^{(-3,3+3)}_{g,r}(\tau) = 2 \sum\limits_{\substack{s\xmod{78} \\ s\equiv r\xmod{6}}} F_{h,s}^{(-3,39+3,13,39)}(13\tau)$ & 13A & 1A \\
$F^{(13+13,-3)}_{g,r}(\tau) = \sum\limits_{\substack{s\xmod{78} \\ s\equiv r\xmod{26}}} I_s F_{h,s}^{(-3,39+3,13,39)}(3\tau)$ & 3A & 1A 
\\[1.26em]
\bottomrule
\end{tabular}
\end{center}
\end{footnotesize}
\end{table}

\begin{table}[!ht]
\begin{footnotesize}
\begin{center} 
\caption{\small 
\label{tab:mns-fms:mltrln_m4}
Multiplicative relations for penumbral moonshine at $D_0=-4$.}
\begin{tabular}{c | r r }
\toprule
Relation & $[g]$ & $[h]$ \\ \midrule
\multirow{15}{*}{\shortstack{$F^{(-4,1)}_{g,r}(\tau) = \sum\limits_{\substack{s\xmod 4\\s\equiv r\xmod 2}}(-1)^{{\frac s2+1}}F^{(-4,2+2)}_{h,s}(2\tau)$ }} & 4Q & 1A \\
& 4NR & 2A \\
& 4JKOS & 2B \\ 
& 12Q & 3A \\ 
& 8FGOP & 4AF \\ 
& 8HQ & 4BG \\
& 8DEJT & 4C \\
& 20D & 5A \\
& 12JKOPR & 6A \\
& 20E & 10A \\
& 24CDEF & 12A \\
& 52AB & 13A \\
& 8I & 4DE \\
& 16EF & 8D \\
& 24CDEF & 12BC \\\midrule
\multirow{14}{*}{ \shortstack{$F^{(-4,1)}_{g,r}(\tau) = \sum\limits_{\substack{s\xmod 10\\s\equiv r\xmod 2}}F^{(-4,5+5)}_{h,s}(5\tau)$ }} & 5A & 1A \\
& 10A & 2A \\
& 10D & 2B \\
& 10E & 2C \\
& 15A & 3A \\
& 30A & 6A \\
& 20ABC & 4ABC \\
& 10B & 2D \\
& 10C & 2E \\
& 30C & 6B \\
& 30B & 6C \\
& 20D & 4D \\
& 40ABCD & 8ABCD \\
& 20E & 4E \\\midrule
\multirow{2}{*}{ \shortstack{$F^{(-4,1)}_{g,r}(\tau) = \sum\limits_{\substack{s\xmod 20\\s\equiv r\xmod 2}}(-1)^{\frac{s}{2}+1}F^{(-4,10+2,5,10)}_{h,s}(10\tau)$ }} & 20D & 1A \\
& 20E & 2A 
\\[.18em] 
\multirow{2}{*}{ \shortstack{$F^{(-4,2+2)}_{g,r}(\tau) = \sum\limits_{\substack{s\xmod 20\\s\equiv r\xmod 4}}F^{(-4,10+2,5,10)}_{h,s}(5\tau)$ }} & 5A & 1A \\
& 10A & 2A \\[.18em]\midrule
\multirow{3}{*}{ \shortstack{$F^{(-4,1)}_{g,r}(\tau) = \sum\limits_{\substack{s\xmod 26\\s\equiv r\xmod 2}}F^{(-4,13+13)}_{h,s}(13\tau)$ }} & 13A & 1A \\
& 52AB & 4AB\\ 
& 26A & 2A \\\midrule
\multirow{2}{*}{ \shortstack{$F^{(-4,1)}_{g,r}(\tau) = \sum\limits_{\substack{s\xmod 34\\s\equiv r\xmod 2}}F^{(-4,17+17)}_{h,s}(17\tau)$ }} & 17A & 1A \\
& 34A & 2A \\[.18em] \midrule
\shortstack{$F^{(-4,1)}_{g,r}(\tau) = \sum\limits_{\substack{s\xmod 52\\s\equiv r\xmod 2}}(-1)^{\frac s2 +1}F^{(-4,26+2,13,26)}_{h,s}(26\tau)$}  & 52AB & 1A, 2A \\[.18em] 
 \shortstack{$F^{(-4,2+2)}_{g,r}(\tau) = \sum\limits_{\substack{s\xmod 20\\s\equiv r\xmod 4}}F^{(-4,26+2,13,26)}_{h,s}(13\tau)$ } & 13A & 1A \\[.18em]\bottomrule
\end{tabular}
\end{center}
\end{footnotesize}
\end{table}

\subsection{Forms}
\label{sec:mns-fms}

We now describe our predictions for the 
McKay--Thompson series $F^{(\pd)}_g=(F^{(\pd)}_{g,r})$ (cf.\ (\ref{eqn:int-pmo:Wpd}--\ref{eqn:int-pmo:Fpdgr})) 
of penumbral moonshine. For $D_0=-3$ we specify these forms as Rademacher sums, as follows. Take $\ell$ in (\ref{eqn:pmo-gps:D0-3}), and to begin assume that the level $m$ of $\ell$ is not composite. 
Then the corresponding vector-valued function $F^{(\pd)}$, 
for $\pd=(-3,\ell)$, may be described explicitly 
as 
\begin{align}\label{eqn:mns-fms:Radellnoncmp}
F^{(\uplambda)}(\tau) 
=C^{(\pd)}(D_0,r_0)R_1^{m,-3}(\tau)+b\theta_m^0(\tau),
\end{align}
where $R_1^{m,-3}$ is the level 1, index $m$ Rademacher sum defined in equation \eqref{rad-fourier}, and $b$ is chosen so that $C^{(\uplambda)}(1,1) = 0$. The constant $C^{(\pd)}(D_0,r_0)$ is chosen (see Table \ref{tab:pmo-gps:penumbralgroupsD0m3}) to be as small as possible such that the resulting $G^{(\uplambda)}$-module $W^{(\pd)}$ is well-defined in the sense of having integral multiplicities in its decomposition into irreducible representations. 

The case of composite level $m$ requires only a modest generalization of this. Namely, the Weil representation $\varrho_m$ is now reducible (cf.\ (\ref{eqn:pre-jac:Jwshkm_decomp})), and so we require our identity McKay--Thompson series to live inside a subrepresentation. This subrepresentation is exactly that which is specified, via the symmetry condition (\ref{eqn:int:franequalsfr}), by the lambency $\ell=m+n,n',\dots$ in question.
We may pass to this subrepresentation by acting on the result of Rademacher summation with an appropriate projection matrix $\Omega_m^{\alpha}$ (see (\ref{eqn:pre-jac:OmegamK})), so that in  general 
we have
\begin{align}\label{eqn:mns-fms:Radell}
F^{(\uplambda)}(\tau)=
C^{(\pd)}(D_0,r_0){R}_{1}^{(\uplambda)}(\tau),
\end{align}
where ${R}_1^{(\uplambda)}$ is now the normalized Rademacher sum defined in \eqref{normalized_rademacher}. 

We now describe Rademacher sum constructions for the McKay--Thompson series of penumbral moonshine at $D_0=-3$ attached to non-identity elements of 
the groups $G^{(\pd)}$. 
For every relevant lambency $\ell$ (see (\ref{eqn:pmo-gps:D0-3})), the putative graded trace function (\ref{eqn:int-pmo:Fpdgr}) can be written as 
\begin{align}\label{eqn:mns-fms:MTRad}
F_g^{(\uplambda)}(\tau) = M_g\left(C^{(\pd)}(D_0,r_0){R}^{(\pd)}_{n_g,\psi_g}(\tau)  + \sum_{k>1}\kappa_{k,g}\theta_m(k^2\tau)  \right),
\end{align}
where $\pd=(-3,\ell)$, and where $M_g$ is a matrix which should commute with $\varrho_m(\widetilde{\Gamma}_0(o(g)))$. (In most cases, $M_g$ is the identity matrix.) The various ingredients which go into this description (\ref{eqn:mns-fms:MTRad}) are presented in the tables in Tables \ref{tab:mts-chr:D0-3ell1}--\ref{tab:mts-chr:D0-3ell13+13}, and the tables of \S~\ref{app:mts-tht}.

The McKay--Thompson series we propose for penumbral moonshine at $D_0=-4$ may also be described in terms of Rademacher sums. In many instances an expression as in (\ref{eqn:mns-fms:MTRad}) suffices, but the general case requires a more complicated construction. Rather than taking that route we instead specify the $F^{(\pd)}_g$ with $\pd=(-4,\ell)$, and $\ell$ as in (\ref{eqn:pmo-gps:D0-4}), by explicitly describing
the poles at cusps of the corresponding scalar-valued forms $\breve F^{(\pd)}_g$ (in \S~\ref{app:mts-sng}), and providing enough coefficients of each component $F^{(\pd)}_{g,r}$ to pin down the contributions from theta series (in \S~\ref{app:mts-cff}). 
(This approach was more convenient, for the purpose of carrying out computer-aided computations using PARI \cite{PARI2}.)

With these prescriptions in place we note that the McKay--Thompson series of penumbral moonshine enjoy multiplicative relationships, similar to those that manifest for umbral moonshine (see \S~5.3 of \cite{MUM}).
These are coefficient-level relations between McKay--Thompson series with fixed $D_0$, but with different lambency
and 
level. For $D_0=-3$ we display these relationships in Table \ref{tab:mns-fms:mltrln_m3}. Look to Table \ref{tab:mns-fms:mltrln_m4} for the cases that $D_0=-4$.
In Table \ref{tab:mns-fms:mltrln_m3} we use a symbol $I_s$, defined by setting $I_s := 1$ if $s^2 \equiv 0\xmod{3}$, and $I_s:=-\frac{1}{2}$ otherwise.

Most of the relationships in Tables \ref{tab:mns-fms:mltrln_m3} and \ref{tab:mns-fms:mltrln_m4} are explained by the level-raising map (\ref{eqn:pre-jac:tautostau}) (see also (\ref{eqn:mns-gps:mlttlnmap})). The remaining ones, involving $I_s$, follow from exactly the same construction, but applied to forms with non-trivial characters. These multiplicative relationships between McKay--Thompson series are consistent with the close relationship between penumbral groups and centralizers that we discussed in \S~\ref{sec:mns-gps} (see Tables \ref{tab:pmo-gps:penumbralgroupsD0m3} and \ref{tab:pmo-gps:penumbralgroupsD0m4}).

Although we have formulated most of our discussion in terms of vector-valued modular forms, we remark that the functions $F^{(\pd)}_{g}$, for $\pd=(D_0,\ell)$ with $m$ the level of $\ell$, can equivalently be repackaged 
as weakly skew-holomorphic Jacobi forms $\varphi^{(\pd)}_g$ of weight $1$ and index $m$, with level $n=o(g)$, by setting
\begin{align}\label{eqn:mns-fms:varphipdg}
\varphi_g^{(\pd)}(\tau,z) 
:= \sum_{r\xmod{2m}} \overline{F^{(\pd)}_{g,r}(\tau)}\theta_{m,r}(\tau,z). 
\end{align}
Also, we adopt the notational convention of writing $C^{(\pd)}_g(D,r)$ as a shorthand for $C_{\vf}(D,r)$ when $\vf=\vf^{(\pd)}_g$. The prescriptions we have made here are such that
the $C^{(\pd)}_g(D,r)$ are all rational integers, and in particular real, so that we have
\begin{gather}\label{eqn:mns-fms:Fpdgr}
	F^{(\pd)}_{g,r}=\sum_{D\equiv r^2\xmod 4m}C^{(\pd)}_g(D,r)q^{\frac{D}{4m}}
\end{gather}
for the Fourier expansions of the $F^{(\pd)}_{g,r}$ in (\ref{eqn:mns-fms:varphipdg}) (cf.\ (\ref{eqn:pre-jac:Fou})).

\subsection{Modules}\label{sec:mns-mds}

With the previous section as preparation, we now formulate and prove our main results.
\begin{thm}\label{thm:mns-mds:D0-3}
For $\pd=(-3,\ell)$ with $\ell$ in (\ref{eqn:pmo-gps:D0-3}), and for $G^{(\pd)}$ the corresponding penumbral group (as described in \S~\ref{sec:mns-gps}, Table \ref{tab:pmo-gps:penumbralgroupsD0m3}),
there exists a virtual graded $G^{(\pd)}$-module $W^{(\pd)}$ (\ref{eqn:int-pmo:Wpd}) for which the associated McKay--Thompson series $F_g^{(\pd)}$ (\ref{eqn:int-pmo:Fpdgr}) are as specified by (\ref{eqn:mns-fms:MTRad}), and the data of Tables \ref{tab:mts-chr:D0-3ell1}--\ref{tab:mts-chr:D0-3ell13+13} and \ref{tab:mts-tht:D0-3ell1}--\ref{tab:mts-tht:D0-3ell13+13}.
\end{thm}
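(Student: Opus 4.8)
The plan is to establish the existence of the virtual graded $G^{(\pd)}$-module $W^{(\pd)}$ by the standard "module-existence from congruences" technique used in umbral and Thompson moonshine: reduce the claim to a finite set of congruence and positivity statements about the candidate McKay--Thompson series, and verify those by a finite computation. Concretely, for each lambdency $\pd=(-3,\ell)$ with $\ell$ in (\ref{eqn:pmo-gps:D0-3}), and for each conjugacy class $[g]$ of $G=G^{(\pd)}$, the functions $F^{(\pd)}_g=(F^{(\pd)}_{g,r})$ have already been pinned down---either as the normalized Rademacher sums of (\ref{eqn:mns-fms:Radell}), (\ref{eqn:mns-fms:MTRad}), whose Fourier coefficients $C^{(\pd)}_g(D,r)$ are computable to any desired order from (\ref{rad-fourier})--(\ref{kloosterman}), or directly from the data tables. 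What must be shown is that for every pair $(D,r)$ there is a \emph{virtual character} $\chi_{D,r}$ of $G^{(\pd)}$ with $\chi_{D,r}(g)=C^{(\pd)}_g(D,r)$ for all $g$. Since the $F^{(\pd)}_{g,r}$ depend only on the conjugacy class of $g$ (they are Rademacher sums attached to $o(g)$ and a character, together with class-function choices of $M_g$ and $\kappa_{k,g}$), the function $g\mapsto C^{(\pd)}_g(D,r)$ is a well-defined class function, so the only content is integrality of its expansion in the basis of irreducible characters.

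The first step is therefore to set up, for each $G^{(\pd)}$ in Table \ref{tab:pmo-gps:penumbralgroupsD0m3}, its character table (these are all available: $\Th$ and $L_2(7)$ from \cite{atlas}, $3.G_2(3)$ from its construction via (\ref{eqn:pmo-gps:3G23seq}) and the $G_2(3)$ data in \cite{atlas}, and the cyclic/trivial cases are immediate), compute the inverse of the character table, and for each $(D,r)$ in a sufficiently large truncation apply this inverse to the vector $(C^{(\pd)}_g(D,r))_{[g]}$ to obtain the multiplicity vector $(m_{D,r,i})_i$; one checks these are non-negative integers for $D<0$ and integers in general, with the sign behavior noted in \S~\ref{sec:int-pmo} (the practical non-virtual-ness at $D_0=-3$). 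The second step is to reduce "sufficiently large truncation'' to "finite computation.'' Here one uses that $\bigoplus_r F^{(\pd)}_{g,r}$ lies in $J^{\wsh}_{1,m}(o(g),\psi_g)$---a finite-dimensional-modulo-cusp-poles space by the theory of \S~\ref{sec:pre-jac}---so the coefficients $C^{(\pd)}_g(D,r)$ for large $D$ are determined by finitely many low-order ones via the structure of weakly holomorphic vector-valued modular forms; more efficiently, one invokes the Rademacher-sum description itself, which expresses every $C^{(\pd)}_g(D,r)$ as a convergent $\SL_2$-sum, so a uniform bound on the number of classes and the depth of the principal part bounds the "number of unknowns,'' and a classical argument (as in \cite{MR3433373}, and exactly as in the umbral and Thompson cases) shows that integrality of the multiplicities for all $(D,r)$ follows from integrality for $D$ up to an explicit bound depending only on $m$ and the $o(g)$ arising.

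The third step is to assemble: define $W^{(\pd)}_{r,\frac{D}{4m}}:=\bigoplus_i m_{D,r,i}\,\varrho_i$ where $\varrho_i$ runs over the irreducible $G^{(\pd)}$-modules, form $W^{(\pd)}$ as in (\ref{eqn:int-pmo:Wpd}), and observe that by construction $\tr(g\mid W^{(\pd)}_{r,\frac{D}{4m}})=\sum_i m_{D,r,i}\chi_i(g)=C^{(\pd)}_g(D,r)$, which is precisely the statement that $F^{(\pd)}_g$ of (\ref{eqn:int-pmo:Fpdgr}) equals the function specified in \S~\ref{sec:mns-fms}. Taking $g=e$ recovers $F^{(\pd)}$ as the graded dimension, so all claimed properties hold. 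The main obstacle is the second step: making the reduction to a \emph{finite, explicitly bounded} check genuinely rigorous---i.e. proving that integrality up to some computable degree forces integrality everywhere. This is where one must be careful about the interplay between the Rademacher-sum convergence (needed to know the $F^{(\pd)}_g$ are honest elements of the relevant $J^{\wsh}_{1,m}(n,\psi_g)$) and the dimension count bounding the number of linear relations; for $G^{(\pd)}=\Th$ there are $48$ classes, so one also wants a practical, machine-checkable certificate rather than a hand computation, and the honest work is organizing PARI \cite{PARI2} computations of enough Fourier coefficients against the character table of $\Th$ to make the finite check decisive. Everything else---well-definedness of the class functions, the passage to actual (not merely virtual) modules for $D_0=-3$, and the identification with (\ref{eqn:int-pmo:Fpdgr})---is then formal.
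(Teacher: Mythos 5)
Your overall strategy (reduce the existence of $W^{(\pd)}$ to a finite computation about the candidate series) is the right one, but the crucial second step has a genuine gap, and the mechanism you gesture at is not the one that works. You propose to invert the character table, compute the multiplicities $m_{D,r,i}$ for $D$ up to some bound, and then invoke ``a classical argument'' to the effect that integrality of the multiplicities up to an explicit bound forces integrality for all $(D,r)$. No such principle exists in that form: integrality of the expansion of the class function $g\mapsto C^{(\pd)}_g(D,r)$ in the character basis is a statement about one coefficient at a time, not a congruence between modular forms, so Sturm-type propagation does not apply to it directly, and knowing the multiplicities are integral below a bound gives no control above it. (One could in principle recast integrality of the $i$-th multiplicity series as the single congruence $\sum_g\overline{\chi_i(g)}F^{(\pd)}_g\equiv 0 \xmod{\#G^{(\pd)}}$, but for $G^{(\pd)}=\Th$ the modulus and the level involved make the resulting Sturm bound useless, and you do not propose this.)

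The missing ingredient is Brauer's characterization of virtual characters in Thompson's congruence reformulation (see \cite{MR822245}): an integer-valued class function on $G^{(\pd)}$ is a virtual character if and only if finitely many explicit congruences hold, the simplest being $F^{(\pd)}_{g^p}\equiv F^{(\pd)}_g\xmod p$ for each prime $p$ dividing $\#G^{(\pd)}$, with more involved conditions modulo prime powers. These are congruences between \emph{pairs of McKay--Thompson series}; after passing to the scalar-valued forms $\breve F^{(\pd)}_g$ of (\ref{eqn:mns-mod:breveFpdg}) (no information is lost for the lambencies at hand) and multiplying by a suitable integral cusp form in the Kohnen plus space so as to land in a fixed space of holomorphic modular forms for $\Gamma_0(4mN_*)$, each becomes a congruence between genuine modular forms, and Sturm's theorem \cite{MR894516} reduces it to a finite coefficient check --- this is exactly what (\ref{eqn:mns-mds:sturm}) and Table \ref{tab:mns-mds:bounds} organize, and what the PARI computation certifies. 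With that substitution your first and third steps go through as stated (the convergence of the Rademacher sums and the class-function property are indeed formal); note also that the $\ell=1$ case is already established in \cite{MR3582425}, so only the remaining lambencies require the verification.
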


\begin{thm}\label{thm:mns-mds:D0-4}
For $\pd=(-4,\ell)$ with $\ell$ in (\ref{eqn:pmo-gps:D0-4}), and for $G^{(\pd)}$ the corresponding penumbral group (as described in \S~\ref{sec:mns-gps}, Table \ref{tab:pmo-gps:penumbralgroupsD0m4}),
there exists a virtual graded $G^{(\pd)}$-module $W^{(\pd)}$ (\ref{eqn:int-pmo:Wpd}) for which the associated McKay--Thompson series $F_g^{(\pd)}$ (\ref{eqn:int-pmo:Fpdgr}) are as specified by the data of Tables \ref{tab:mts-chr:D0-4ell1}--\ref{tab:mts-chr:D0-4ell26+021326}, \ref{tab:mts-sng:D0-4ell1}--\ref{tab:mts-sng:D0-4ell26+2,13,26}, and \ref{tab:mts-cff:D0-4ell1_1}--\ref{tab:mts-cff:D0-4ell_Gpdtrivial}.
\end{thm}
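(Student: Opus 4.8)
The plan for Theorems~\ref{thm:mns-mds:D0-3} and~\ref{thm:mns-mds:D0-4} is to construct the virtual modules $W^{(\pd)}$ of (\ref{eqn:int-pmo:Wpd}) one isotypic component at a time. Writing $V_\chi$ for an irreducible $\CC G^{(\pd)}$-module with character $\chi$, and setting
\begin{gather}\label{proof:mult}
	m^{(\pd)}_\chi(r,D):=\frac{1}{|G^{(\pd)}|}\sum_{g\in G^{(\pd)}}\overline{\chi(g)}\,C^{(\pd)}_g(D,r)
\end{gather}
for $D\equiv r^2\xmod{4m}$ (with $C^{(\pd)}_g(D,r)$ as in (\ref{eqn:mns-fms:Fpdgr})), one would define $W^{(\pd)}_{r,\frac{D}{4m}}:=\bigoplus_\chi m^{(\pd)}_\chi(r,D)\,V_\chi$. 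Everything then comes down to showing each $m^{(\pd)}_\chi(r,D)$ is a rational integer: granting this, $W^{(\pd)}_{r,\frac{D}{4m}}$ is a well-defined virtual $G^{(\pd)}$-module, and orthogonality of characters gives $\tr(g\mid W^{(\pd)}_{r,\frac{D}{4m}})=C^{(\pd)}_g(D,r)$, so that the associated McKay--Thompson series (\ref{eqn:int-pmo:Fpdgr}) are exactly the $F^{(\pd)}_{g,r}$ prescribed in \S~\ref{sec:mns-fms}, which is the assertion of the theorems.

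First I would record that the prescriptions of \S~\ref{sec:mns-fms} make $g\mapsto C^{(\pd)}_g(D,r)$ a class function valued in $\ZZ$ (integrality of the $F^{(\pd)}_{g,r}$ being built in: by the main result of \cite{MR1981614} for $g=e$, and by the explicit Rademacher-sum expressions (\ref{eqn:mns-fms:MTRad}) for $D_0=-3$ and the coefficient tables of Theorem~\ref{thm:mns-mds:D0-4} for $D_0=-4$). Next I would verify that the generating function $F^{(\pd)}_{\chi,r}:=\tfrac{1}{|G^{(\pd)}|}\sum_g\overline{\chi(g)}F^{(\pd)}_{g,r}$, whose $q$-expansion coefficients are the $m^{(\pd)}_\chi(r,D)$, is again a weakly holomorphic vector-valued modular form of weight $\tfrac12$ of the relevant (twisted, dual) index-$m$ Weil-representation type, and that its coefficients lie in $\QQ$. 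The latter is a Galois-equivariance statement: one reads off from the data of \S~\ref{sec:mns-fms} that $F^{(\pd)}_{g^a}$ agrees with $F^{(\pd)}_g$ (up to the appropriate twist) whenever the arithmetic of $a$ and $o(g)$ matches the action of the corresponding element of $\Gal(\QQc/\QQ)$ on character values, which makes the sum (\ref{proof:mult}) invariant under the natural $\Gal(\QQc/\QQ)$-action, hence rational. The modularity and correct pole behaviour of the $F^{(\pd)}_g$ themselves is, for $D_0=-3$, the statement that the Rademacher sums $R^{m,D_0,r_0}_{n,\chi}$ of \S~\ref{sec:pre-rad} converge and lie in the spaces $J^\wsh_{1,m}(n,\chi^\ast)$ with polar part precisely the $q^{D_0/4m}$ term of (\ref{rad-fourier}) --- established by adapting the convergence and modularity arguments of \cite{MR3433373} to half-integral weight and the twisted Weil representation, the projectors $\Omega_m^{\alpha}$ of (\ref{eqn:pre-jac:OmegamK}) then selecting the subrepresentation cut out by the lambency; for $D_0=-4$ it is a finite linear-algebra check that the singular parts and Fourier coefficients tabulated in Theorem~\ref{thm:mns-mds:D0-4} determine a unique and consistent element of $J^\wsh_{1,m}(o(g),\psi_g)$, such a space being finite-dimensional once poles at all cusps are bounded and the finitely many theta-series contributions (controlled by Serre--Stark \cite{MR0472707}) are pinned down. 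Isotypic averaging preserves all of this.

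The core step is the integrality of (\ref{proof:mult}). Because a form of the type just described is determined by its polar parts at the cusps together with finitely many further Fourier coefficients (the ones fixing the theta-series ambiguity), and because the ambient space of such forms carries an integral structure --- a $\ZZ$-basis of integral-coefficient forms, which one extracts from the bounded denominators of the Rademacher-sum coefficients or directly from the integral-coefficient forms $F^{(\pd)}_g$ --- each $m^{(\pd)}_\chi(r,D)$ is a $\ZZ$-linear combination of finitely many ``polar'' and ``theta-fixing'' values of $m^{(\pd)}_\chi$. Each of those finitely many values is, by (\ref{proof:mult}), a sum of the form $\tfrac{1}{|G^{(\pd)}|}\sum_g\overline{\chi(g)}\cdot(\text{a tabulated integer coefficient of }F^{(\pd)}_g)$, computable directly from the character table of $G^{(\pd)}$ and the data $C^{(\pd)}_g(D_0,r_0)$, $C^{(\pd)}_g(d^2,r)$ of \S\S~\ref{sec:mns-gps}--\ref{sec:mns-fms}; the normalizations $C^{(\pd)}(D_0,r_0)$ recorded in Tables~\ref{tab:pmo-gps:penumbralgroupsD0m3} and~\ref{tab:pmo-gps:penumbralgroupsD0m4} are chosen precisely so that these come out integral. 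Combined with the rationality from the previous step, this forces every $m^{(\pd)}_\chi(r,D)\in\ZZ$, completing the proof; the discriminant properties and (practical) non-virtuality noted later in \S~\ref{sec:mns-mds} would then be read off from the resulting module.

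The two steps I expect to cause the most trouble are: (i) for $D_0=-3$, proving convergence and \emph{exact} modularity of the twisted, half-integral-weight Rademacher sums, getting the multiplier system and the single polar term exactly right while keeping track of which character $\psi_g$ and projector $\Omega_m^{\alpha}$ attaches to each conjugacy class; and (ii) making the integrality reduction uniform, i.e.\ checking that the chosen $C^{(\pd)}(D_0,r_0)$ really do clear all denominators arising from the degrees of the number fields defined by irreducible characters of $G^{(\pd)}$ --- which, unlike in umbral moonshine, can exceed $2$. The Galois-rationality input also requires a genuine case-by-case verification rather than a formal one, and the finite residual computations are carried out with PARI \cite{PARI2}.
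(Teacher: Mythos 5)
Your route is genuinely different from the paper's. You attempt to prove integrality of the isotypic multiplicities $m^{(\pd)}_\chi(r,D)=\frac{1}{|G^{(\pd)}|}\sum_g\overline{\chi(g)}\,C^{(\pd)}_g(D,r)$ directly, via Galois rationality plus an asserted integral structure on the ambient space of forms. The paper instead reduces integrality to Thompson's reformulation of Brauer's characterization of virtual characters---a finite list of congruences such as $F^{(\pd)}_{g^p}\equiv F^{(\pd)}_g\xmod p$ (and more involved ones at higher prime powers) between the McKay--Thompson series themselves---and verifies these by passing to the scalarizations $\breve F^{(\pd)}_g$, multiplying by a suitable cusp form to land in a space of holomorphic modular forms, and invoking Sturm's bound to make the check finite (carried out in PARI).

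The genuine gap is in your integrality propagation. Rationality of $m^{(\pd)}_\chi(r,D)$ via Galois equivariance of the data is fine, and checking integrality of the finitely many ``determining'' values (polar parts at all cusps plus theta-fixing coefficients) is a legitimate finite computation. But the step ``each $m^{(\pd)}_\chi(r,D)$ is a $\ZZ$-linear combination of those finitely many determining values'' is precisely the assertion that the relevant space of weakly holomorphic forms admits a basis with integer Fourier coefficients whose determining data are the standard unit vectors (a Miller-type basis). You claim this can be extracted from the bounded denominators of the Rademacher-sum coefficients or from the integral forms $F^{(\pd)}_g$, but neither produces it: bounded denominators only yield $m^{(\pd)}_\chi(r,D)\in\tfrac{1}{M}\ZZ$ for some fixed $M$, and the $F^{(\pd)}_g$ span the space over $\QQ$ without any reason to generate, over $\ZZ$, the full lattice of forms with integral determining data. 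McGraw's theorem gives integrality of individual forms for suitable normalizations, not surjectivity of the integral lattice onto integral leading data. So as written your argument establishes only bounded denominators, and integrality of the multiplicities is the entire content of the theorem; this is exactly the difficulty the Brauer--Thompson criterion is designed to circumvent, since it converts integrality of all multiplicities into finitely many congruences between integral $q$-series, which Sturm's bound then reduces to a finite verification. A secondary complication for your route: the isotypic average $F^{(\pd)}_{\chi,r}$ mixes components transforming with different characters $\rho_{n|h}^v$ and different levels, so the ambient space you need is attached to the common subgroup $\Gamma_0(4mN_*)$ with its full cusp data, which makes the determining-data bookkeeping (and any putative integral basis) substantially more delicate than for a single $F^{(\pd)}_g$.
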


\begin{proof}[Proof of Theorems \ref{thm:mns-mds:D0-3} and \ref{thm:mns-mds:D0-4}]
Note that the $\ell=1$ case of Theorem \ref{thm:mns-mds:D0-3} has been established in \cite{MR3582425}. The remaining cases, of both theorems, may be proved by directly similar arguments. Specifically, for each lambdency $\pd$ (cf.\ (\ref{eqn:pmo-gps:D0-3}--\ref{eqn:pmo-gps:D0-4})), we require to verify the congruences amongst the coefficients of the $F^{(\pd)}_g$, for $g\in G^{(\pd)}$, that are given by Thompson's reformulation (see e.g.\ \cite{MR822245}) of Brauer's characterization of virtual characters. For example, we have to check that 
\begin{gather}\label{eqn:mns-mds:congmodp}
F^{(\pd)}_{g^p}\equiv F^{(\pd)}_g\xmod p
\end{gather}
for each $g\in G^{(\pd)}$, for each prime $p$ that divides $\#G^{(\pd)}$, and some more involved congruences are required, modulo powers of primes $p$ for which the $p$-valuation of $\#G^{(\pd)}$ is greater than $1$. 
Since the derivation of these congruences has appeared a number of times in recent literature, we suppress the details here, and instead refer to 
\S~4 of \cite{MR4230542}, for a detailed discussion 
in the case that $\pd=(-3,1)$, wherein $G^{(\pd)}=\Th$ is the Thompson group. 
(See \S~B.2 of \cite{MR822245} for a complete list of congruences arising in this case. See \cite{MR3539377} for a derivation of the congruences that apply in Mathieu moonshine (\ref{eqn:int:H2}).)

Note that no congruences are required in the case that $G^{(\pd)}$ is trivial. 
So for the remainder of this proof let $\pd=(D_0,\ell)$ be a penumbral lambdency for which $G^{(\pd)}$ is not trivial (cf.\ Tables \ref{tab:pmo-gps:penumbralgroupsD0m3} and \ref{tab:pmo-gps:penumbralgroupsD0m4}).
Also let $m$ be the level of $\ell$. 
Then to verify Thompson's congruences for $G^{(\pd)}$, we consider the scalar-valued modular forms 
\begin{gather}\label{eqn:mns-mod:breveFpdg}
\breve F^{(\pd)}_g(\tau):=\sum_{r\xmod2 m} F^{(\pd)}_{g,r}(4m\tau)
\end{gather} 
(cf.\ (\ref{eqn:pre-vvf:vector_to_scalar})), for $g\in G^{(\pd)}$.
Note that in principle it may happen that $C^{(\pd)}_g(D,r)\neq C^{(\pd)}_g(D,r')$, for $r\not \equiv r'\xmod 2m$, in which case information may be lost in the passage from the vector-valued $F^{(\pd)}_g$ to the scalar-valued $\breve F^{(\pd)}_g$. However, this does not occur for any of the lambencies in question (\ref{eqn:pmo-gps:D0-3}--\ref{eqn:pmo-gps:D0-4}), so, at least for the cases under consideration, it suffices to verify the required congruences (\ref{eqn:mns-mds:congmodp}), \&c., with $\breve F^{(\pd)}_g$ in place of $F^{(\pd)}_g$.

The next step is to find the minimal positive even integer $k$ for which there exists 
a cusp form 
${G}$ in the Kohnen plus space of weight $k-\frac12$ and level $4m$
such that 
$G$ (not to be confused with $G^{(\pd)}$) has integer Fourier coefficients, and satisfies
$G(\tau)=q^d+O(q^{d+1})$, where $d\geq |D_0|$. 
Then, for each $g\in G^{(\pd)}$, the function $f_g:=\breve F^{(\pd)}_gG$ is a modular form of weight $k$ for $\Gamma_0(4mN)$, where $N=nh$, for $n=o(g)$ and $h$ as given in the tables of \S~\ref{app:mts-chr}. 
Now it suffices to check (\ref{eqn:mns-mds:congmodp}), \&c., with $f_g$ in place of $F^{(\pd)}_g$, and since the $f_g$ are modular forms, we may apply Sturm's bound in order to do this. 
Specifically, taking $N_*$ to be the least common multiple of the $N$ arising in any one of the required congruences, the forms $f_g$ in that congruence all belong to the space of modular forms of weight $k$ (with trivial character) for $\Gamma_0(4mN_*)$, and Theorem 1 of \cite{MR894516} tells us that the congruence holds, if it holds up to $O(q^{B_*})$, for 
\begin{gather}\label{eqn:mns-mds:sturm}
	B_*=\frac{k}{12}\left[\SL_2(\ZZ):\Gamma_0(4mN_*)\right].
\end{gather}

So the final step we take is to find the largest value $B$ that occurs as the right-hand side of (\ref{eqn:mns-mds:sturm}), for every congruence arising (see Table \ref{tab:mns-mds:bounds}), and check all these congruences up to $O(q^B)$. 
We performed this check, at least up to $O(q^{2B})$ in each case, using PARI2 \cite{PARI2}.
\end{proof}

To supplement the above proof we give the 
relevant values of $k$ and $B$ in Table \ref{tab:mns-mds:bounds},
for each of the relevant lambdencies $\pd$. 
It turns out that, for each $\pd$ arising, the largest $N_*$ in (\ref{eqn:mns-mds:sturm}) is of the form $N=nh$ for some particular $g\in G^{(\pd)}$. So we specify 
these values of $N$, $n$ and $h$ too. 
(The conjugacy classes of corresponding elements $g\in G^{(\pd)}$ can be extracted from the tables in \S~\ref{app:mts-chr}.
For example, $n=o(g)=24$ and $h=12$ for $g$ in the classes $24CD$ of $G^{(-3,1)}=\Th$.)

\begin{table}[ht]
\begin{small}
\begin{center}
\caption{\small \label{tab:mns-mds:bounds} Sturm bounds for Theorems \ref{thm:mns-mds:D0-3} and \ref{thm:mns-mds:D0-4}.}
\begin{tabular}{cc|cccc|c}
\toprule
$D_0$&$\ell$&$k$&$N$&$n$&$h$&$B$\\
\midrule
$-3$	&$1$		&$10$&$288$&$24$&$12$	&$1920$\\
	&$3+3$	&$4$&$48$&$24$&$2$		&$384$\\
	&$7+7$	&$4$&$9$&$3$&$3$			&$192$\\
	&$13+13$	&$4$&$9$&$3$&$3$			&$336$\\
\midrule
$-4$	&$1$		&$16$&$320$&$40$&$8$		&$3072$\\
	&$2+2$	&$8$&$128$&$16$&$8$		&$1024$\\
	&$5+5$	&$6$&$80$&$20$&$4$		&$1440$\\
	&$13+13$&$4$&$16$&$4$&$4$		&$448$\\
	&$17+17$&$4$&$4$&$2$&$2$			&$144$\\
	&$10+2,5,10$	&$4$&$5$&$5$&$1$		&$120$\\
	&$26+2,13,26$	&$2$&$2$&$2$&$1$		&$56$\\
\bottomrule
\end{tabular}
\end{center}
\end{small}
\end{table}

We have formulated Theorems \ref{thm:mns-mds:D0-3} and \ref{thm:mns-mds:D0-4} in terms of virtual modules, but, as we alluded to in \S~\ref{sec:int-pmo}, this virtual-ness is of a special nature for $D_0=-3$. 
More specifically, for $\pd=(D_0,\ell)$, with $D_0=-3$ and $\ell$ any of the lambencies in (\ref{eqn:pmo-gps:D0-3}), it is the case that all the multiplicities of irreducible characters for $G^{(\pd)}$ in $W^{(\pd)}_{r,\frac{D}{4m}}$ are non-negative or non-positive, depending only on $r$, and whether or not $D$ is the minimal discriminant not less that $D_0=-3$ such that $D\equiv r^2\xmod 4m$. 
For example, in Thompson moonshine, where $\pd=(-3,1)$, we have that $W^{(\pd)}_{r,\frac{D}{4}}$ is $(-1)^r$ times an honest module if $D\geq 0$, and the remaining non-zero component $W^{(\pd)}_{1,\frac{-3}{4}}$ is $2$ times the trivial representation (cf.\ (\ref{eqn:int:F-31}--\ref{eqn:int:W-31})). 

We note here that for $\pd=(-3,1)$, we make the passage from $\breve W^{(\pd)}$ (see (\ref{eqn:int:W-31})) to $W^{(\pd)}$ (see (\ref{eqn:int-pmo:Wpd})) by taking $W^{(\pd)}_{r,\frac{D}{4}}$ to be $\breve W^{(\pd)}_{D}$ in case $D\equiv r^2\xmod 4$, and $0$ otherwise. Conversely, for any of the penumbral lambdencies $\pd$ (see (\ref{eqn:pmo-gps:D0-3}--\ref{eqn:pmo-gps:D0-4})), we obtain a virtual graded $G^{(\pd)}$-module $\breve W^{(\pd)}=\bigoplus_D\breve W^{(\pd)}_D$ from $W^{(\pd)}$ (\ref{eqn:int-pmo:Wpd}), for which the associated McKay--Thompson series are the scalar-valued forms $\breve F^{(\pd)}_g$ (cf.\ (\ref{eqn:int:F-31gW-31}), (\ref{eqn:mns-mod:breveFpdg})), by setting 
\begin{gather}\label{eqn:mns-mds:breveWpdfromWpd}
	\breve W^{(\pd)}_D :=\bigoplus_{\substack{r\xmod 2m\\D\equiv r^2\xmod 4m}}
	W^{(\pd)}_{r,\frac{D}{4m}}.
\end{gather}
Note though, that for all the lambdencies $\pd$ we consider, the $r$ such that $D\equiv r^2\xmod 4m$ are uniquely determined modulo $2m$. So in practice there is a just a single term in the summation in (\ref{eqn:mns-mds:breveWpdfromWpd}), for each of the lambdencies considered here.

The statement we have given above on non-virtual-ness fails in general for $D_0=-4$. For example, for $\pd=(-4,1)$, both positive and negative multiplicities arise in the decomposition of $W^{(\pd)}_{0,0}$ into irreducible modules for $G^{(\pd)}$ (cf.\ (\ref{eqn:int:F-41})). However, we see another feature here, in that the grading by $r$ predicts whether the corresponding (virtual) modules $W^{(\pd)}_{r,\frac{D}{4}}$ are faithful or not. Indeed, the irreducible constituents of the modules $W^{(\pd)}_{r,\frac{D}{4}}$ with $r=1$ are all faithful for $G^{(\pd)}$, whereas the modules $W^{(\pd)}_{r,\frac{D}{4}}$ with $r=0$ factor through to $F_4(2).2$ (cf.\ (\ref{eqn:pmo-gps:2F422seq})).

A directly similar statement holds for $\pd=(-3,3+3)$, where the group $G^{(\pd)}$ is $3.G_2(3)$ (cf.\ (\ref{eqn:pmo-gps:3G23seq})). Specifically, 
the decomposition of $W^{(\pd)}_{r,\frac{D}{12}}$ into irreducible representations of $G^{(\pd)}$ features only 
faithful
irreducible representations 
if $r$ is coprime to $3$,
while the representations with $r\equiv 0\xmod 3$ factor through to $G_2(3)$.

As we have commented in 
\S~\ref{sec:mns-lts}, one aspect of Thompson moonshine which was not emphasized in \cite{MR3521908} is the fact that the Thompson group is the automorphism group of a 248-dimensional unimodular lattice, 
which we label as $\Lambda^{(-3,1)}$. The associated complex representation appears as a particular graded component of the module $W^{(-3,1)}$. 
Specifically, we have
\begin{align}\label{eqn:mns-mds:Wpd00LLpd}
W^{(\pd)}_{0,0}\cong \Lambda^{(\pd)}\otimes \CC.
\end{align}
This property (\ref{eqn:mns-mds:Wpd00LLpd}) holds also for $\pd=(-3,7+7)$ and $\pd=(-4,2+2)$ (cf.\ \S~\ref{sec:mns-lts}), while for $\pd=(-3,3+3)$ we should take two copies of $\LL^{(\pd)}$ on the right-hand side.
Note that there is an obstruction to (\ref{eqn:mns-mds:Wpd00LLpd}) for $\pd=(-4,1)$, in that there is no non-trivial $G^{(\pd)}$-module of the correct dimension 
in this case (cf.\ (\ref{eqn:int:F-41})). That is, the left-hand side of (\ref{eqn:mns-mds:Wpd00LLpd}) is really virtual when $\pd=(-4,1)$. Also, there are four cases (see Tables \ref{tab:pmo-gps:penumbralgroupsD0m3} and \ref{tab:pmo-gps:penumbralgroupsD0m4}) where the left-hand side of (\ref{eqn:mns-mds:Wpd00LLpd}) is zero-dimensional.

A final 
feature 
that we emphasize here is 
a discriminant property similar to the one found in umbral moonshine (see \S~6.4 of \cite{MUM}). 
For certain lambdencies $\pd=(D_0,\ell)$, this property relates the number fields over which 
the $G^{(\pd)}$-modules $W^{(\pd)}_{r,\frac{D}{4m}}$ are defined, 
to the discriminants $D$ that index them. For example, for $\pd=(-3,1)$, whereby $G^{(\pd)}=\Th$ is the Thompson group, the $G^{(\pd)}$-module $W^{(\pd)}_{1,\frac{5}{4}}$ is ($-1$ times) the direct sum of a conjugate pair of irreducible modules of dimension $85995$ (cf.\ (\ref{eqn:int:F-31})), and the trace of an element of order $15$ on either of these modules takes values in $\QQ(\sqrt{-15})$ (but not in $\QQ$). 
This extends, with one consequence being that irrational numbers in $\QQ(\sqrt{-15})$ must be used, in order to write down the $G^{(\pd)}$-module $W^{(\pd)}_{r,\frac{D}{4}}$ explicitly using matrices, whenever $D=5f^2$ and $f$ is coprime to $3$.

A similar property holds for $\pd=(-4,1)$, where now $G^{(\pd)}$ takes the form $2.F_4(2).2$ (cf.\ (\ref{eqn:pmo-gps:2F422seq})), and $W^{(\pd)}_{1,\frac{D}{4}}$ involves the direct sum of a conjugate pair of irreducible (and faithful) modules of dimension $565760$, whenever $D=5f^2$ and $f$ is odd (cf.\ (\ref{eqn:int:F-41})). Furthermore, the trace of an element of order $40$ on either of these $565760$-dimensional modules takes values in $\QQ(\sqrt{-40})$ (but not in $\QQ$). For a general statement we offer the following.

\begin{con}\label{con:msn-mds:dscprp}
For $D_0$, $\ell$, $d$ and $D_1$ as in Table \ref{tab:mns-mds:dscprp} set $\pd=(D_0,\ell)$, suppose that $f$ is coprime to $D_0$, let $D=df^2$, and choose $r$ such that $D\equiv r^2\xmod 4m$, where $m$ is the level of $\ell$. Then there exists an element $g$ in $G^{(\pd)}$ of order $|D_1|$, and an irreducible constituent $M$ of $W^{(\pd)}_{r,\frac{D}{4m}}$ such that $\tr(g|M)$ is an irrational element of $\QQ(\sqrt{D_1})$.
\end{con}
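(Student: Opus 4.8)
\emph{A strategy for Conjecture~\ref{con:msn-mds:dscprp}.} The plan is to leverage the modules $W^{(\pd)}$ furnished by Theorems~\ref{thm:mns-mds:D0-3} and~\ref{thm:mns-mds:D0-4}, whose homogeneous components are completely pinned down by the McKay--Thompson data of \S~\ref{sec:mns-fms}, and to recast the assertion as the non-vanishing of a single multiplicity. For each row of Table~\ref{tab:mns-mds:dscprp} one first reads off from the character table of $G^{(\pd)}$ (see \cite{atlas}) a Galois-conjugate pair of irreducible characters $\chi,\bar\chi$ with character field $\QQ(\chi)=\QQ(\sqrt{D_1})$, together with an element $g\in G^{(\pd)}$ of order $|D_1|$ for which $\chi(g)\notin\QQ$; this is a finite check, carried out once for each of the finitely many lambdencies in Table~\ref{tab:mns-mds:dscprp}. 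Letting $M$ be the irreducible affording $\chi$, the conjecture is then equivalent to the statement that $M$ occurs with non-zero multiplicity in $W^{(\pd)}_{r,\frac{D}{4m}}$ whenever $D=df^2$ and $\gcd(f,D_0)=1$, with $r$ the residue attached to $D$ as in the conjecture.

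Next I would assemble these multiplicities into a generating function. Since the $F^{(\pd)}_{h}=(F^{(\pd)}_{h,r})$ have rational integer Fourier coefficients, the vector-valued series
\begin{align}\label{eqn:dscprp-mchi}
	m^{(\pd)}_\chi(\tau):=\frac{1}{\#G^{(\pd)}}\sum_{h\in G^{(\pd)}}\overline{\chi(h)}\,F^{(\pd)}_{h}(\tau)
\end{align}
is again a weakly holomorphic vector-valued modular form of weight $\tfrac12$, with rational Fourier coefficients, whose component indexed by $r$ has $q^{\frac{D}{4m}}$-coefficient equal to the multiplicity $[\,M:W^{(\pd)}_{r,\frac{D}{4m}}\,]$; it transforms under $\widetilde\Gamma_0(N)$ with $N$ dividing a fixed integer built from $\#G^{(\pd)}$ and the characters $\rho^v_{n|h}$ of \S~\ref{app:mts-chr}. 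By the (higher-level) $D_0$-optimality \eqref{eqn:int:penumbral_optimality} and lambency condition \eqref{eqn:int:penumbral_lambency_condition} satisfied by the McKay--Thompson series, the only possible pole of $m^{(\pd)}_\chi$ at the cusp $\infty$ sits in degree $\tfrac{D_0}{4m}$, with coefficient $C^{(\pd)}(D_0,r_0)\cdot[\,M:W^{(\pd)}_{r_0,\frac{D_0}{4m}}\,]$, and this vanishes for every row of Table~\ref{tab:mns-mds:dscprp} because $W^{(\pd)}_{r_0,\frac{D_0}{4m}}$ is a multiple of the trivial module there (cf.\ \eqref{eqn:mns-mds:breveWpdfromWpd} and Tables~\ref{tab:pmo-gps:penumbralgroupsD0m3},~\ref{tab:pmo-gps:penumbralgroupsD0m4}). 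Thus $m^{(\pd)}_\chi$ is holomorphic at $\infty$; by \S~\ref{app:mts-sng} it has poles of bounded order at the remaining cusps (and it must have some such pole, for otherwise \cite{MR0472707} would force it to be a combination of Thetanullwerte, whose $q$-expansions are supported on square exponents, contradicting the conjecture for the rows with $d$ not a square), so it lies in a fixed finite-dimensional space of weakly holomorphic weight-$\tfrac12$ forms. The conjecture is now: \emph{the $q^{\frac{df^2}{4m}}$-coefficient of $m^{(\pd)}_\chi$ is non-zero for every $f$ with $\gcd(f,D_0)=1$.}

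For the non-vanishing I would argue on two fronts, and it is their calibration that I expect to be the crux. First, the hypothesis $\gcd(f,D_0)=1$ makes every prime $p\mid f$ a good prime for the half-integral weight Hecke operators $T_{p^2}$ acting in the relevant Kohnen plus space \eqref{eqn:pre-mod:Mwhpluskmnchi}; the attendant recursion for $T_{p^2}$-coefficients --- which in weight $\tfrac12$ is governed by Legendre-symbol data rather than by the $p$-th coefficient of a higher-weight newform, a genuine simplification over the umbral setting --- reduces the statement for general $f$ coprime to $D_0$ to the cases of bounded $f$, and these are confirmed by direct inspection of $W^{(\pd)}$ (equivalently, of the Fourier data of \S\S~\ref{sec:mns-fms},~\ref{app:mts-cff}). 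Second, an effective form of the asymptotic equidistribution of the irreducibles of $G^{(\pd)}$ among the homogeneous components of $W^{(\pd)}$ --- obtained by substituting the Rademacher expansion \eqref{rad-fourier} into \eqref{eqn:dscprp-mchi} and bounding the resulting sums of Kloosterman type against the main Bessel term --- shows, for $D_0=-3$ where all multiplicities in $W^{(\pd)}_{r,\frac{D}{4m}}$ share a fixed sign, that $[\,M:W^{(\pd)}_{r,\frac{D}{4m}}\,]$ is non-zero once $D$ exceeds an explicit bound, leaving a finite list of small $f$ to check by machine. The obstacle is that neither input is automatic: the $T_{p^2}$-recursion at a good prime can still output $0$ at an exceptional $p$ unless one supplies a non-vanishing statement for the coefficients of the auxiliary (integral weight) form entering the Shimura-type correspondence, and the error term in the effective equidistribution must be driven below the smallest value of $\tfrac{df^2}{4m}$ not already covered by the arithmetic reduction, uniformly over the rows of Table~\ref{tab:mns-mds:dscprp} (while for $D_0=-4$ the loss of sign-definiteness removes the simplest route to the equidistribution conclusion). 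Controlling all of this simultaneously is precisely what remains to be done, which is why the discriminant property is recorded here as a conjecture, in keeping with its status in umbral moonshine (\S~6.4 of \cite{MUM}) and Thompson moonshine \cite{MR3521908}.
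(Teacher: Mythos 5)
The statement you are addressing is recorded in the paper as a conjecture, and you correctly treat it as such, offering a strategy rather than a proof; but the strategy you outline is genuinely different from the partial proof the paper actually supplies, and it is worth seeing why the paper's route closes cases that yours, as written, cannot. The paper's argument (for, e.g., $d=5$ and $\pd=(-3,1)$) does not try to show that the multiplicity of the distinguished conjugate pair is non-zero directly. Instead it shows the multiplicity is \emph{odd}: Gannon's argument (Theorem 2 of \cite{MR3539377}) gives that every self-dual irreducible occurs with \emph{even} multiplicity in $W^{(\pd)}_{r,\frac{D}{4}}$ for $D>0$, so after inspecting the character table of $\Th$ the whole conjecture for $d=5$ reduces to the single arithmetic statement that $C^{(\pd)}_g(D,r)$ is odd for $o(g)=15$ and $D=5f^2$ with $\gcd(f,3)=1$. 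That in turn is a congruence modulo $2$ between $\breve F^{(\pd)}_g$ and an explicit combination of theta series (see (\ref{eqn:msn-mds:dsc_D0-3ell1d5})), which Sturm's bound converts into a finite check --- exactly the mechanism already used in the proof of Theorems \ref{thm:mns-mds:D0-3} and \ref{thm:mns-mds:D0-4}. The parity trick is what buys everything: it replaces ``non-vanishing of infinitely many coefficients of a weakly holomorphic form'' (a hard analytic problem) with ``a mod-$2$ identity of modular forms'' (a finite computation).

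Your proposal, by contrast, attacks non-vanishing head-on via the multiplicity generating function $m^{(\pd)}_\chi$, half-integral weight Hecke operators, and effective equidistribution from the Rademacher/Kloosterman expansion. The two difficulties you flag are real and, in my view, fatal to the approach as a route to an actual proof of any case: the $T_{p^2}$ recursion does not propagate non-vanishing along $f\mapsto fp$ unless $m^{(\pd)}_\chi$ is an eigenform (it is not) or unless one controls the image under a Shimura-type lift, and the equidistribution estimate only yields non-vanishing for $D$ beyond an effective but typically enormous threshold, with no sign-definiteness available at $D_0=-4$. So while your framework is a reasonable (and more ambitious) way to think about why the conjecture should be true for \emph{all} $f$, it does not recover even the cases the paper settles. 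The missing idea, concretely, is the reduction from non-vanishing to parity: prove the multiplicity is odd rather than merely non-zero, using the even-multiplicity statement for self-dual constituents together with a Sturm-bounded congruence for the relevant McKay--Thompson series.
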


\begin{table}[ht]
\begin{small}
\begin{center} 
\caption{\small \label{tab:mns-mds:dscprp} {Parameters for the discriminant property of penumbral moonshine. }}
\begin{tabular}{c c | c c}
\toprule
$D_0$ 	&$\ell$	&$d$&$D_1$ \\
\midrule
$-3$		&$1$		&$4$		&$-12$\\		
		&		&$5$		&$-15$\\
		&		&$8$		&$-24$\\
		&		&$9$		&$-27$\\
		&		&$13$	&$-39$\\
\midrule
$-3$		&$3+3$	&$4$		&$-3$, $-12$\\
		&		&$13$	&$-39$\\	
\midrule
$-4$		&$1$		&$4$		&$-16$\\
		&		&$8$		&$-32$\\
\midrule
$-4$		&$2+2$	&$4$		&$-4$, $-16$\\
\bottomrule
\end{tabular}
\qquad
\begin{tabular}{c c | c c}
\toprule
$D_0$ 	&$\ell$	&$d$&$D_1$ \\
\midrule
$-3$		&$13+13$	&$13$	&$-3$\\
\midrule
$-4$		&$1$		&$5$		&$-40$\\
		&		&$12$	&$-12$\\
\midrule
$-4$		&$2+2$	&$16$	&$-4$, $-16$\\
\midrule
$-4$		&$5+5$	&$5$		&$-8$\\
\midrule
$-4$		&$13+13$	&$13$		&$-4$\\
\bottomrule
\end{tabular}
\end{center}
\end{small}
\end{table}

At least some cases of Conjecture \ref{con:msn-mds:dscprp} may be proved by standard methods. 
For example, for $\pd=(-3,1)$, we may 
apply the argument of the proof of Theorem $2$ in Gannon's work \cite{MR3539377} to show that if $D>0$ and $M$ is an irreducible $G^{(\pd)}$-module that is isomorphic to its dual, then $M$ appears with even multiplicity in $W^{(\pd)}_{r,\frac{D}{4}}$. 
Then, after an inspection of the character table of $G^{(\pd)}=\Th$, the $d=5$ case of Conjecture \ref{con:msn-mds:dscprp}, for $\pd=(-3,1)$, comes down to a verification that the Fourier coefficient $C^{(\pd)}_g(D,r)$ (cf.\ (\ref{eqn:int-pfm:Fpdr_Fourier}) and (\ref{eqn:int-pmo:Fpdgr})) is odd, when $o(g)=15$ and $D=5f^2$, with $f$ coprime to $3$.
That statement follows if we can show that the Fourier coefficients of
\begin{gather}\label{eqn:msn-mds:dsc_D0-3ell1d5}
\breve F^{(\pd)}_g(\tau)+\theta(\tau)+\frac12\theta(5\tau)-\frac12\theta(45\tau)
\end{gather} 
are all even, when $o(g)=15$ and $\theta(\tau):=\sum_n q^{n^2}$. 
Similar to the proof of Theorems \ref{thm:mns-mds:D0-3} and \ref{thm:mns-mds:D0-4}, this may be verified by multiplying 
(\ref{eqn:msn-mds:dsc_D0-3ell1d5}) by a suitably chosen cusp form, and using Sturm's bound \cite{MR894516} to reduce to a finite check.
(These steps, except for the one due to Gannon, are essentially the same as those that are employed in \cite{MR3231314}, where the discriminant property of Mathieu moonshine is considered.) 

Our purpose in formulating the discriminant properties of penumbral moonshine as a conjecture is to emphasize that what we are really looking for is a constructive proof, that somehow incorporates the arithmetic of the number fields $\QQ(\sqrt{D_1})$. Indeed, we have similar hopes for the statements of Theorems \ref{thm:mns-mds:D0-3} and \ref{thm:mns-mds:D0-4}. Apart from the original vertex operator construction \cite{FLMPNAS,FLMBerk,FLM} of Frenkel, Lepowsky and Meurman (see \S~\ref{sec:int-mns}), and the vertex operator constructions that have proven relevant to 
umbral moonshine (see \cite{MR3922534,MR3995918,MR3649360,MR3465528,MR3859972}), the discriminant property of Conjecture \ref{con:msn-mds:dscprp} is perhaps the most powerful hint we have so far, as to the form that a constructive 
proof of Theorems \ref{thm:mns-mds:D0-3} and \ref{thm:mns-mds:D0-4} might 
take.

The reader will note that we have divided Table \ref{tab:mns-mds:dscprp} into two parts, the cases of the first part being just those for which 
\begin{gather}\label{eqn:mns-mds:D1dD0}
D_1=dD_0
\end{gather}
is an option. 
With one fewer free parameters, these cases are perhaps the more compelling. 
The cases of the second part stray from this, to a lesser or greater extent. For example, there is just a factor of $2$ separating $D_1=40$ and $dD_0=20$, when $\pd=(-4,1)$ and $d=5$. On the other hand, the factor $d$ completely disappears from $D_1$ when $\ell=5+5$ or $\ell=13+13$. Is there a natural modification of (\ref{eqn:mns-mds:D1dD0}), that incorporates the level of $\ell$?

Note also that all the values of $D_1$ in Table \ref{tab:mns-mds:dscprp} are discriminants. That is, they are congruent to $0$ or $1$ modulo $4$. For $\pd=(-3,1)$, the cases that $d$ is $5$, $8$ or $13$ are further distinguished in that the corresponding discriminants 
are fundamental (see (\ref{eqn:int-pfm:D0fund})). These cases of the discriminant property were discussed already in \cite{MR3521908}. (See Conjectures 4.3 and 4.4 of op.\ cit.)

\subsection{Lattices}\label{sec:mns-lts}

Historically, special lattices have played a significant role in 
moonshine. Examples include the role of the Leech lattice and the $E_8$ root lattice
in monstrous moonshine and Conway moonshine \cite{Dun_VACo,MR3376736}, respectively, and the classification of cases of umbral moonshine by Niemeier lattices (cf.\ (\ref{eqn:Gell})). Preliminary evidence suggests that lattices also play some role in penumbral moonshine, although the full connection remains to be elucidated. We collect here
some relevant facts about lattices and their automorphism groups and indicate some important structures that we expect to be relevant for penumbral moonshine.

Recall that a {\em lattice} $L$ is a free $\ZZ$-module equipped with a symmetric bilinear form $( \cdot, \cdot ):L\times L\to\RR$. A lattice is {\em positive-definite} if this form
induces a positive-definite inner product on the vector space $L_\RR = L \otimes_\ZZ \RR$. A lattice is {\em integral} if
$( \lambda, \mu ) \in \ZZ$ for all $\lambda, \mu \in L$, and it is {\em even} if $(\lambda, \lambda )\in 2 \ZZ$ for all $\lambda \in L$. The {\em dual lattice} $L^\ast$
is defined as
\begin{gather}
L^\ast = \{ \lambda^\ast \in L_\RR \mid (\lambda^\ast, \lambda ) \in \ZZ \text{ for all } \lambda \in L \}.
\end{gather}
If $L$ has rank $n$ then by choosing a basis $\lambda_1, \dots,\lambda_n$ we can express elements of  $L$ as 
\begin{gather}
\lambda = \sum_{i=1}^n \xi_i \lambda_i, \xi_i \in \ZZ.
\end{gather}
We then uniquely determine a matrix $A=(A_{ij})$, called the {\em Gram matrix} of $L$, by writing the norm of a lattice vector as
\begin{gather}
( \lambda, \lambda ) = \sum_{i,j} \xi_i A_{ij} \xi_j.
\end{gather}

The automorphism group of a lattice $\mathrm{Aut}(L)$ is defined as the set of elements $g \in \textsl{GL}_n(\QQ)$ such that $L g = L$ and $g A g^{\mathrm{t}} = A$. (Here the superscript in $g^{\rm t}$ denotes transposition of matrices.) A lattice $L$ is said to be {\it k-modular} if there is a matrix $T  \in \textsl{GL}_n(\QQ)$
with $L= L^* T$ and $ T A T^{\mathrm{t}}= k A$, and a $1$-modular lattice is usually called {\em unimodular}.  We will see below that the lattices $\Lambda^{(\uplambda)}$ that we have been
able to associate to examples of penumbral moonshine are all $m$-modular, where $m$ is the level of 
$\ell$, when $\pd=(D_0,\ell)$.

Special lattices of particular relevance to penumbral moonshine are picked out by the condition of {global irreducibility} that
was developed in \cite{MR1369418} following work of Thompson \cite{MR0399193}. 
Global irreducibility is the generalization of a property which we will follow \cite{MR1732353} 
in referring to as {Thompson's condition}. We define this simpler notion first. Let $G$ be a finite group and $L$ a torsion free $\mathbb{Z}[G]$-module of finite rank. We say that the pair $(G,L)$ satisfies {\em Thompson's condition} if $L/pL$ is irreducible as an $\mathbb{F}_p[G]$-module, for every prime $p$. We note that for such a pair, if $(\cdot,\cdot)$ is a $G$-invariant integral symmetric bilinear form defined on $L$, then the lattice $(L,(\cdot,\cdot))$ is $k$-modular for some integer $k$ 
\cite{MR399257},
and so we immediately connect back to the $k$-modularity foreshadowed in the previous paragraph.

Pairs $(G, L)$ obeying Thompson's condition are very rare. The three best known examples are
$(W_{E_8}, \Gamma_8)$ with $\Gamma_8$ the $E_8$ root lattice and $W(E_8)$ its associated Weyl group, $(2.\Co_1,\Lambda)$ with $\Lambda$ the Leech lattice and $\Co_1$ the first Conway group \cite{MR0237634,MR0248216}, and $(\ZZ_2\times \Th, \Lambda_{248})$ with $\Lambda_{248}$ the rank $248$ unimodular Thompson--Smith lattice and $\Th$ the sporadic simple Thompson group. In light of Thompson moonshine \cite{MR3521908}, we label the third of these as $\Lambda^{(-3,1)} := \Lambda_{248}$ and associate it to the $\uplambda=(-3,1)$ case of penumbral moonshine. Note in particular that the unimodularity (i.e.\ the 1-modularity) of $\Lambda^{(-3,1)}$ matches the index of the weakly skew-holomorphic Jacobi forms which carry Thompson moonshine, or equivalently, the level of the Fricke genus zero 
lambency $\ell=1$.

To find lattices associated to other cases of penumbral moonshine it is necessary to consider the more general notion of global irreducibility, as formulated
by Gross \cite{MR1071117}. 
To define this, again let $G$ be a finite group, and let $V$ be a finite-dimensional $\QQ[G]$-module such that the $\RR[G]$-module $V\otimes_\QQ\RR$ is irreducible. Write $K=\mathrm{End}_G(V)$ for the algebra consisting of the $\alpha \in \mathrm{End}_\QQ(V)$ which satisfy $g\alpha = \alpha g$ for all $g\in G$. (In  the previous paragraph, we took $V = L\otimes_\ZZ\QQ$, in which case $K = \QQ$.) Let $R$ be a maximal order in $K$ and $\Lambda$ an $R[G]$-lattice in $V$, i.e.\ a free $\mathbb{Z}$-submodule of rank $n=\mathrm{dim}_\QQ(V)$ inside $V$ which is mapped by $R$ and $G$ to itself. Following op.\ cit.\ we say that $V$ is {\em globally irreducible} if $V_{\mathfrak{p}}:= \Lambda/\mathfrak{p}\Lambda$ is an irreducible representation of $G$ over $R/\mathfrak{p}$, for every maximal ideal $\mathfrak{p}$.

Groups and lattices obeying this more general
condition of global irreducibility have been widely studied. See e.g.\ \cite{MR1071117, MR1308713, MR1432536,MR1732353}.
An example of such a lattice with $K= \QQ[\sqrt{-7}]$ is described in \cite{MR1071117} and in \cite{MR1722413}. The group $\PSL_2(\FF_7)$ (denoted $L_2(7)$ in Table \ref{tab:pmo-gps:penumbralgroupsD0m3}) has a $3$-dimensional irreducible complex representation $\psi$ defined over $K$, and $\psi \oplus \bar \psi$ is a globally irreducible representation of dimension $6$. The associated lattice is a rank $6$, $7$-modular lattice with automorphism group $\ZZ_2 \times \PGL_2(\FF_7)$ which arises from the theta polarization of the Klein quartic, the genus $3$ Riemann surface with maximal automorphism group $\PSL_2(\FF_7)$. We denote this lattice as $\Lambda^{(-3,7+7)}$ and associate it to penumbral moonshine at $\uplambda = (-3,7+7)$, which, according to Theorem \ref{thm:mns-mds:D0-3}, furnishes moonshine for the group $G^{(-3,7+7)}=\PSL_2(\FF_7)$. We again emphasize that the 7-modularity of $\Lambda^{(-3,7+7)}$ matches nicely onto the fact that the 
level of the lambency $\ell=7+7$ in question is $7$.

A number of globally irreducible pairs of lattices and groups can be constructed using Lie theory \cite{MR0399193,MR1369418}. See \cite{MR1308713} for detailed constructions.  Every complex simple Lie algebra ${\cal L}$, viewed as a vector space, can be decomposed into a direct sum of Cartan subalgebras
\begin{gather}
{\cal L} = \bigoplus_{i=0}^h {\cal H}_i
\end{gather}
with $h$ the Coxeter number. The Killing form is non-degenerate on ${\cal L}$ and this can be used to equip  a $\ZZ$-module $\Lambda \subset {\cal L}$ 
with a symmetric bilinear form. In the case that ${\cal L}$ is the Lie algebra of $E_8$, Thompson and Smith used this idea to construct the Thompson--Smith lattice $\Lambda^{(-3,1)}$ (see \cite{MR0409630}). An important ingredient in their construction was the fact that the $248$-dimensional representation of $\Th$ is globally irreducible.  Related constructions for other Lie algebras can be
found in \cite{MR1369418, MR1308713}. One other example is of particular interest for penumbral moonshine. Taking ${\cal L}$ to the $14$-dimensional Lie algebra of $G_2$ leads to a rank $14$, $3$-modular lattice with automorphism group $\ZZ_2\times G_2(3)$ (see \cite{MR2997018} for a construction), which acts irreducibly via its $14$-dimensional representation. We identify this lattice with the lattice $\Lambda^{(-3,3+3)}$ corresponding to the $\pd=(-3,3+3)$ case of penumbral moonshine. 
As we've seen, the group at this lambency 
takes the form $3.G_2(3)$ (cf.\ (\ref{eqn:pmo-gps:3G23seq})).

Taking instead the Lie algebra of $F_4$ gives rise to a rank $52$, $2$-modular lattice with automorphism group $2.F_4(2)$ (cf.\ (\ref{eqn:mns-gps:2F42seq}) and see \cite{MR1830550}). The Tits group ${}^2 F_4(2)'$ is a subgroup of the automorphism group and acts on the lattice via a globally irreducible representation of dimension $26$ over $\QQ[\sqrt{-2}]$. We associate this lattice
with the lattice $\Lambda^{(-4,2+2)}$ of penumbral moonshine, and find that we can take the penumbral group to be the larger group $G^{(-4,2+2)}= {}^2 F_4(2)$, which contains ${}^2 F_4(2)'$ as its derived subgroup with index 2. 

Further information
about the lattices we have discussed can be found in the online database \cite{catalogueoflattices}. At the time of writing, there are several cases of penumbral moonshine to which we have not been able to attach a corresponding lattice. 
For example, we don't have a candidate in case $\pd=(4,-1)$, where the corresponding representation of $G^{(\pd)}$ is virtual (cf.\ (\ref{eqn:int:F-41})). 
There is also the curious fact of a $5$-modular lattice of rank $8$ 
(called $Q8(1)$ in \cite{catalogueoflattices}, cf.\ also Example 6.2 of \cite{MR1432536}). Although the rank is right for $\pd=(-4,5+5)$, according to (\ref{eqn:mns-mds:Wpd00LLpd}) and Table \ref{tab:pmo-gps:penumbralgroupsD0m4}, the automorphism group is rather far from the penumbral group $2.S_6.2$ (cf.\ (\ref{eqn:pmo-gps:2M12seq})) at this lambdency.
Clearly it is desirable to develop a more systematic understanding of the
role that modular lattices play in penumbral moonshine.

%---------------------------------------------------------------------------------------%
\section{Summary and Outlook}\label{sec:sum}
%---------------------------------------------------------------------------------------%

We have presented an overview of penumbral moonshine, including a concrete description of the corresponding McKay--Thompson series for $D_0=-3$ and $D_0=-4$. Moreover, we have proven that these distinguished modular forms arise from virtual graded modules. 
In many
ways the structure of penumbral moonshine parallels the structure of umbral moonshine, but in such a way that mock modular forms do not appear. 
However, many elements of penumbral moonshine need to be better understood. 

The most important open problem is the explicit construction of the 
penumbral moonshine modules that we have discussed. Do CFTs and VOAs also play a role here, or are some other structures required?
In the case of Mathieu moonshine, the Conway moonshine module serves as a ``fake'' moonshine VOA---in the sense that it is close, but not quite right (see \cite{MR3465528})---and this leads us to concrete constructions of moonshine modules, at least for some cases of umbral moonshine. Are there analogous ``fake'' theories for any of the cases of penumbral moonshine?

Also, it would be interesting to see examples beyond those we have given at $D_0=-3$ and $D_0=-4$. To what extent does penumbral moonshine occur at other lambdencies? We comment briefly on VOA constructions which yield 
graded representations of finite groups at lambdencies with $D_0<-4$ in \S~\ref{app:non}. However, these examples appear to be qualitatively different from penumbral moonshine at $D_0=-3$ and $D_0=-4$. 

An important yet-to-be-realized development is the 
determination of the algebraic objects that are meant to be organizing the instances of penumbral moonshine, as the Niemeier lattices do (\ref{eqn:Gell}) for umbral moonshine. We have provided preliminary evidence that modular lattices might be involved, but they do not appear to be involved in all cases.

Finally, we acknowledge the possibility that 
it might be possible to improve upon some of the specifications we have made here. The rapid growth in the coefficients of the forms involved (see the tables in \S~\ref{app:mts-cff}), the existence of theta series ambiguities in these forms (cf.\ (\ref{eqn:int-pfm:frthetamr0})), and the absence of an organizing family of algebraic objects (cf.\ \S~\ref{sec:mns-lts}), are three factors which make the pursuit of penumbral moonshine especially challenging, and especially fascinating.

%------------------------------------------------------------------------%
\appendix
%------------------------------------------------------------------------%

\section{Some Non-Examples}\label{app:non}

Relationships between modular forms and finite groups are a relatively common occurrence. For example, one expects such a connection any time there is a (suitably nice) vertex operator algebra with a non-trivial group of automorphisms. On the other hand, phenomena which earn the name ``moonshine'' are special, distinguished by properties which set them apart from the generic situation. For example, as we have stressed in the introduction, \S~\ref{sec:int}, monstrous moonshine is distinguished by its genus zero property, and genus zero groups also play a central role in characterizing the possible occurrences of  umbral and penumbral moonshine. Another feature which further privileges umbral and penumbral moonshine is their discriminant property, discussed in \S~\ref{sec:mns-mds}.

In generalizing penumbral moonshine to lambencies beyond $D_0=-3$ and $D_0=-4$, we can expect to come across examples of weakly skew-holomorphic Jacobi forms which are related to finite groups, but which should not be thought of as belonging to the penumbral family. Without attempting to be complete, it is the purpose of this appendix to illustrate this in a few familiar examples, which all occur at $D_0<-4$.

\subsection*{Monstrous moonshine at $D_0=-23$} 

We will start our discussion at the lambdency $\uplambda = (-23,6+6)$. It is possible to show that the 
vector-valued modular form 
\begin{align}\label{skew-holomorphic monster function}
f(\tau) = (0,J(\tau)\eta(\tau),0,0,0,-J(\tau)\eta(\tau),0,-J(\tau)\eta(\tau),0,0,0,J(\tau)\eta(\tau))
\end{align}
satisfies the conditions (\ref{eqn:int:penumbral_optimality}) and (\ref{eqn:int:penumbral_lambency_condition}), where $J(\tau) = q^{-1}+0+196884q + \cdots$ is the elliptic modular invariant (\ref{eqn:int-mns:ellmodinv}), and $\eta(\tau):=q^{\frac{1}{24}}\prod_{n>0}(1-q^n)$ is the Dedekind eta function. 
From the standard connection between $J$ and the monster group $\mathbb{M}$ (see \S~\ref{sec:int-mns}), we expect this function to exhibit monstrous moonshine. However, the additional factor of the eta function alters the standard interpretation slightly, as follows. The monster CFT $V^\natural$ contains a Virasoro subalgebra at central charge $c=24$, and therefore its partition function decomposes into characters $\chi^{(c)}_h(\tau)$ of the Virasoro algebra, 
\begin{align}
J(\tau) = \sum_{h\geq 0}n(h) \chi^{(24)}_h(\tau),
\end{align}
where $n(h)$ is the number of Virasoro primaries of conformal dimension $h$, and 
\begin{align}\label{virasoro characters}
\chi^{(24)}_h(\tau) = \begin{cases}
\frac{(1-q)q^{-\frac{23}{24}}}{\eta(\tau)}, & h=0\\
\frac{q^{h-\frac{23}{24}}}{\eta(\tau)}, & h>0 .
\end{cases}
\end{align}
The Virasoro primaries at a given conformal dimension transform into one another under the action of the automorphism group, so $n(h)$ will still decompose into dimensions of irreducible representations of the monster. Moreover, from the explicit expressions for the Virasoro characters in \eqref{virasoro characters}, we see that any of the non-zero components of $f(\tau)$ can (almost) serve as the generating function for the numbers $n(h)$, 
\begin{align}
J(\tau)\eta(\tau) = -q^{\frac{1}{24}}+ q^{-\frac{23}{24}}\sum_{h\geq 0}n(h)q^{h},
\end{align}
up to the term $-q^{\frac{1}{24}}$ appearing on the right hand side. Similarly, the functions 
\begin{align}
T_g(\tau)\eta(\tau)=-q^{\frac{1}{24}}+q^{-\frac{23}{24}}\sum_{h\geq 0} 
\tr(g|\textsl{VP}^\natural_h)
q^{h},
\end{align}
with $T_g(\tau)$ the McKay-Thompson series of the moonshine module $V^\natural$, can be embedded into weakly skew-holomorphic Jacobi forms of higher level and thought of as encoding the graded characters of an auxiliary $\mathbb{M}$-module, $\textsl{VP}^\natural = \bigoplus_{h\geq 0} \textsl{VP}_h^\natural$, where $\textsl{VP}_h^\natural$ is the space of Virasoro primaries of conformal dimension $h$ in $V^\natural$.

The additional term $-q^{\frac{1}{24}}$ present in all these functions contributes, what would be seen by someone unaware of the monster CFT, a mild virtuality of the $\mathbb{M}$-module (specifically, a trivial representation would come in with a minus sign at the grading $\frac{1}{24}$). The $\mathbb{M}$-module $\textsl{VP}^\natural$ one attaches to this function has otherwise positive multiplicities when decomposing into $\mathbb{M}$-irreps, and the ``true'' $\mathbb{M}$-module $V^\natural$ is of course completely positive.

\subsection*{Baby monster CFT at $D_0=-15$}

We can obtain another example in the same spirit by going to $D_0=-15$ and $m=4$. We construct the function
\begin{align}
f(\tau) = \Omega (R^{(4,-15,1)}_{1}(\tau)-16\theta^0_4(\tau))
\end{align}
where $R^{(4,-15,1)}_1(\tau)$ is the Rademacher sum of equation \eqref{rad-fourier}, and
\begin{equation}
    \Omega=\frac12(\Omega_4(1)-\Omega_4(2)+\Omega_4(4))
\end{equation}
is a projector onto an irreducible subrepresentation of the $m=4$ Weil representation (cf.\ (\ref{eqn:pre-jac:ztosz}), (\ref{eqn:pre-jac:Jwshkm_decomp})). 
We find that this function takes the shape 
\begin{align}
f(\tau) =  (f_0(\tau),f_1(\tau),0,f_3(\tau),-f_0(\tau),f_3(\tau),0,f_1(\tau))
\end{align}
with the non-trivial components given (up to a Dedekind eta function factor) by the characters $\chi_{\textsl{V}\mathbb{B}^\natural(\alpha)}(\tau)$ of the baby monster CFT
\cite{Hoehn2007}
\begin{align}
\begin{split}
f_0(\tau) = \chi_{\textsl{V}\mathbb{B}^\natural(2)}(\tau)\eta(\tau)&= 962566q +10506240q^2+410132480q^3+\cdots \\
f_1(\tau) = \chi_{\textsl{V}\mathbb{B}^\natural(0)}(\tau)\eta(\tau)&=q^{-\frac{15}{16}}(1-q+96255q^2+9550635q^3+\cdots) \\
f_3(\tau) = -\chi_{\textsl{V}\mathbb{B}^\natural(1)}(\tau)\eta(\tau)& = -q^{\frac{9}{16}}(4371+1139374q+63532485q^2+\cdots).
\end{split}
\end{align}
The factor of $\eta(\tau)$ requires an interpretation analogous to the one we provided in the context of the monster, but otherwise, we expect (and of course, find) that this weakly skew-holomorphic Jacobi form is related to the representation theory of $2.\mathbb{B}$, the Schur cover of the inner automorphism group of the baby monster VOA $\textsl{V}\mathbb{B}^\natural$ introduced by H\"ohn \cite{Hoehn2007,MR2734692}.

The baby monster VOA can be described as the commutant of a $c=\frac12$ Virasoro subalgebra of the moonshine module. It is possible to construct other commutant subVOAs of $V^\natural$ in a similar manner. One can ask if these more general models have relationships to weakly skew-holomorphic Jacobi forms, like the connection we established for the baby monster VOA. We have found at least two other examples, though we do not provide the details. 
One occurs at 
$D_0=-111$, 
with $m=30$, 
and is related to the Fischer VOA \cite{MR2890302}, whose modules transform with respect to the triple cover $3.\textsl{Fi}_{24}'$ of its inner automorphism group. 
The other occurs at 
$D_0=-44$, 
with $m=12$, 
and is related to a VOA referred to as $\widetilde{\mathcal{W}}_{\mathbb{Z}_{4\mathrm{A}}}$ in \cite{bae2020conformal}, which can be obtained as the charge conjugation orbifold of a lattice VOA attached to a particular 23-dimensional sublattice of the Leech lattice, and whose modules transform under representations of a quadruple cover $4.2^{22}.\textsl{Co}_3$ of its inner automorphism group. See \cite{bae2020conformal} for a further discussion of these models and their characters.

\section{McKay--Thompson Series}\label{app:mts}

In this section we present data that characterizes the McKay--Thompson series discussed in \S~\ref{sec:mns-fms}. 

In order to present this data we need names for the conjugacy classes of the penumbral groups $G^{(\pd)}$ arising. We adopt the conventions of \cite{GAP4} for this purpose. Specifically, for each of the lambdencies $\pd$ we study (see (\ref{eqn:pmo-gps:D0-3}--\ref{eqn:pmo-gps:D0-4})) such that the corresponding group $G^{(\pd)}$ is non-cyclic (see Tables \ref{tab:pmo-gps:penumbralgroupsD0m3} and \ref{tab:pmo-gps:penumbralgroupsD0m4}), we use the class names, and character tables, that can be accessed in \cite{GAP4} by using the character table names given in Table \ref{tab:mts:chrtblnms}.
\begin{table}[ht]
\begin{small}
\begin{center}
\caption{\small \label{tab:mts:chrtblnms} Character table names.}
\begin{tabular}{cc|c|c}
\toprule
$D_0$&$\ell$&$G^{(\pd)}$&Table name\\
\midrule
$-3$	&$1$		&$\Th$&{\tt "Th"}\\
	&$3+3$	&$3.G_2(3)$&{\tt "3.G2(3)"}\\
	&$7+7$	&$L_2(7)$&{\tt "L3(2)"}\\
\midrule
$-4$	&$1$		&$2.F_4(2).2$&{\tt "Isoclinic(2.F4(2).2)"}\\
	&$2+2$	&$^2F_4(2)$&{\tt "2F4(2)'.2"}\\
	&$5+5$	&$2.S_6.2$&{\tt "2.M12M4"}\\
	&$10+2,5,10$	&$\ZZ_5:\ZZ_4$&{\tt "5:4"}\\
\bottomrule
\end{tabular}
\end{center}
\end{small}
\end{table}

For the cases that $G^{(\pd)}$ is cyclic it suffices to say that $nA$, $nB$, \&c., denote conjugacy classes of elements of order $n$. 

The data we present is as follows. We specify the levels and characters of the McKay--Thompson series $F^{(\pd)}_g$ (cf.\ (\ref{eqn:int-pmo:Fpdgr})) for all $\pd$ arising (cf.\ \S~\ref{sec:mns-gps}) in the tables in \S~\ref{app:mts-chr}, we specify the theta contributions for 
the McKay--Thompson series with $D_0=-3$ in the tables in \S~\ref{app:mts-tht}, 
we specify the singular parts of the scalar-valued versions $\breve F^{(\pd)}_g$ of the McKay--Thompson series with $D_0=-4$ in the tables in \S~\ref{app:mts-sng}, and specify the first few coefficients of each of the McKay--Thompson series, both for $D_0=-3$ and $D_0=-4$, in the tables in \S~\ref{app:mts-cff}.

In all the tables that follow, the entries in the first rows are (mostly) conjugacy class names. 
Note that we use $nAB\dots$ as a shorthand for the union of conjugacy classes $nA$, $nB$, \dots in these tables.

In the tables in \S~\ref{app:mts-chr}, the entries of the second rows are characters, given in the form $n|h_v$, except that we simply write $n$ in case $h=v=1$. In Table \ref{tab:mts-chr:D0-3ell3+3}, for $\pd=(-3,3+3)$, we also specify the presence, or not, of a matrix $A := \mathrm{diag}(1,-\frac{1}{2},-\frac{1}{2},1,-\frac{1}{2},-\frac{1}{2})$. If this $A$ is present then we take $M_g=A$ in (\ref{eqn:mns-fms:MTRad}). Otherwise we take $M_g$ to be the identity. (In particular, $M_g$ may be omitted from (\ref{eqn:mns-fms:MTRad}) whenever $\pd\neq (-3,3+3)$.)

In the tables in \S~\ref{app:mts-tht}, the entries of the second rows are theta series, given as sums of symbols of the form $\kappa(k^2)$. Such a symbol is to be interpreted as $\kappa\theta_m(k^2\tau)$, where $m$ is the level of the relevant lambency $\ell$. For an example, consider the sum 
$\frac12(9)-\frac32({81})$, 
which is 
attached to the conjugacy class $27BC$ of $G^{(-3,1)}=\Th$ by Table \ref{tab:mts-tht:D0-3ell1}. The corresponding theta series is
$\frac12\theta_1(9\tau)-\frac32\theta_1(81\tau)$.

The tables in \S~\ref{app:mts-chr} and \S~\ref{app:mts-tht} provide all the parameters necessary for the Rademacher sum description (\ref{eqn:mns-fms:MTRad}) of the McKay--Thompson series $F^{(\pd)}_g$ of penumbral moonshine, for $\pd=(D_0,\ell)$ with $D_0=-3$.

The tables in \S~\ref{app:mts-sng} determine the McKay--Thompson series $F^{(\pd)}_g$ of penumbral moonshine, for $\pd=(D_0,\ell)$ with $D_0=-4$, up to theta series, by specifying the singular parts of the scalar-valued functions $\breve F^{(\pd)}_g$ (see (\ref{eqn:mns-mod:breveFpdg})). More precisely, if $m$ is the level of the lambency $\ell$ in question, then $\breve F^{(\pd)}_g$ is a weakly holomorphic modular form of weight $\frac12$ with trivial multiplier for $\Gamma_0(4mnh)$, where $n$ and $h$ are as specified in the tables of \S~\ref{app:mts-chr}. The expansion at infinity takes the form $2q^{-4}+O(1)$ in each case, so we suppress this information from the tables in question. For each non-infinite cusp of $\Gamma_0(4mnh)$ where the function $\breve F^{(\pd)}_g$ does not remain bounded, we specify the corresponding singular part. 
For example, taking $\pd=(-4,1)$ and $g$ in the class $12A$ or $12B$,
we see from Table \ref{tab:mts-sng:D0-4ell1} that the expansion of $\breve F^{(\pd)}_g$ in a neighborhood of the cusp of $\Gamma_0(96)$ represented by $\frac1{24}$ takes the form $4iq^{-1}+O(1)$, and this function is bounded near all other non-infinite cusps. (We have $h=2$ for $g$  in $12A$ or $12B$, according to Table \ref{tab:mts-chr:D0-4ell1}.) 
Note that conjugacy classes whose McKay--Thompson series remain bounded near non-infinite cusps are suppressed from the tables of \S~\ref{app:mts-sng}. Note also that we use $\xi$ as a shorthand for $\ex(\frac1{64})$ in these tables, and we use $\omega$ as a shorthand for $\ex(\frac1{20})$.

With the singular parts in place, by virtue of \S~\ref{app:mts-sng}, the coefficients given in the tables in \S~\ref{app:mts-cff} are sufficient to remove any ambiguity arising from theta series, in the cases that $D_0=-4$. We also present coefficients for the cases that $D_0=-3$ in \S~\ref{app:mts-cff}.

\clearpage

\subsection{Characters}\label{app:mts-chr}

\begin{table}[h!]
\begin{center}
\begin{small}
\caption{\small 
Characters of penumbral moonshine at $\pd=(-3,1)$.
\label{tab:mts-chr:D0-3ell1}}
% [inline block 0: 72 envs, 124840 chars -> data_tex | \begin{tabular}{c|cccccccccc} \toprule...]

\end{tiny}
\end{tabular}
\endgroup

\end{center}
\end{table}

\clearpage

%------------------------------------------------------------------%
%------------------------------------------------------------------%

\setstretch{1.08}

\addcontentsline{toc}{section}{References}
\providecommand{\bysame}{\leavevmode\hbox to3em{\hrulefill}\thinspace}
\providecommand{\MR}{\relax\ifhmode\unskip\space\fi MR }
\providecommand{\MRhref}[2]{
  \href{http://www.ams.org/mathscinet-getitem?mr=#1}{#2}
}
\providecommand{\href}[2]{#2}


\begin{thebibliography}{100}

\bibitem{Anagiannis:2018jqf}
V.~Anagiannis and M.~Cheng, \emph{{TASI Lectures on
  Moonshine}}, PoS \textbf{TASI2017} (2018), 010.

\bibitem{MR3922534}
V.~Anagiannis, M.~Cheng, and S.~Harrison, \emph{{$K3$}
  elliptic genus and an umbral moonshine module}, Comm.\ Math.\ Phys.\ \textbf{366} (2019), no.~2, 647--680. \MR{3922534}

\bibitem{bae2020conformal}
J.-B.~Bae, J.~Harvey, K.~Lee, S.~Lee, and B.~Rayhaun, \emph{Conformal field theories with sporadic group symmetry}, 
Comm.\ Math.\ Phys.\ \textbf{388} (2021), no.~1, 1--105. 

\bibitem{Bor_PNAS}
R.~Borcherds, \emph{Vertex algebras, {Kac}-{Moody} algebras, and the
  {Monster}}, Proceedings of the National Academy of Sciences, U.S.A.\ \textbf{83} (1986), no.~10, 3068--3071.

\bibitem{MR1172696}
\bysame, \emph{Monstrous moonshine and monstrous {L}ie
  superalgebras}, Invent.\ Math.\ \textbf{109} (1992), no.~2, 405--444.
  \MR{1172696}

\bibitem{MR1323986}
\bysame, \emph{Automorphic forms on {${\rm O}_{s+2,2}({\bf R})$} and infinite
  products}, Invent.\ Math.\ \textbf{120} (1995), no.~1, 161--213. \MR{1323986}

\bibitem{MR3729259}
K.~Bringmann, A.~Folsom, K.~Ono, and L.~Rolen, \emph{Harmonic
  {M}aass forms and mock modular forms: theory and applications}, American
  Mathematical Society Colloquium Publications, vol.~64, American Mathematical
  Society, Providence, RI, 2017. \MR{3729259}

\bibitem{Carnahan:2012gx}
S.~Carnahan.
\newblock {Generalized Moonshine IV: Monstrous Lie algebras}.
\newblock arXiv:1208.6254.


\bibitem{MR2793423}
M.~Cheng, \emph{{$K3$} surfaces, {$N=4$} dyons and the {M}athieu
  group {$M_{24}$}}, Commun.\ Number Theory Phys.\ \textbf{4} (2010), no.~4,
  623--657. \MR{2793423 (2012e:11076)}

\bibitem{MR2985326}
M.~Cheng and J.~Duncan, \emph{The largest {M}athieu group
  and (mock) automorphic forms}, String-{M}ath 2011, Proc.\ Sympos.\ Pure Math.,
  vol.~85, Amer.\ Math.\ Soc., Providence, RI, 2012, pp.~53--82. \MR{2985326}

\bibitem{MR3021323}
\bysame, \emph{On {R}ademacher sums, the largest {M}athieu group and the
  holographic modularity of moonshine}, Commun.\ Number Theory Phys.\ \textbf{6}
  (2012), no.~3, 697--758. \MR{3021323}

\bibitem{MR3559204}
\bysame, \emph{Rademacher sums and {R}ademacher series}, Conformal field
  theory, automorphic forms and related topics, Contrib.\ Math.\ Comput.\ Sci.,
  vol.~8, Springer, Heidelberg, 2014, pp.~143--182. \MR{3559204}

\bibitem{MR3995918}
\bysame, \emph{Meromorphic {J}acobi {F}orms of {H}alf-{I}ntegral {I}ndex and
  {U}mbral {M}oonshine {M}odules}, Comm.\ Math.\ Phys.\ \textbf{370} (2019),
  no.~3, 759--780. \MR{3995918}

\bibitem{MR4127159}
\bysame, \emph{Optimal mock {J}acobi theta functions}, Adv.\ Math.\ \textbf{372}
  (2020), 107284. \MR{4127159}

\bibitem{MR3858593}
M.~Cheng, J.~Duncan, S.~Harrison, J.~Harvey,
  S.~Kachru, and B.~Rayhaun, \emph{Attractive strings and
  five-branes, skew-holomorphic {J}acobi forms and moonshine}, J.~High Energy
  Phys. (2018), no.~7, 130, front matter + 28. \MR{3858593}

\bibitem{UM}
M.~Cheng, J.~Duncan, and J.~Harvey, \emph{{Umbral
  Moonshine}}, Commun.\ Number Theory Phys.\ \textbf{8} (2014), no.~2, 101--242.
  \MR{3271175}

\bibitem{MUM}
\bysame, \emph{{Umbral Moonshine and the Niemeier Lattices}}, Res.\ Math.\ Sci.\ \textbf{1} (2014), no.~3, 1--81.

\bibitem{MR3766220}
\bysame, \emph{Weight one {J}acobi forms and umbral moonshine}, J.\ Phys.\ A
  \textbf{51} (2018), no.~10, 104002, 37. \MR{3766220}

\bibitem{MR3366057}
M.~Cheng and S.~Harrison, \emph{Umbral moonshine and {$K3$}
  surfaces}, Comm.\ Math.\ Phys.\ \textbf{339} (2015), no.~1, 221--261.
  \MR{3366057}

\bibitem{MR3832169}
M.~Cheng, S.~Harrison, R.~Volpato, and M.~Zimet,
  \emph{K3 string theory, lattices and moonshine}, Res.\ Math.\ Sci.\ \textbf{5}
  (2018), no.~3, Paper No.~32, 45. \MR{3832169}

\bibitem{MR0237634}
J.~Conway, \emph{A perfect group of order {$8,315,553,613,086,720,000$} and
  the sporadic simple groups}, Proc.\ Nat.\ Acad.\ Sci.\ U.S.A.\ \textbf{61} (1968),
  398--400. \MR{0237634 (38 \#5915)}

\bibitem{MR0248216}
\bysame, \emph{A group of order {$8,315,553,613,086,720,000$}}, Bull.\ London
  Math.\ Soc.\ \textbf{1} (1969), 79--88. \MR{0248216 (40 \#1470)}

\bibitem{MR0338152}
\bysame, \emph{Three lectures on exceptional groups}, Finite simple groups
  ({P}roc.\ {I}nstructional {C}onf., {O}xford, 1969), 1971, pp.~215--247.
  \MR{0338152}

\bibitem{atlas}
J.~Conway, R.~Curtis, S.~Norton, R.~Parker, and R.~Wilson,
  \emph{{Atlas of finite groups. Maximal subgroups and ordinary characters for
  simple groups. With comput.\ assist.\ from J.~G.~Thackray.}}, Oxford: Clarendon
  Press, 1985.

\bibitem{MR554399}
J.~Conway and S.~Norton, \emph{Monstrous moonshine}, Bull.\ London Math.\ Soc.\ \textbf{11} (1979), no.~3, 308--339. \MR{554399 (81j:20028)}

\bibitem{MR1662447}
J.~Conway and N.~Sloane, \emph{Sphere packings, lattices and groups},
  third ed., Grundlehren der Mathematischen Wissenschaften [Fundamental
  Principles of Mathematical Sciences], vol.~290, Springer-Verlag, New York,
  1999, With additional contributions by E.~Bannai, R.~E.~Borcherds, J.~Leech,
  S.~P.~Norton, A.~M.~Odlyzko, R.~A.~Parker, L.~Queen and B.~B.~Venkov.
  \MR{1662447 (2000b:11077)}

\bibitem{MR3231314}
T.~Creutzig, G.~H\"{o}hn, and T.~Miezaki, \emph{The
  {M}c{K}ay-{T}hompson series of {M}athieu moonshine modulo two}, Ramanujan J.\
  \textbf{34} (2014), no.~3, 319--328. \MR{3231314}

\bibitem{Dabholkar:2012nd}
A.~Dabholkar, S.~Murthy, and D.~Zagier, \emph{{Quantum Black Holes, Wall
  Crossing, and Mock Modular Forms}}, arXiv:1208.4074.

\bibitem{MR968697}
L.~Dixon, P.~Ginsparg, and J.~Harvey, \emph{Beauty and the beast:
  superconformal symmetry in a {M}onster module}, Comm.\ Math.\ Phys.\
  \textbf{119} (1988), no.~2, 221--241. \MR{968697 (90b:81119)}

\bibitem{Dong2000}
C.~Dong, H.~Li, and G.~Mason, \emph{Modular invariance of
  trace functions in orbifold theory and generalized {M}oonshine},
  Communications in Mathematical Physics \textbf{214} (2000), 1--56.

\bibitem{MR3435813}
C.~Dong, X.~Lin, and S.-H.~Ng, \emph{Congruence property in
  conformal field theory}, Algebra Number Theory \textbf{9} (2015), no.~9,
  2121--2166. \MR{3435813}

\bibitem{Dun_VACo}
J.~Duncan, \emph{Super-{M}oonshine for {C}onway's largest sporadic group},
  Duke Math.~J.\ \textbf{139} (2007), no.~2, 255--315.

\bibitem{Dun_ArthGrpsAffE8Dyn}
\bysame, \emph{Arithmetic groups and the affine {$E_8$} {D}ynkin diagram},
  Groups and symmetries, CRM Proc.\ Lecture Notes, vol.~47, Amer.\ Math.\ Soc.,
  Providence, RI, 2009, pp.~135--163. \MR{2500559 (2010g:11070)}

\bibitem{Duncan2020}
\bysame, \emph{A short introduction to the algebra, geometry, number
  theory and physics of moonshine}, pp.~1--85, Springer International
  Publishing, Cham, 2020.

\bibitem{MR4139238}
\bysame, \emph{From the {M}onster to {T}hompson to {O}'{N}an}, Vertex operator
  algebras, number theory and related topics, Contemp.\ Math., vol.~753, Amer.\ Math.\ Soc., [Providence], RI, [2020] \copyright 2020, pp.~73--93.
  \MR{4139238}

\bibitem{DunFre_RSMG}
J.~Duncan and I.~Frenkel, \emph{Rademacher sums, moonshine and
  gravity}, Commun.\ Number Theory Phys.\ \textbf{5} (2011), no.~4, 1--128.

\bibitem{MR3375653}
J.~Duncan, M.~Griffin, and K.~Ono, \emph{Moonshine}, Res.
  Math.\ Sci.\ \textbf{2} (2015), Art.~11, 57. \MR{3375653}

\bibitem{MR3433373}
\bysame, \emph{Proof of the umbral moonshine conjecture}, Res.\ Math.\ Sci.\ \textbf{2} (2015), Art.~26, 47. \MR{3433373}

\bibitem{MR3649360}
J.~Duncan and J.~Harvey, \emph{The umbral moonshine module for the
  unique unimodular {N}iemeier root system}, Algebra Number Theory \textbf{11}
  (2017), no.~3, 505--535. \MR{3649360}


\bibitem{pmp}
J.~Duncan, J.~Harvey and B.~Rayhaun, \emph{Modular Products and Modules for Finite Groups}, arXiv:2202.08271.



\bibitem{pmt}
\bysame, \emph{Two New Avatars of Moonshine for the Thompson Group}, arXiv:2202.08277.



\bibitem{pmz}
\bysame, \emph{Skew-Holomorphic Jacobi Forms and Genus Zero Groups},  in preparation.



\bibitem{MR3376736}
J.~Duncan and S.~Mack-Crane, \emph{The moonshine module for
  {C}onway's group}, Forum Math.\ Sigma \textbf{3} (2015), e10, 52. \MR{3376736}

\bibitem{MR3465528}
\bysame, \emph{Derived equivalences of {K}3 surfaces and twined elliptic
  genera}, Res.\ Math.\ Sci.\ \textbf{3} (2016), Art.~1, 47. \MR{3465528}

\bibitem{MR3859972}
J.~Duncan and A.~O'Desky, \emph{Super vertex algebras, meromorphic
  {J}acobi forms and umbral moonshine}, J.~Algebra \textbf{515} (2018),
  389--407. \MR{3859972}

\bibitem{Eguchi2010}
T.~Eguchi, H.~Ooguri, and Y.~Tachikawa, \emph{{Notes on the K3 Surface
  and the Mathieu group $M_{24}$}}, Exp.\ Math.\ \textbf{20} (2011), 91--96.

\bibitem{MR781735}
M.~Eichler and D.~Zagier, \emph{The theory of {J}acobi forms}, Progress in
  Mathematics, vol.~55, Birkh\"{a}user Boston, Inc., Boston, MA, 1985.
  \MR{781735}

\bibitem{MR1722413}
N.~Elkies, \emph{The {K}lein quartic in number theory}, The eightfold way,
  Math.\ Sci.\ Res.\ Inst.\ Publ., vol.~35, Cambridge Univ.\ Press, Cambridge, 1999,
  pp.~51--101. \MR{1722413}

\bibitem{feingold_frenkel}
A.~Feingold and I.~Frenkel, \emph{{A hyperbolic Kac-Moody algebra and
  the theory of Siegel modular forms of genus 2}}, J.~Math.\ Ann.\ \textbf{263}
  (1983), 87--144.

\bibitem{Fer_Genus0prob}
C.~Ferenbaugh, \emph{The genus-zero problem for {$n\vert h$}-type
  groups}, Duke Math.~J.\ \textbf{72} (1993), no.~1, 31--63. \MR{MR1242878
  (94i:20030)}


\bibitem{FLMPNAS}
I.~Frenkel, J.~Lepowsky, and A.~Meurman, \emph{A natural
  representation of the {F}ischer-{G}riess {M}onster with the modular function
  {$J$} as character}, Proc.\ Nat.\ Acad.\ Sci.\ U.S.A.\ \textbf{81} (1984), no.~10,
  Phys.\ Sci., 3256--3260. \MR{MR747596 (85e:20018)}

\bibitem{FLMBerk}
\bysame, \emph{A moonshine module for the {M}onster}, Vertex operators in
  mathematics and physics (Berkeley, Calif., 1983), Math.\ Sci.\ Res.\ Inst.
  Publ., vol.~3, Springer, New York, 1985, pp.~231--273. \MR{86m:20024}

\bibitem{FLM}
\bysame, \emph{Vertex operator algebras and the {M}onster}, Pure and Applied
  Mathematics, vol.~134, Academic Press Inc., Boston, MA, 1988. \MR{90h:17026}

\bibitem{frenkel2020sketch}
I.~Frenkel and R.~Penner, \emph{Sketch of a program for universal
  automorphic functions to capture monstrous moonshine}, arXiv:2012.14220.


\bibitem{Gaberdiel2010}
M.~Gaberdiel, S.~Hohenegger, and R.~Volpato, \emph{{Mathieu
  twining characters for K3}}, JHEP \textbf{1009} (2010), 058, 19 pages.

\bibitem{MR3539377}
T.~Gannon, \emph{Much ado about {M}athieu}, Adv.\ Math.\ \textbf{301} (2016),
  322--358. \MR{3539377}

\bibitem{GAP4}
The GAP~Group, \emph{{GAP -- Groups, Algorithms, and Programming, Version
  4.8.7}}, 2017.

\bibitem{MR1400408}
G.~Glauberman, \emph{On the {S}uzuki groups and the outer automorphisms of
  {$S_6$}}, Groups, difference sets, and the {M}onster ({C}olumbus, {OH},
  1993), Ohio State Univ.\ Math.\ Res.\ Inst.\ Publ., vol.~4, de Gruyter, Berlin,
  1996, pp.~55--72. \MR{1400408}

\bibitem{MR0399248}
R.~Griess, Jr., \emph{The structure of the ``monster'' simple group},
  Proceedings of the {C}onference on {F}inite {G}roups ({U}niv.\ {U}tah, {P}ark
  {C}ity, {U}tah, 1975), Academic Press, New York, 1976, pp.~113--118.
  \MR{0399248 (53 \#3099)}

\bibitem{MR671653}
\bysame, \emph{The friendly giant}, Invent.\ Math.\ \textbf{69} (1982), no.~1,
  1--102. \MR{671653 (84m:20024)}

\bibitem{MR3582425}
M.~Griffin and M.~Mertens, \emph{A proof of the {T}hompson
  moonshine conjecture}, Res.\ Math.\ Sci.\ \textbf{3} (2016), 3:36. \MR{3582425}

\bibitem{MR1071117}
B.~Gross, \emph{Group representations and lattices}, J.~Amer.\ Math.\ Soc.\ \textbf{3} (1990), no.~4, 929--960. \MR{1071117}

\bibitem{MR1369418}
\bysame, \emph{Groups over {${\bf Z}$}}, Invent.\ Math.\ \textbf{124} (1996),
  no.~1-3, 263--279. \MR{1369418}

\bibitem{GKZ87}
B.~Gross, W.~Kohnen, and D.~Zagier, \emph{Heegner points and derivatives of L-series. II.}, 
Math.\ Ann.\ {\bf 278} (1987), no.~1--4, 497--562.

\bibitem{MR3521908}
J.~Harvey and B.~Rayhaun, \emph{Traces of singular moduli and
  moonshine for the {T}hompson group}, Commun.\ Number Theory Phys.\ \textbf{10}
  (2016), no.~1, 23--62. \MR{3521908}

\bibitem{Hoehn2007}
G.~H{\"o}hn, \emph{Selbstduale vertexoperatorsuperalgebren und das babymonster
  (self-dual vertex operator super algebras and the baby monster)}, Bonner
  Mathematische Schriften, Vol.~\textbf{286} (2007), 1--85, Bonn, 1996.

\bibitem{MR2734692}
\bysame, \emph{The group of symmetries of the shorter {M}oonshine
  module}, Abh.\ Math.\ Semin.\ Univ.\ Hambg.\ \textbf{80} (2010), no.~2, 275--283.
  \MR{2734692 (2012a:17052)}

\bibitem{MR2890302}
G.~H{\"o}hn, C.-H.~Lam, and H.~Yamauchi, \emph{Mc{K}ay's {$E_6$}
  observation on the largest {F}ischer group}, Comm.\ Math.\ Phys.\ \textbf{310}
  (2012), no.~2, 329--365. \MR{2890302}

\bibitem{MR1160677}
Y.-Z.~Huang, \emph{Vertex operator algebras and conformal field theory},
  Internat.\ J.~Modern Phys.\ A \textbf{7} (1992), no.~10, 2109--2151.
  \MR{1160677}

\bibitem{kachru2016elementary}
S.~Kachru, \emph{Elementary introduction to Moonshine},
  arXiv:1605.00697.

\bibitem{MR4230542}
M.~Khaqan, \emph{Elliptic curves and {T}hompson's sporadic simple group},
  J.~Number Theory \textbf{224} (2021), 274--306. \MR{4230542}

\bibitem{MR575942}
W.~Kohnen,
\newblock \emph{Modular forms of half-integral weight on {$\Gamma _{0}(4)$}}.
\newblock Math.~Ann., 248(3):249--266, 1980.

\bibitem{MR660784}
\bysame,
\newblock \emph{Newforms of half-integral weight}.
\newblock J.~Reine Angew.~Math., 333:32--72, 1982.

\bibitem{MR1308713}
A.~Kostrikin and P.~Tiep, \emph{Orthogonal decompositions and
  integral lattices}, De Gruyter Expositions in Mathematics, vol.~15, Walter de
  Gruyter \& Co., Berlin, 1994. \MR{1308713}

\bibitem{MR1830550}
W.~Lempken, B.~Schr\"{o}der, and P.~Tiep, \emph{Symmetric
  squares, spherical designs, and lattice minima}, J.~Algebra \textbf{240}
  (2001), no.~1, 185--208, With an appendix by Christine Bachoc and Tiep.
  \MR{1830550}

\bibitem{MR1981614}
W.~McGraw, \emph{The rationality of vector valued modular forms
  associated with the {W}eil representation}, Math.\ Ann.\ \textbf{326} (2003),
  no.~1, 105--122. \MR{1981614}

\bibitem{MR2046807}
M.~Miyamoto, \emph{Modular invariance of vertex operator algebras
  satisfying {$C_2$}-cofiniteness}, Duke Math.~J.\ \textbf{122} (2004), no.~1,
  51--91. \MR{2046807 (2005f:17028)}

\bibitem{catalogueoflattices}
G.~Nebe and N.~Sloane, \emph{A Catalgoue of Lattices}, online at http://www.math.rwth-aachen.de/\textasciitilde Gabriele.Nebe/LATTICES/.

\bibitem{Nie_DefQdtFrm24}
H.-V.~Niemeier, \emph{Definite quadratische {F}ormen der {D}imension
  {$24$} und {D}iskriminante {$1$}}, J.~Number Theory \textbf{5} (1973),
  142--178. \MR{0316384 (47 \#4931)}

\bibitem{generalized_moonshine}
S.~Norton,
\newblock {Generalized Moonshine}.
\newblock {\em Proc.\ Symp.\ Pure Math}, 47:208--209, 1987.

\bibitem{MR1025610}
S.~Norton and R.~Wilson, \emph{The maximal subgroups of {$F_4(2)$} and
  its automorphism group}, Comm.\ Algebra \textbf{17} (1989), no.~11,
  2809--2824. \MR{1025610}

\bibitem{MR2555930}
K.~Ono, \emph{Unearthing the visions of a master: harmonic {M}aass forms and
  number theory}, Current developments in mathematics, 2008, Int.~Press,
  Somerville, MA, 2009, pp.~347--454. \MR{2555930}

\bibitem{MR3573665}
N.~Paquette, D.~Persson, and R.~Volpato, \emph{Monstrous
  {BPS}-algebras and the superstring origin of moonshine}, Commun.\ Number
  Theory Phys.\ \textbf{10} (2016), no.~3, 433--526. \MR{3573665}

\bibitem{MR3708102}
\bysame, \emph{B{PS} algebras, genus zero and the heterotic {M}onster}, J.\ Phys.\ A \textbf{50} (2017), no.~41, 414001, 17. \MR{3708102}

\bibitem{MR0472707}
J.-P.~Serre and H.~Stark, \emph{Modular forms of weight {$1/2$}}, Modular
  functions of one variable, {VI} ({P}roc.\ {S}econd {I}nternat.\ {C}onf.,
  {U}niv.\ {B}onn, {B}onn, 1976), Springer, Berlin, 1977, pp.~27--67. Lecture
  Notes in Math., Vol.~627. \MR{0472707 (57 \#12400)}

\bibitem{Shi_IntThyAutFns}
G.~Shimura, \emph{Introduction to the arithmetic theory of automorphic
  functions}, Publications of the Mathematical Society of Japan, No.~11.
  Iwanami Shoten, Publishers, Tokyo, 1971, Kan{\^o} Memorial Lectures, No.~1.
  \MR{MR0314766 (47 \#3318)}

\bibitem{Sko_Thesis}
N.-P.~Skoruppa, \emph{{\"U}ber den {Z}usammenhang zwischen {J}acobiformen
  und {M}odulformen halbganzen {G}ewichts}, Bonner Mathematische Schriften
  [Bonn Mathematical Publications], 159, Universit{\"a}t Bonn Mathematisches
  Institut, Bonn, 1985, Dissertation, Rheinische
  Friedrich-Wilhelms-Universit{\"a}t, Bonn, 1984. \MR{806354 (87a:11045)}

\bibitem{MR1096975}
\bysame, \emph{Developments in the theory of {J}acobi forms}, Automorphic
  functions and their applications ({K}habarovsk, 1988), Acad.\ Sci.\ USSR, Inst.
  Appl.\ Math., Khabarovsk, 1990, pp.~167--185. \MR{1096975 (92e:11043)}

\bibitem{MR1074485}
\bysame, \emph{Explicit formulas for the {F}ourier coefficients of {J}acobi and
  elliptic modular forms}, Invent.\ Math.\ \textbf{102} (1990), no.~3, 501--520.
  \MR{1074485 (91j:11029)}

\bibitem{MR1116103}
\bysame, \emph{Heegner cycles, modular forms and {J}acobi forms}, S\'em.\ Th\'eor.\ Nombres Bordeaux (2) \textbf{3} (1991), no.~1, 93--116. \MR{1116103
  (92g:11044)}

\bibitem{MR2512363}
\bysame, \emph{Jacobi forms of critical weight and {W}eil representations},
  Modular forms on {S}chiermonnikoog, Cambridge Univ.\ Press, Cambridge, 2008,
  pp.~239--266. \MR{2512363 (2011a:11093)}

\bibitem{MR958592}
N.-P.~Skoruppa and D.~Zagier, \emph{Jacobi forms and a certain space of
  modular forms}, Invent.\ Math.\ \textbf{94} (1988), no.~1, 113--146. \MR{958592
  (89k:11029)}

\bibitem{MR0409630}
P.~Smith, \emph{A simple subgroup of {$M?$} and {$E_{8}(3)$}}, Bull.\ London
  Math.\ Soc.\ \textbf{8} (1976), no.~2, 161--165. \MR{0409630}

\bibitem{MR822245}
S.~Smith, \emph{On the head characters of the {M}onster simple group},
  Finite groups---coming of age ({M}ontreal, {Q}ue., 1982), Contemp.\ Math.,
  vol.~45, Amer.\ Math.\ Soc., Providence, RI, 1985, pp.~303--313. \MR{822245
  (87h:20037)}

\bibitem{MR3101818}
F.~Str{\"o}mberg, \emph{Weil representations associated with finite
  quadratic modules}, Math.~Z.\ \textbf{275} (2013), no.~1-2, 509--527.
  \MR{3101818}

\bibitem{MR894516}
J.~Sturm, \emph{On the congruence of modular forms}, Number theory ({N}ew
  {Y}ork, 1984--1985), Lecture Notes in Math., vol.~1240, Springer, Berlin,
  1987, pp.~275--280. \MR{894516 (88h:11031)}

\bibitem{PARI2}
{The PARI~Group}, Univ.\ Bordeaux, \emph{{PARI/GP version \texttt{2.11.2}}},
  2019, available from \url{http://pari.math.u-bordeaux.fr/}.

\bibitem{MR0399193}
J.~Thompson, \emph{A conjugacy theorem for {$E_{8}$}}, J.~Algebra
  \textbf{38} (1976), no.~2, 525--530. \MR{0399193}

\bibitem{MR399257}
\bysame, \emph{Finite groups and even lattices}, J.~Algebra \textbf{38} (1976),
  no.~2, 523--524. \MR{399257}

\bibitem{Tho_FinGpsModFns}
\bysame, \emph{Finite groups and modular functions}, Bull.\ London Math.\ Soc.\ \textbf{11} (1979), no.~3, 347--351. \MR{MR554401 (81j:20029)}

\bibitem{Tho_NmrlgyMonsEllModFn}
\bysame, \emph{Some numerology between the {F}ischer-{G}riess {M}onster and the
  elliptic modular function}, Bull.\ London Math.\ Soc.\ \textbf{11} (1979),
  no.~3, 352--353. \MR{MR554402 (81j:20030)}


\bibitem{MR1432536}
P.~Tiep, \emph{Globally irreducible representations of finite groups and
  integral lattices}, Geom.\ Dedicata \textbf{64} (1997), no.~1, 85--123.
  \MR{1432536}

\bibitem{MR1732353}
\bysame, \emph{Globally irreducible lattices}, Integral quadratic forms
  and lattices ({S}eoul, 1998), Contemp.\ Math., vol.~249, Amer.\ Math.\ Soc.,
  Providence, RI, 1999, pp.~97--111. \MR{1732353}



\bibitem{MR164968}
J.~Tits, \emph{Algebraic and abstract simple groups}, Ann.\ of Math.\ (2)
  \textbf{80} (1964), 313--329. \MR{164968}

\bibitem{MR1165184}
M.~Tuite, \emph{Monstrous {M}oonshine from orbifolds}, Comm.\ Math.\ Phys.\ \textbf{146} (1992), no.~2, 277--309. \MR{1165184}

\bibitem{MR1312433}
\bysame, \emph{On the relationship between {M}onstrous {M}oonshine and the
  uniqueness of the {M}oonshine module}, Comm.\ Math.\ Phys.\ \textbf{166} (1995),
  no.~3, 495--532. \MR{1312433}

\bibitem{MR558941}
B.~Venkov, \emph{On the classification of integral even unimodular
  {$24$}-dimensional quadratic forms}, Trudy Mat.\ Inst.\ Steklov.\ \textbf{148}
  (1978), 65--76, 273, Algebra, number theory and their applications.
  \MR{558941 (81d:10024)}

\bibitem{2014arXiv1406.0571W}
D.~{Whalen}, \emph{{Vector-Valued Rademacher Sums and Automorphic Integrals}}, arXiv:1406.0571.

\bibitem{MR2997018}
R.~Wilson, \emph{On a 14-dimensional lattice invariant under the simple
  group {$G_2(3)$}}, J.~Group Theory \textbf{15} (2012), no.~6, 709--716.
  \MR{2997018}

\bibitem{MR3100781}
\bysame, \emph{On the simple groups of {S}uzuki and {R}ee}, Proc.\ Lond.\ Math.\ Soc.\ (3) \textbf{107} (2013), no.~3, 680--712. \MR{3100781}

\bibitem{Zag_TrcSngMdl}
D.~Zagier, \emph{Traces of singular moduli}, Motives, polylogarithms and
  {H}odge theory, {P}art {I} ({I}rvine, {CA}, 1998), Int.\ Press Lect.\ Ser.,
  vol.~3, Int.\ Press, Somerville, MA, 2002, pp.~211--244. \MR{1977587
  (2004h:11037)}

\bibitem{MR2605321}
\bysame, \emph{Ramanujan's mock theta functions and their applications (after
  {Z}wegers and {O}no-{B}ringmann)}, Ast\'{e}risque (2009), no.~326, Exp.\ No.
  986, vii--viii, 143--164 (2010), S\'{e}minaire Bourbaki. Vol.\ 2007/2008.
  \MR{2605321}

\bibitem{Zhu_ModInv}
Y.~Zhu, \emph{Modular invariance of characters of vertex operator
  algebras}, Journal of the American Mathematical Society \textbf{9} (1996),
  no.~1, 237--302.

\end{thebibliography}
\end{document}